\theoremstyle{plain}
\newtheorem{theorem}{Theorem}[section]
\newtheorem{thm}[theorem]{Theorem}
\newtheorem{cor}[theorem]{Corollary}
\newtheorem{prop}[theorem]{Proposition}
\newtheorem{lem}[theorem]{Lemma}
\newtheorem{defi}[theorem]{Definition}
\newtheorem{example}[theorem]{Example}
\newcommand{\Glie}{\mathfrak{g}}
\newcommand{\Yim}{\mathcal{Y}}
\newcommand{\U}{\mathcal{U}}
\newcommand{\ZZ}{\mathbb{Z}}
\newcommand{\CC}{\mathbb{C}}
\newcommand{\C}{\mathbb{C}}
\newcommand{\Z}{\mathbb{Z}}
\newcommand{\g}{\mathfrak{g}}
\newcommand{\Psib}{\mbox{\boldmath$\Psi$}}
\newcommand{\nc}{\newcommand}
\nc{\on}{\operatorname}
\nc{\la}{\lambda}
\nc{\wh}{\widehat}
\nc{\wt}{\widetilde}
\nc{\sw}{{\mathfrak s}{\mathfrak l}}
\nc{\ghat}{\wh{\g}}
\nc{\hhat}{\wh{\h}}
\nc{\mc}{\mathcal}
\nc{\bi}{\bibitem}
\nc{\pa}{\partial}
\nc{\ppart}{(\!(t)\!)}
\nc{\pparl}{(\!(\la)\!)}
\nc{\zpart}{(\!(z^{-1})\!)}
\nc{\n}{{\mathfrak n}}
\nc{\ol}{\overline}
\nc{\mb}{\mathbf}
\nc{\bb}{{\mathfrak b}}
\nc{\su}{\wh\sw_2}
\nc{\h}{{\mathfrak h}}
\nc{\can}{\on{can}}
\nc{\ntil}{\wt{\n}}
\nc{\pone}{{\mathbb P}^1}
\nc{\bs}{\backslash}
\nc{\al}{\alpha}
\nc{\gt}{{\mathfrak g}'}
\nc{\ds}{\displaystyle}
\nc{\Sig}{{\mb \Sigma}}
\theoremstyle{definition}
\newtheorem{rem}[theorem]{Remark}
\begin{document}

\begin{title}
{Weyl group symmetry of $q$-characters}
\end{title}

\author{Edward Frenkel}

\address{Department of Mathematics, University of California,
  Berkeley, CA 94720, USA}

\author[David Hernandez]{David Hernandez}

\address{Universit\'e Paris Cit\'e and Sorbonne Universit\'e, CNRS,
  IMJ-PRG, IUF, F-75006, Paris, France}

\begin{abstract} 
 We define an action of the Weyl group $W$ of a simple Lie algebra
 $\g$ on a completion of the ring ${\mc Y}$, which is the codomain of
 the $q$-character homomorphism of the corresponding quantum affine
 algebra $U_q(\ghat)$. We prove that the subring of $W$-invariants
 of ${\mc Y}$ is precisely the ring of $q$-characters,
 which is isomorphic to the Grothendieck ring of the category of
 finite-dimensional representations of $U_q(\ghat)$. This resolves an
 old puzzle in the theory of $q$-characters. We also identify the
 screening operators, which were previously used to describe the ring
 of $q$-characters, as the subleading terms of simple reflections from
 $W$ in a certain limit. Our results have already found applications
 to the study of the category ${\mc O}$ of representations of the
 Borel subalgebra of $U_q(\ghat)$ in \cite{FH3} and to the
 categorification of cluster algebras in \cite{GHL}.
\end{abstract}

\maketitle

\tableofcontents

\section{Introduction}

Let $\g$ be a simple Lie algebra over $\C$ of rank $n$ and $G$ the
corresponding simply-connected Lie group. Let $\on{Rep} G$ be the
Grothendieck ring of finite-dimensional representations of $G$
(equivalently, its Lie algebra $\g$). Denote by $T$ the Cartan
subgroup of $G$. Attaching to each finite-dimensional representation
$V$ of $G$ its character, i.e. the function $\chi_V: T \to \C$ defined
by $\chi_V(t) = \on{Tr}_V(t), t \in T$, we obtain an injective
homomorphism of commutative algebras $$\chi: \on{Rep} G \to \Z[T]
\simeq \Z[y_i^{\pm 1}]_{i \in I}.$$ Here $I$ denotes the set
$\{1,2,\ldots,n \}$ and $y_i$ is the $i$th fundamental
weight. Together, these fundamental weights generate the lattice of
homomorphisms $T \to \C^\times$. Moreover, it is well-known that the
image of $\chi$ is isomorphic to the subring of invariants of
$\Z[y_i^{\pm 1}]_{i \in I}$ under the action of the {\em Weyl group}
$W$ of $G$. This group is generated by the simple reflections $s_i$
acting on the fundamental weights by the formula
\begin{equation}    \label{si}
  s_i \cdot y_j := y_j a_i^{-\delta_{ij}},
\end{equation}
where $a_i$ is the monomial corresponding to the $i$th simple root,
\begin{equation}    \label{ai}
  a_i := \prod_{j \in I} y_j^{C_{ji}},
\end{equation}
$C=(C_{ij})$ is the Cartan matrix of $\g$, and $\delta_{ij}$ is the
Kronecker delta.

In \cite{Fre}, N. Reshetikhin and one of the authors introduced the
notion of {\em $q$-characters}. These are the analogues of characters
for finite-dimensional representations of the level 0
  quotient $\U_q(\ghat)$ of the quantum affine algebra associated to
the affine Kac--Moody algebra $\ghat$ which is the affinization of
$\g$. We will assume throughout this paper that $q \in \C^\times$ is not
a root of unity. The role of $\Z[y_i^{\pm 1}]_{i \in I}$ is now played
by the ring of Laurent polynomials
\begin{equation}    \label{YY}
  \Yim := \ZZ[Y_{i,a}^{\pm 1}]_{i\in I,a\in\CC^\times}
\end{equation}
and the role of $\chi$ is played by the injective $q$-character
homomorphism
\begin{equation}    \label{chiq}
\chi_q: \text{Rep}(\U_q(\ghat)) \rightarrow \Yim,
\end{equation}
where $\text{Rep}(\U_q(\ghat))$ is the Grothendieck ring of the
category of finite-dimensional (type 1) representations
of $\U_q(\ghat)$ (see Section \ref{screenings}) and
$$
\chi_q(V) := \sum_m \text{dim}(V_m) \cdot m.
$$
Here the sum is over the set of all monomials $m$ in $Y_{i,a}^{\pm
  1}$, which are in one-to-one correspondence with the set of
$\ell$-weights of $\U_q(\ghat)$, and $V_m$ is the corresponding
$\ell$-weight subspace of $V$. We will call the image of $\chi_q$ the
    {\em ring of $q$-characters}. It follows from the injectivity of
    $\chi_q$ that it is isomorphic to
    $\text{Rep}(\U_q(\ghat))$.

The theory of $q$-characters has proved to be very useful in
representation theory of quantum affine algebras, and it also plays an
important role in other areas, such quantum integrable models, quiver
varieties, and cluster algebras.

It is natural to ask whether there is an action of the Weyl group $W$
on $\Yim$, so that the subring of $W$-invariants is equal to the ring
of $q$-characters. In fact, natural $q$-analogues of the monomials
$a_i$ given by formula \eqref{ai} are known \cite{Fre}; these are the
monomials
\begin{multline}    \label{Ai}
  A_{i,a} := \\ Y_{i,aq_i^{-1}}Y_{i,aq_i}
\left( \prod_{\{j\in I \mid C_{j,i} = -1\}}Y_{j,a}
\prod_{\{j\in I \mid C_{j,i} = -2\}}Y_{j,aq^{-1}} Y_{j,aq}
\prod_{\{j\in I \mid C_{j,i} =
-3\}}Y_{j,aq^{-2}} Y_{j,a} Y_{j,aq^2} \right)^{-1},
\end{multline}
where $q_i = q^{d_i}$ and the $d_i$'s are the relatively prime
positive integers such that the matrix $\on{diag}[d_1,\ldots,d_n]
\cdot C$ is symmetric. Using this formula,
  V. Chari \cite{C} defined $q$-analogues of the simple reflections
  $s_i$ given by formula \eqref{si} as the automorphisms of $\Yim$
  acting by (up to replacing $q$ by $q^{-1}$)
\begin{equation}    \label{Ti}
  T_i \cdot Y_{j,a} := Y_{j,a}A_{i,aq_i^{-1}}^{-\delta_{i,j}}.
\end{equation}
(closely related operators were also defined in \cite{BP} in the
framework of deformed ${\mc W}$-algebras). However, as was shown
in \cite{C}, the automorphisms $T_i$ generate the braid group
associated to $\g$, {\em not} the Weyl group (in fact, each $T_i$ has
infinite order, whereas the simple reflections $s_i \in W$ have order
$2$). Furthermore, it is easy to see that the subring of
$T_i$-invariants of $\Yim$ is equal to $\Z$ (i.e. consists of the
constant elements).

On the other hand, in \cite{Fre} another collection of operators,
$S_i, i \in I$, called the {\em screening operators}, was
introduced. This was motivated by the close relation between the ring
of $q$-characters and the deformed ${\mc W}$-algebra associated to
$\g$. It was conjectured in \cite{Fre} and proved in \cite{Fre2} that
the ring of $q$-characters coincides with the subring of invariants of
the screening operators, so in a sense, they may be viewed as a
replacement for the action of the simple reflections $s_i, i \in I$,
and hence of the Weyl group, on $\Z[y_i^{\pm 1}]_{i \in I}$.

However, unlike simple reflection $s_i$, which are automorphisms of
$\Z[y_i^{\pm 1}]_{i \in I}$, each screening operator $S_i$ is a {\em
  derivation} acting from $\Yim$ to a free module over $\Yim$, and
this creates a puzzling disparity between the theories of characters
and $q$-characters. Nonetheless, it was generally believed that this
is the best one can do.

In the present paper, we resolve this puzzle and introduce a genuine
action of the Weyl group $W$ such that the ring of $q$-characters gets
identified with the subring of $W$-invariants in $\Yim$. Moreover, we
show that the screening operator $S_i$ naturally arises as a
subleading term in a certain limit of a natural one-parameter
deformation of (a component of) the automorphism $\Theta_i, i \in I$,
corresponding to the action of the simple reflection $s_i \in W$.
This explains why $S_i$ is a derivation. In
addition, we show that Chari's automorphisms $T_i$ generating the
braid group also appear as the leading terms of the $\Theta_i$'s
in a different sense.

An important aspect of our construction is that the automorphisms
$\Theta_i$ involve infinite series. Hence they do not
  preserve $\Yim$ but rather map elements of $\Yim$ to a
  direct sum $\Pi$ of its completions. Our first main result is that
these automorphisms extend to well-defined automorphisms of the ring
$\Pi$ where they generate an {\em action of the Weyl group} $W$. Our
second main result is that under this action, {\em the subring of
  $W$-invariants in $\Yim$ is equal to} $\text{Rep}(\U_q(\ghat))$
(i.e. the ring of $q$-characters of $\U_q(\ghat)$).

Explicitly, the automorphism $\Theta_i$ is given by the formulas $\Theta_i
\cdot Y_{j,a}^{\pm 1} := Y_{j,a}^{\pm 1}$, if $j\neq i$, and
\begin{equation}    \label{Thetai}
  \Theta_i \cdot Y_{i,a} := Y_{i,a} A_{i,aq_i^{-1}}^{-1} \;
  \frac{\Sigma_{i,aq_i^{-3}}}{\Sigma_{i,aq_i^{-1}}},
\end{equation}
where $\Sigma_{i,a}$ is a unique solution of the
$q$-difference equation
\begin{equation}\label{firstqe}
\Sigma_{i,a} = 1 + A_{i,a}^{-1}
\Sigma_{i,aq_i^{-2}}
\end{equation}
in the above completion $\Pi$. Note that in the limit $q
  \to 1$, the numerator and the denominator in formula \eqref{Thetai}
  cancel out and \eqref{Thetai} becomes \eqref{si}.

The element $\Sigma_{i,a}$ can be expanded as a formal series in two
ways. The first expansion is
\begin{equation}    \label{Sigmai}
  \sum_{k\geq 0} \; \prod_{0 < j \leq k}
  A_{i,aq_i^{-2j+2}}^{-1} = 1 + A_{i,a}^{-1} (1 + A_{i,aq_i^{-2}}^{-1}(1 +
  \ldots )),
\end{equation}
where the $k=0$ term in the summation is defined to be $1$. The second
expansion is
\begin{equation}    \label{Sigmaibis}
  - \sum_{k > 0} \; \prod_{0 < j \leq k}
  A_{i,aq_i^{2j}} = - A_{i,aq_i^2} (1 + A_{i,aq_i^4}(1 +
  \ldots )).
\end{equation}
Considering simultaneously different expansions of solutions of
$q$-difference equations such as \eqref{firstqe} is a crucial
ingredient of our construction of the Weyl group action (see
Definition \ref{exsigor} and Section \ref{Weylaction}).

There is an analogous phenomenon in the case of the ring of characters
of finite-dimensional representations of $\g$. Namely, suppose we want
to adjoin to this ring the characters of infinite-dimensional
representations of $\g$ from the category ${\mc O}$. Recall that the
objects of this category have finite-dimensional weight subspaces,
whose weights belong to a subset of the form
\begin{equation}    \label{charO}
\bigcup_{j = 1,\ldots,N} \{ \lambda_j - \sum_{i
  \in I} n_i \al_i \mid n_i \geq 0 \}.
\end{equation}
Hence, the characters of representations from the category ${\mc O}$
belong to a completion of the ring $\Z[y_i^{\pm 1}]_{i \in I}$. But
the problem is that the action of the Weyl group $W$ on $\Z[y_i^{\pm
    1}]_{i \in I}$ does not extend to this completion. However, we can
rectify this situation by introducing a similar category ${\mc O}_w$
for each element $w \in W$. Its objects have non-zero weight subspaces
only for the weights from a subset whose image under $w$ is of the
form \eqref{charO}. The characters of representations from the
category ${\mc O}_w$ belong to another completion of $\Z[y_i^{\pm
    1}]_{i \in I}$, and it is easy to see that the action of $W$ on
$\Z[y_i^{\pm 1}]_{i \in I}$ naturally extends to the direct sum $\pi$
of these completions for all $w \in W$.

In the limit $q \to 1$ our direct sum $\Pi$ of
  completions of $\Yim$ becomes $\pi$, and the automorphisms
$\Theta_i$ reduce to the simple reflections $s_i \in W$ acting on
$\pi$. Moreover, just like $\pi$, our $\Pi$ has an
interpretation in terms of characters of representations from an
analogue of the category ${\mc O}$ for the quantum affine algebra
$\U_q(\ghat)$ (and its twists by $w \in W$).

More precisely, this is the category ${\mc O}$ of representations of
the subalgebra $\U_q(\wh{\mathfrak{b}}) \subset \U_q(\ghat)$, where
$\wh{\mathfrak{b}}$ is a Borel subalgebra of $\ghat$, which was
introduced by M. Jimbo and one of the authors in \cite{HJ}. It
contains all finite-dimensional representations of $\U_q(\ghat)$ as
well as many infinite-dimensional representations whose ordinary
weight subspaces are finite-dimensional and their weights belong to a
subset of the form \eqref{charO}. The $q$-character homomorphism
$\chi_q$ given by formula \eqref{chiq} can be extended from
$\text{Rep}(\U_q(\ghat))$ to the Grothendieck ring $K_0({\mc O})$ of
this category if we also enlarge its target, $\Yim$.

Our definition of the completion $\Pi$, explicit formulas
\eqref{Thetai} for the automorphisms $\Theta_i$, and the statement
that these automorphisms generate the Weyl group $W$ were motivated by
our investigation of some novel relations in $K_0({\mc O})$
(which can also be viewed as relations between $q$-characters of
representations from the category ${\mc O}$). These are the
  extensions of the generalized Baxter $TQ$-relations and the
  $QQ$-system established in our earlier works \cite{FH} and
  \cite{FH2}, respectively. (We note that this extended $QQ$-system
  was introduced and studied in \cite{MRV1,MRV2} in the context of
  affine opers. Its Yangian version for simply-laced $\g$ was
  introduced in \cite{MV1,MV2}, and the corresponding extended version
  was studied in \cite{ESV,EV}.) Both the extended $TQ$-relations and
  the extended $QQ$-systems are labeled by elements of the Weyl group
  $W$. We have conjectured them (and proved in some cases) in our
  recent work \cite{FH3}, in which we have used the Weyl group action
  defined in the present paper as a guiding principle.

In addition, our Weyl action plays an important role in the definition given
in \cite{GHL} of a new cluster algebra structure on the Grothendieck
rings of representations of the so-called shifted quantum affine
algebras. This is another application of the results of the present
paper.

After we obtained the results of the present paper, we learned of the
paper by R. Inoue \cite{In} in which operators similar to one of the
components of our $\Theta_i$ were constructed. Inoue showed that in
the case that $q$ is a {\em root of unity}, they satisfy the relations
of the Weyl group $W$ and that the ring of $q$-characters is contained
in the subring of $W$-invariants (see also \cite{IY}). For generic
$q$, in Remark 4.15 of \cite{In}, Inoue asked whether these operators
could in some sense satisfy the relations of the Weyl group and
whether there is any connection between them and the screening
operators. In the present paper we give affirmative answers to both
questions, and we also prove that the subring of $W$-invariant
elements of $\Yim$ is exactly the ring of $q$-characters (we note that
$\Pi$, the direct sum of completions of $\Yim$ discussed above which
is crucial in our definition of the Weyl group action, does not appear
in \cite{In}).

\medskip

The paper is organized as follows. In Section \ref{cp} we introduce
completions of the ring $\Yim$ labeled by $w \in W$ which we will need
in order to define the Weyl group action. In Section \ref{Weylaction}
we introduce automorphisms $\Theta_i$ on the direct sum $\Pi$ of these
completions. Our first main result, Theorem \ref{weylaction}, is that
these operators generate an action of the Weyl group $W$ on $\Pi$. The
proof consists of demonstrating the relations $\Theta_i^2 = \on{Id}$
(this is Proposition \ref{invol}) and the braid group relations, which
are proved in Section \ref{braidrel} by reducing the statement to the
case of rank 2 simple Lie algebras (see Theorem
  \ref{weylaction1}). Our proof for rank 2 Lie algebras given in
  Section \ref{gtb} relies on a combinatorial study of the
  $TQ$-relations for those Lie algebras (established in \cite{FH}) at the
  level of $q$-characters; note that we do not use any information about
  representation theory. In Section \ref{sltb}, we also give
  a more direct proof for simply-laced $\g$ that does not use the
  $TQ$-relations. In Section
\ref{screenings} we prove our second main result, Theorem
\ref{invariants}, that the subring of $W$-invariants in $\Yim \subset
\Pi$ is equal to the subring $\text{Rep}(\U_q(\ghat))$ of
$q$-characters of $\U_q(\ghat)$. In order to do that, we relate the
operators $\Theta_i$ to the screening operators. In Section
\ref{othsym} we study the relations between our $W$-action and other
known symmetries. In particular, we introduce a $q$-analogue of the
ring of rational functions equipped with an action of $W$ that
naturally appears in representation theory of $\g$.  In Section
\ref{altpf}, we present 
explicit formulas for some elements of $\Pi$ that can be used to
give an alternative, purely combinatorial, proof of the braid
relations.

\medskip

{\bf Acknowledgments.} The authors were partially supported by a grant
from the France-Berkeley Fund of UC Berkeley. The second author would
like to thank the Department of Mathematics at UC Berkeley for
hospitality during his visit in the Spring of 2022. We also thank the
referee for careful reading of the paper and helpful comments.

\section{Completions}\label{cp}

In this section we introduce various completions of ring $\Yim$
introduced in equation \eqref{YY} which we will need in order to
define the Weyl group action.

\subsection{Lie algebra and Weyl group}    \label{Liealg}

Let $\Glie$ be a finite-dimensional simple Lie algebra
  of rank $n$. We will use the notation and conventions
    of \cite{ka}.
We denote $C = (C_{i,j})_{i,j\in I}$ its Cartan matrix where 
$I=\{1,\ldots, n\}$. Let
$\{\alpha_i\}_{i\in I}$, $\{\alpha_i^\vee\}_{i\in I}$,
$\{\omega_i\}_{i\in I}$, $\{\omega_i^\vee\}_{i\in I}$, and
${\mathfrak{h}}$ be the simple roots, the simple coroots, the
fundamental weights, the fundamental coweights, and a fixed Cartan
subalgebra of $\Glie$, respectively.  We set $Q=\oplus_{i\in
  I}\Z\alpha_i$, $Q^+=\oplus_{i\in I}\Z_{\ge0}\alpha_i$,
$P=\oplus_{i\in I}\Z\omega_i$.  We have the set of roots
$\Delta\subset Q$, which is decomposed as $\Delta = \Delta_+ \sqcup
\Delta_-$ where $\Delta_\pm = \pm (\Delta\cap Q^+)$.  Let
$D=\mathrm{diag}(d_1,\ldots,d_n)$ be the unique diagonal matrix such
that $B=DC$ is symmetric and $d_i$'s are relatively prime positive
integers.  We have the partial ordering on $P$ defined by $\omega\leq
\omega'$ if and only if $\omega'-\omega\in Q^+$. Let $W$ be the Weyl group of
$\mathfrak{g}$, generated by simple reflections $s_i$ ($i\in I$).

Throughout this paper, we fix a non-zero complex number $q$ which is
not a root of unity. We set $q_i=q^{d_i}$. 

\subsection{Laurent polynomials}    \label{Laurent}

Following \cite{Fre}, consider the ring $\Yim$ of Laurent polynomials
in variables in $Y_{i,a}, i\in I,a\in\CC^\times$, over $\Z$ (see
formula \eqref{YY}). Let $\mathcal{M}$ be the multiplicative group of
monomials in $\Yim$. We have the group homomorphism $\omega :
\mathcal{M}\rightarrow P$ which assigns to $m = \prod_{i\in I,
  a\in\CC^\times}Y_{i,a}^{u_{i,a}(m)}$ in $\mathcal{M}$ its
$\g$-weight
$$\omega(m) := \sum_{i\in I, a\in\CC^\times} u_{i,a}(m) \omega_i\in P.$$ 
For example, for $i\in I, a\in\CC^\times$, $\omega(Y_{i,a}) =
\omega_i$. We define $A_{i,a}\in\mathcal{M}$ by formula \eqref{Ai}.
Its weight is $\omega(A_{i,a}) = \alpha_i$.

\subsection{Completions by formal series}    \label{compl}

\begin{defi}    \label{compw}
For $w\in W$, we define the free $\Z$-module
$\wt{\mathcal{Y}}^w$ consisting of the formal series
\begin{equation}    \label{dec}
  \sum_{m\in S} a_m \cdot m, \qquad a_m \in \Z,
\end{equation}
where $S$ is any subset of the set ${\mc M}$ of monomials in $\Yim$
such that for any $m \in S$ we have
\begin{equation}    \label{omm}
w \cdot \omega(m) \; \in \; \bigcup_{j = 1,\ldots,N} \{ \lambda_j -
\sum_{i \in I} n_i \al_i \mid n_i \geq 0 \}
\end{equation}
for some integral weights $\la_j, j=1,\ldots,N$, and for any $\omega
\in P$, we have
\begin{equation}
|\{m\in S \mid \omega(m) = \omega\text{ and } a_m\neq 0\}|
< \infty.
\end{equation}
\end{defi}

The following result is straightforward.

\begin{lem}
$\wt{\mathcal{Y}}^w$ is a complete topological
ring with respect to the natural topology induced by the partial
ordering on the sets of weights of the form \eqref{omm}, and it is a
completion of the ring $\Yim$ with respect to this topology.
\end{lem}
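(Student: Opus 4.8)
The plan is to verify the two assertions of the lemma separately: first that $\wt{\mathcal{Y}}^w$ is closed under the ring operations inherited from $\Yim$ (so that it really is a ring), and second that it is the completion of $\Yim$ with respect to the topology defined by the weight filtration. For the ring structure, the only nontrivial point is closure under multiplication of two series of the form \eqref{dec}. Given $f = \sum_{m \in S} a_m m$ and $g = \sum_{m' \in S'} b_{m'} m'$ with $S, S'$ satisfying \eqref{omm} for weight data $(\lambda_j)$ and $(\mu_k)$ respectively, the product $fg = \sum_{m'' } c_{m''} m''$ is supported on the set $S'' \subseteq S \cdot S'$ of products. For closure I would check: (a) the support condition \eqref{omm} — since $\omega$ is a group homomorphism, $\omega(mm') = \omega(m) + \omega(m')$, and $w$ acts linearly, so $w\cdot\omega(mm')$ lies in $\bigcup_{j,k}\{\lambda_j + \mu_k - \sum_i n_i\alpha_i \mid n_i \ge 0\}$, which is again a finite union of cones of the required shape; (b) the local finiteness condition — for a fixed $\omega \in P$, the pairs $(m, m') \in S \times S'$ with $\omega(m) + \omega(m') = \omega$ and $a_m, b_{m'} \ne 0$ form a finite set, because by \eqref{omm} and local finiteness for $S$ and $S'$ individually, the weight $w\cdot\omega(m)$ is bounded above (in the partial order) within a finitely generated cone while $w\cdot\omega(m') = \omega(\omega) - w\cdot\omega(m)$ is bounded below, pinning $\omega(m)$ to finitely many values, each with finitely many monomials; hence $c_{m''} = \sum_{mm' = m''} a_m b_{m'}$ is a finite sum and only finitely many $m''$ of each weight occur. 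Closure under addition is immediate since the union of two sets of the form \eqref{omm} is again of that form.

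Next I would make the topology precise. The natural topology is defined by declaring, for each weight $\mu$ (or each finite down-set of weights), the subgroup of series supported on monomials $m$ with $w\cdot\omega(m) \not\ge \mu$ — equivalently supported outside a prescribed finitely generated up-set — to be open; equivalently one truncates a series by discarding all terms whose weight lies above a given cutoff. A net of partial sums converges to the full series \eqref{dec} in this topology precisely because of the local finiteness condition: each "weight slice" stabilizes after finitely many terms. I would then observe that $\Yim$ sits inside $\wt{\mathcal{Y}}^w$ as the subring of series with finite support, and that it is dense: any series \eqref{dec} is the limit of its finite truncations.

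The remaining point is completeness: every Cauchy net in $\wt{\mathcal{Y}}^w$ converges. Here I would argue slice by slice. Because the weight sets allowed by \eqref{omm} are contained in finitely many cones $\lambda_j - Q^+$, for each weight $\mu \in P$ there are only finitely many monomials of weight $\mu$ appearing with possibly nonzero coefficient across the whole module (this is exactly the local finiteness clause), so the "coefficient at weight $\mu$" is a well-defined projection onto a finitely generated free $\Z$-module. A Cauchy net is eventually constant on each such slice, so its slicewise limits assemble into a formal series; one checks this limit series still satisfies \eqref{omm} (its support is contained in the union of the supports appearing in the net, all inside the same finite union of cones once the net's index is large — in fact \eqref{omm} for a fixed $w$ is a closed condition) and still satisfies local finiteness (inherited slicewise). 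Hence the limit lies in $\wt{\mathcal{Y}}^w$, proving completeness.

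I do not expect any genuine obstacle here; the lemma is asserted to be "straightforward" precisely because every step reduces to the elementary fact that $\omega$ is additive and $w$ is linear, combined with the combinatorics of finite unions of cones $\lambda_j - Q^+$ being stable under translation and intersecting each weight hyperplane in a finite set. The one place to be slightly careful is the multiplication argument (b) above: one must use \eqref{omm} for \emph{both} factors to get a two-sided bound pinning down $\omega(m)$ to a finite set, rather than just a one-sided bound which would not suffice. Everything else is bookkeeping.
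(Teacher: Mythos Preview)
Your proposal is correct and spells out precisely the kind of routine verification the paper has in mind; the paper itself gives no proof beyond the remark that the result is ``straightforward.'' The one genuinely content-bearing step is your point (b) for multiplication, where the two-sided bound on $w\cdot\omega(m)$ coming from the cone conditions on both factors pins the weight to a finite interval, and you have handled this correctly.
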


\begin{rem}
For example, if $w = e$, then $\wt{\mathcal{Y}}^e$ contains 
$$\mathcal{Y} \; \underset{\mathbb{Z}[A_{i,a}^{\pm 1}]_{i\in I,
  a\in\mathbb{C}^\times}}\otimes \ZZ((A_{i,a}^{- 1}))_{i\in I,
  a\in\mathbb{C}^\times}.$$
\qed
\end{rem}
	
Similarly, we define a completion $(\mathbb{Z}[y_i^{\pm
    1}]_{i\in I})^w$ of $\mathbb{Z}[y_i^{\pm 1}]_{i\in
  I}$. Namely, for each $\al = \sum_{i \in I} c_i \al_i \in \Delta$ set
\begin{equation}    \label{aal}
a_\al := \prod_{i \in I} a_i^{c_i}.
\end{equation}
Then
$$
(\mathbb{Z}[y_i^{\pm
    1}]_{i\in I})^w := \mathbb{Z}[y_i^{\pm 1}]_{i\in I}
\underset{\Z[a_{w^{-1}(\alpha_i)}^{\pm 1}]_{i \in I}}\otimes
    \Z(\!(a^{-1}_{w^{-1}(\alpha_i)})\!)_{i\in I}.
$$

Let us set the weight of a monomial $\prod_{i\in I}y_i^{u_i}$ 
to be $\sum_{i\in I}u_i \omega_i$. The assignment $\varpi_{w}(Y_{i,a}) =
y_i$ extends to a ring homomorphism
\begin{equation}    \label{varpiw}
\varpi_{w} : \wt{\mathcal{Y}}^w \rightarrow (\mathbb{Z}[y_i^{\pm
    1}]_{i\in I})^w.
\end{equation}

\subsection{Uniqueness of solutions of $q$-difference
  equations}

The ring $\wt{\mathcal{Y}}^w$ admits automorphisms $\tau_a$,
$a\in\mathbb{C}^\times$, defined by $\tau_a(Y_{i,b}) = Y_{i,ab}$.  A
family $(U(a))_{a\in\mathbb{C}^\times}$ of elements in
$\wt{\mathcal{Y}}^w$ is said to be {\em $\mathbb{C}^\times$-equivariant}
if
$$U(ba) = \tau_b(U(a))\text{ for any $a,b\in\mathbb{C}^\times$.}$$ 

\begin{defi}    \label{Gw}
Let $\wt{G}^w$ be the subgroup of the group
$(\wt{\mathcal{Y}}^w)^\times$ of invertible elements of
$\wt{\mathcal{Y}}^w$ consisting of elements of the form $A \cdot S$
where $A \in\mathcal{M}$ and $S$ is of the form
$$S = \pm 1 + \sum_{m\in\mathcal{M}; \; w(\omega(m)) <
    0} a_m m, \qquad a_m
\in \Z.$$ 
We have a group homomorphism $\wt{G}^w \rightarrow P$ sending
$A \cdot S$ to the weight $\omega(A)$ of $A$. It will be called the 
weight of $A\cdot S$.
\end{defi}

Note that any $\mathbb{C}^\times$-equivariant family contained in
$\wt{G}^w$ has a well-defined weight. The following can be viewed as a
uniqueness result for solutions of $q$-difference equations in
$\wt{\mathcal{Y}}^w$.

\begin{lem}\label{unique} Suppose that $\chi\in
  \wt{\mathcal{Y}}^w$ is such that $\chi = \Psi
  \tau_{q^{-r}}(\chi)$
for some $r\in\mathbb{Z}$ and $\Psi\in\wt{G}^w$ with weight in $\Delta$. Then
$\chi=0$.
\end{lem}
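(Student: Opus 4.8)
The strategy is to iterate the $q$-difference equation $\chi = \Psi\,\tau_{q^{-r}}(\chi)$ and extract a contradiction from the weight grading unless $\chi = 0$. Write $\Psi = A \cdot S$ with $A \in \mathcal{M}$ and $S = \pm 1 + (\text{lower terms})$ as in Definition \ref{Gw}, and let $\beta = \omega(A) \in \Delta$ be the weight of $\Psi$. Applying the relation $N$ times gives
\begin{equation}    \label{iterated}
\chi = \Psi\, \tau_{q^{-r}}(\Psi)\, \tau_{q^{-2r}}(\Psi) \cdots \tau_{q^{-(N-1)r}}(\Psi)\;\tau_{q^{-Nr}}(\chi).
\end{equation}
Since each $\tau_{q^{-kr}}(\Psi)$ still has weight $\beta$, the product of the first $N$ factors has weight $N\beta$. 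The plan is to look at the behavior of weights in the two cases $\beta \in \Delta_+$ and $\beta \in \Delta_-$ relative to the ordering twisted by $w$, i.e. the ordering that controls membership in $\wt{\mathcal{Y}}^w$ via \eqref{omm}.

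First I would treat the case $w(\beta) < 0$. Suppose $\chi \neq 0$ and pick a monomial $m_0$ occurring in $\chi$ with $a_{m_0} \neq 0$; among all such, choose one whose weight $\omega(m_0)$ is maximal in the $w$-twisted ordering (such a maximal element exists because \eqref{omm} confines $w\cdot\omega(m)$ to a union of downward cones, and in each fixed weight only finitely many monomials appear). Now examine the leading behavior of the right-hand side of \eqref{iterated}: the term of highest $w$-twisted weight in $\Psi\,\tau_{q^{-r}}(\Psi)\cdots$ is $\pm A\,\tau_{q^{-r}}(A)\cdots\tau_{q^{-(N-1)r}}(A)$, a single monomial of weight $N\beta$, and multiplying it by $m_0$ produces a monomial of weight $\omega(m_0) + N\beta$. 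Because $w(N\beta) = N\,w(\beta)$ and $w(\beta) < 0$, as $N \to \infty$ this weight drifts arbitrarily far down. The key point is to show that the coefficient of this extreme monomial cannot be cancelled: any other contribution to that same total weight would have to come from pairing a strictly lower-weight term of the $\Psi$-product with a strictly higher-weight term of $\tau_{q^{-Nr}}(\chi)$ — but $m_0$ was chosen of maximal weight in $\chi$, so no such higher term exists, and lower terms of the $\Psi$-product paired with $m_0$ land in strictly lower weight. Hence the coefficient of the extreme monomial in $\chi$ is $\pm a_{m_0} \neq 0$, forcing $\chi$ to contain a monomial of weight $\omega(m_0) + N\beta$ for every $N$; since these weights are unbounded below in the $w$-ordering while still lying in the prescribed union of cones \eqref{omm}, and each such monomial is genuinely present, this eventually escapes the allowed region (or violates the finiteness condition in a fixed weight after re-examining overlaps), a contradiction.

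For the remaining case $w(\beta) > 0$, I would run the mirror-image argument: rewrite the relation as $\tau_{q^{-r}}(\chi) = \Psi^{-1}\chi$, note $\Psi^{-1} = A^{-1} S^{-1}$ has weight $-\beta$ with $w(-\beta) < 0$, observe $S^{-1}$ is again of the form $\pm 1 + (\text{terms of negative }w\text{-weight})$ (a geometric-series inversion inside the complete ring $\wt{\mathcal{Y}}^w$), and apply the previous paragraph verbatim with $r$ replaced by $-r$ and $\Psi$ by $\Psi^{-1}$. The main obstacle I anticipate is the bookkeeping in the non-cancellation step: one must argue carefully that when the infinitely many iterates are assembled, the extreme-weight coefficient in \eqref{iterated} is exactly $\pm a_{m_0}$ and not accidentally annihilated by infinitely many lower-order interactions — this uses crucially the local finiteness in each weight and the fact that the $w$-ordering has the descending-chain structure built into \eqref{omm}. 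Once that is pinned down, the contradiction with the defining constraint on $\wt{\mathcal{Y}}^w$ is immediate, and we conclude $\chi = 0$.
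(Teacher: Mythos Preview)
Your iteration produces correct information --- the weight-$(\omega(m_0)+N\beta)$ component of $\chi$ is indeed nonzero for every $N$ --- but the contradiction you draw from it does not hold. An element of $\wt{\mathcal{Y}}^w$ is \emph{allowed} to have nonzero components at weights going arbitrarily far down in the $w$-twisted ordering: this is exactly what the completion permits. Concretely, $w(\omega(m_0)+N\beta)=w(\omega(m_0))+Nw(\beta)$, and since $w(\beta)\in -Q^+$ these weights all lie in the downward cone based at $w(\omega(m_0))$, hence inside the region \eqref{omm}. Nor is the per-weight finiteness condition violated, since the weights $\omega(m_0)+N\beta$ are pairwise distinct. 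So neither of your two proposed contradictions (``escapes the allowed region'' or ``violates the finiteness condition in a fixed weight'') actually fires.

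The fix --- and this is what the paper does --- is to drop the iteration entirely and compare top weights once. Suppose $w(\beta)<0$. Every monomial of $\Psi=A\cdot S$ has weight $\mu$ with $w(\mu)\leq w(\beta)<0$. Hence if the maximal weight $\omega(m_0)$ were to occur on the right-hand side $\Psi\,\tau_{q^{-r}}(\chi)$, we would need $\omega(m_0)=\mu+\lambda$ with $\lambda$ a weight of $\chi$ and $w(\mu)<0$, forcing $w(\lambda)>w(\omega(m_0))$, i.e.\ $\lambda>_w\omega(m_0)$ --- contradicting the maximality of $\omega(m_0)$. So $\omega(m_0)$ does not occur on the right at all, while it occurs on the left; that is already the contradiction. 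Your reduction of the case $w(\beta)>0$ to the previous one via $\Psi^{-1}$ is correct and matches the paper.
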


\begin{proof} We give the proof in the case of $\wt{\mathcal{Y}}^e$;
  the case of $\wt{\mathcal{Y}}^w$ for a general element $w$ is
  similar. Suppose $\chi\neq 0$ and consider a monomial in $\chi$ of
  maximal weight $\nu$. If the weight $\alpha$ of $\Psi$ is in
  $\Delta_-$, then $\nu$ does not appear among the weights of the
  monomials in the expansion of $\Psi\tau_{q^{-r}}(\chi)$, which is a
  contradiction. And if $\alpha \in \Delta_+$, then we apply the same
  argument to the equation $(\Psi)^{-1} \chi = \tau_{q^{-r}}(\chi)$.
\end{proof}

\subsection{The ring $\Pi$}    \label{completion}

Consider the ring
\begin{equation}\label{fembw}   
\pi := \bigoplus_{w\in
    W}(\mathbb{Z}[y_i^{\pm 1}]_{i\in I})^w.
\end{equation}
and the diagonal embedding $\mathbb{Z}[y_i^{\pm 1}] \hookrightarrow
\pi$. The action of the simple reflection $s_i, i \in I$, on ${\mathbb
  Z}[y_j^{\pm 1}]_{j \in I}$ given by formula \eqref{si}, naturally
extends to $s_i^w : (\mathbb{Z}[y_j^{\pm 1}]_{j\in I})^w\rightarrow
(\mathbb{Z}[y_j^{\pm 1}]_{j\in I})^{ws_i}$.

Hence we obtain the following.

\begin{lem}    \label{Wfd}
The action of the Weyl group $W$ on ${\mathbb Z}[y_i^{\pm 1}]_{i \in
  I}$ generated by the simple reflections $s_i, i \in I$, naturally
extends to $\pi$ defined in equation \eqref{fembw}, with $s_i$ acting
as $(s_i^w)_{w\in W}$.
\end{lem}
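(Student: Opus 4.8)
The plan is to verify that the operators $(s_i^w)_{w \in W}$ assemble into a genuine action of $W$ on $\pi$, i.e. that each $s_i$ acts as an automorphism of $\pi$ and that these automorphisms satisfy the Coxeter relations of $W$. First I would make precise how $s_i$ acts on $\pi = \bigoplus_{w \in W}(\mathbb{Z}[y_j^{\pm 1}]_{j\in I})^w$: on the summand labeled by $w$, the map $s_i^w : (\mathbb{Z}[y_j^{\pm 1}]_{j\in I})^w \to (\mathbb{Z}[y_j^{\pm 1}]_{j\in I})^{ws_i}$ is the unique continuous extension of the formula \eqref{si}. To see this extension is well-defined, note that the completion $(\mathbb{Z}[y_j^{\pm 1}]_{j\in I})^w$ is obtained by adjoining to $\mathbb{Z}[y_j^{\pm 1}]$ the formal power series in $a_{w^{-1}(\alpha_i)}^{-1}$, $i \in I$. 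Since $s_i$ permutes the roots, sending $\alpha_j$ to $s_i(\alpha_j)$, it maps the monomial $a_\alpha$ to $a_{s_i(\alpha)}$ (this follows from \eqref{ai}, \eqref{aal}, and the fact that $s_i \cdot a_j = a_j a_i^{-C_{ij}}$, which is a direct computation from \eqref{si}). Hence $s_i$ sends the ring $\Z(\!(a^{-1}_{w^{-1}(\alpha_j)})\!)_{j\in I}$ to $\Z(\!(a^{-1}_{s_i w^{-1}(\alpha_j)})\!)_{j\in I} = \Z(\!(a^{-1}_{(ws_i)^{-1}(\alpha_j)})\!)_{j\in I}$, which is exactly the completion direction needed for the target summand labeled $ws_i$. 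This makes $s_i^w$ a well-defined ring isomorphism onto the $ws_i$-summand.

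Next I would observe that taking the direct sum $\bigoplus_{w} s_i^w$ over all $w \in W$ gives an automorphism $\sigma_i$ of $\pi$: it is bijective because $w \mapsto ws_i$ is an involution on $W$, so the inverse of $\sigma_i$ is $\bigoplus_w s_i^{ws_i}$, and one checks $s_i^{ws_i} \circ s_i^w = \mathrm{id}$ on the $w$-summand since $s_i^2 = e$ already holds on $\mathbb{Z}[y_j^{\pm 1}]$ (this is classical) and the extensions are unique and continuous. Then I would check the braid relations: for $i \neq j$ with $s_i s_j$ of order $m_{ij}$ in $W$, the word $\sigma_i \sigma_j \sigma_i \cdots$ ($m_{ij}$ factors) equals $\sigma_j \sigma_i \sigma_j \cdots$. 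On each summand this reduces to the corresponding braid relation for $s_i, s_j$ acting on $\mathbb{Z}[y_k^{\pm 1}]_{k\in I}$, which holds because $W$ acts on $\mathbb{Z}[y_k^{\pm 1}]$ — and the labels transform correctly since the braid relation holds at the level of $W$ itself (both words represent the same element of $W$, so they shuffle the summands identically). The uniqueness of continuous extensions (Lemma-type argument, or simply density of $\mathbb{Z}[y_k^{\pm 1}]$) then forces equality of the extended maps.

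The main obstacle, though a mild one, is bookkeeping the labels: one must check at every stage that the composition $s_i^{w's_j} \circ s_j^{w'} \circ \cdots$ lands in the summand labeled by the correctly reduced word, and that applying the two sides of a braid relation produces the same target summand. This is guaranteed because the target label after applying a word $s_{i_1} s_{i_2} \cdots s_{i_k}$ (read in the appropriate order) to the summand $w$ is $w s_{i_1} s_{i_2} \cdots s_{i_k}$, and braid-equivalent words give the same group element. Once the labels are matched, the identity of the two maps follows from the fact that they agree on the dense subring $\mathbb{Z}[y_k^{\pm 1}]_{k\in I}$ and are both continuous. Finally, since the diagonal embedding $\mathbb{Z}[y_i^{\pm 1}] \hookrightarrow \pi$ is clearly $W$-equivariant (each $\sigma_i$ restricts to $s_i$ on the diagonal), we conclude that $\sigma_i$ generate an action of $W$ on $\pi$ extending the standard one, which is the assertion of the lemma.
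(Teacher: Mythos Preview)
Your proposal is correct and follows the same approach as the paper, which in fact provides no explicit proof at all: the lemma is stated as an immediate consequence of the preceding sentence (``Hence we obtain the following''), since the extension $s_i^w$ is well-defined precisely because $s_i$ sends $a_\alpha$ to $a_{s_i(\alpha)}$ and hence permutes the completion directions appropriately. Your write-up simply spells out the bookkeeping (well-definedness of the extension, matching of summand labels, and that the Coxeter relations hold on the dense subring and propagate by continuity) that the paper leaves to the reader.
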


We will now define a $q$-analogue of $\pi$; namely, the ring
$$
  \Pi :=  \bigoplus_{w\in W} \wt{\mathcal{Y}}^w.
$$
It is equipped with a diagonal embedding $\mathcal{Y} \hookrightarrow
\Pi$ and automorphisms $\tau_a, a\in\mathbb{C}^\times$. For each $w\in
W$, denote by $E_w$ the projection $\Pi\rightarrow
\wt{\mathcal{Y}}^w$. We also denote by $\varpi$ the
  homomorphism $\Pi \to \pi$ which restricts to $\varpi_w$ given by
  formula \eqref{varpiw} on each summand $\wt{\mc Y}^w$ in $\Pi$.

\begin{defi}\label{exsigor}
For $i\in I$ and $a\in\mathbb{C}^\times$, let
\begin{equation}    \label{Sigmai1}
\Sigma_{i,a}^+ :=  \sum_{k\geq 0} \; \prod_{0 < j \leq k}
  A_{i,aq_i^{-2j+2}}^{-1} = 1 + A_{i,a}^{-1} (1 + A_{i,aq_i^{-2}}^{-1}(1 +
  \ldots )),
\end{equation}
where the $k=0$ term in the summation is defined to be $1$, and
\begin{equation}    \label{Sigmaibis1}
\Sigma_{i,a}^- := - \sum_{k > 0} \; \prod_{0 < j \leq k}
  A_{i,aq_i^{2j}} = - A_{i,aq_i^2} (1 + A_{i,aq_i^4}(1 +
  \ldots )).
\end{equation}
Given $w\in W$, we define an element $\Sigma_{i,a}^w$ of
$\wt{\Yim}^w$ as follows: $\Sigma_{i,a}^w = \Sigma_{i,a}^+$ if
$w(\alpha_i)\in \Delta_+$ and $\Sigma_{i,a}^w = \Sigma_{i,a}^-$ if
$w(\alpha_i)\in \Delta_-$. Finally, set
\begin{equation}    \label{totalSigma}
\Sigma_{i,a} := (\Sigma_{i,a}^w)_{w\in W}\in \Pi.
\end{equation}
\end{defi}

The next lemma follows directly from the definition and Lemma
\ref{unique}.
  
\begin{lem}
For each $w \in W$, $\Sigma_{i,a}^w$ is the unique solution in
$\wt{\mc Y}^w$ of the $q$-difference equation
\begin{equation}\label{firstqe1}
\Sigma_{i,a}^w = 1 + A_{i,a}^{-1} \Sigma_{i,aq_i^{-2}}^w.
\end{equation}
In addition, $\Sigma_{i,a}^w$ is invertible in $\wt{\mc Y}^w$ and
$\Sigma_{i,a}$ is invertible in $\Pi$.
\end{lem}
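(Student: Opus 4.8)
The plan is to establish the three assertions — that $\Sigma_{i,a}^w$ solves \eqref{firstqe1}, that it is the unique solution in $\wt{\mc Y}^w$, and that it (hence $\Sigma_{i,a}$) is invertible — one at a time, essentially by unwinding Definitions \ref{compw}, \ref{exsigor} and \ref{Gw} and feeding the result into Lemma \ref{unique}. For the $q$-difference equation, I would first record that the family $(\Sigma_{i,a}^w)_{a\in\mathbb{C}^\times}$ is $\mathbb{C}^\times$-equivariant — since $\tau_c(A_{i,b}) = A_{i,cb}$ — so that $\Sigma_{i,aq_i^{-2}}^w = \tau_{q_i^{-2}}(\Sigma_{i,a}^w)$ and \eqref{firstqe1} reads $\Sigma_{i,a}^w = 1 + A_{i,a}^{-1}\tau_{q_i^{-2}}(\Sigma_{i,a}^w)$. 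Substituting the explicit expansions: if $w(\al_i)\in\Delta^+$, plugging \eqref{Sigmai1} into the right-hand side and regrouping gives $1 + A_{i,a}^{-1}(1 + A_{i,aq_i^{-2}}^{-1}(1 + \cdots)) = \Sigma_{i,a}^+$; if $w(\al_i)\in\Delta^-$, plugging \eqref{Sigmaibis1} in produces a leading term $A_{i,a}^{-1}\cdot(-A_{i,a}) = -1$ which cancels the summand $1$, and the remaining terms reassemble into $\Sigma_{i,a}^-$. Along the way one checks that these series do lie in $\wt{\mc Y}^w$ for the indicated $w$: the monomials of $\Sigma_{i,a}^+$ have $\g$-weights $-k\al_i$ ($k\geq 0$), whose images $-k\,w(\al_i)$ form a set of the type \eqref{omm} exactly when $w(\al_i)\in\Delta^+$; dually for $\Sigma_{i,a}^-$, whose monomials have weights $k\al_i$ ($k\geq 1$); and since each weight carries a single monomial, the finiteness clause of Definition \ref{compw} is automatic.

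For uniqueness, if $\Sigma$ and $\Sigma'$ both satisfy \eqref{firstqe1} in $\wt{\mc Y}^w$, then $\chi := \Sigma - \Sigma'$ satisfies $\chi = A_{i,a}^{-1}\tau_{q_i^{-2}}(\chi) = A_{i,a}^{-1}\tau_{q^{-2d_i}}(\chi)$. Here $\Psi := A_{i,a}^{-1}$ lies in $\mathcal{M}\subset\wt{G}^w$ (written as $A_{i,a}^{-1}\cdot 1$) and has weight $\omega(A_{i,a}^{-1}) = -\al_i\in\Delta$, while $r = 2d_i\neq 0$; so Lemma \ref{unique} applies and forces $\chi = 0$, giving the asserted uniqueness.

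For invertibility, I would exhibit $\Sigma_{i,a}^w$ as an element of the group $\wt{G}^w$ of Definition \ref{Gw}, every element of which is invertible in $\wt{\mc Y}^w$ by construction. If $w(\al_i)\in\Delta^+$, then $\Sigma_{i,a}^+ = 1 + \sum_{k\geq 1}(\cdots)$ with each summand of weight $-k\al_i$, hence of $w$-image $-k\,w(\al_i) < 0$, so $\Sigma_{i,a}^+ = A\cdot S$ with $A = 1$. If $w(\al_i)\in\Delta^-$, I factor out the leading monomial: $\Sigma_{i,a}^- = A_{i,aq_i^2}\cdot\bigl(-1 - A_{i,aq_i^4} - A_{i,aq_i^4}A_{i,aq_i^6} - \cdots\bigr)$, where $A = A_{i,aq_i^2}\in\mathcal{M}$ and the second factor is $-1$ plus a sum of monomials of weights $k\al_i$ ($k\geq 1$), again with negative $w$-image. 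Either way $\Sigma_{i,a}^w\in\wt{G}^w\subset(\wt{\mc Y}^w)^\times$. Finally, since $\Pi = \bigoplus_{w\in W}\wt{\mc Y}^w$ is a finite product of rings, invertibility in $\Pi$ is componentwise, so $\Sigma_{i,a} = (\Sigma_{i,a}^w)_{w\in W}$ is invertible with inverse $((\Sigma_{i,a}^w)^{-1})_{w\in W}$. I do not expect a genuine obstacle here; the only point that needs care is translating the dichotomy $w(\al_i)\in\Delta^\pm$ into the support conditions defining $\wt{\mc Y}^w$ and $\wt{G}^w$, and the hypothesis that $q$ is not a root of unity enters only implicitly, through Lemma \ref{unique}.
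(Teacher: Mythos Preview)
Your proposal is correct and aligns with the paper's approach: the paper itself simply states that the lemma ``follows directly from the definition and Lemma \ref{unique},'' and what you have written is precisely a careful unpacking of that sentence --- verifying that the two explicit series satisfy \eqref{firstqe1} and lie in the right completion, deducing uniqueness from Lemma \ref{unique} applied to the difference with $\Psi = A_{i,a}^{-1}$, and obtaining invertibility by exhibiting $\Sigma_{i,a}^w$ as an element of $\wt{G}^w$.
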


\section{The Weyl group action}    \label{Weylaction}

In this section we introduce automorphisms $\Theta_i$ generating an
action of the Weyl group on $\Pi$ and state our first main result,
Theorem \ref{weylaction}.

Recall from Section \ref{compl} that each $\wt{\mc Y}^w$ is a complete
topological ring, which is a completion of $\Yim$.  Denote
by $\Yim^w$ the image of $\Yim$ in $\wt{\mc Y}^w$. We will use the
same notation for the corresponding image of $\Yim$ in $\Pi$.

\begin{defi}    \label{defTh}
For each $i\in I$ and $w \in W$, we define the ring homomorphism
$$\Theta^w_i : \mathcal{Y}^w \rightarrow \wt{\mc Y}^{ws_i}$$
by $\Theta^w_i(Y_{j,a}^{\pm 1}) := Y_{j,a}^{\pm 1}$ if $j\neq i$ and
\begin{equation}\label{Thetai1}
  \Theta^w_i(Y_{i,a}) := Y_{i,a}A_{i,aq_i^{-1}}^{-1} \;
  \frac{\Sigma^{ws_i}_{i,aq_i^{-3}}}{\Sigma^{ws_i}_{i,aq_i^{-1}}}.
\end{equation}
\end{defi}

\begin{lem}    \label{conthom}
The homomorphism $\Theta^w_i, i \in I$, extends uniquely to a
continuous homomorphism $\wt{\mc Y}^w \rightarrow \wt{\mc Y}^{ws_i}$.
\end{lem}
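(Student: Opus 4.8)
The plan is to build the extension by continuity from the dense subring $\Yim^w \subset \wt{\mc Y}^w$. First I would record what $\Theta_i^w$ does on an arbitrary monomial $m \in \mc M$ with $w\cdot\omega(m)$ satisfying the constraint \eqref{omm}: since $\Theta_i^w$ fixes $Y_{j,a}^{\pm 1}$ for $j \neq i$, and sends $Y_{i,a}^{\pm 1}$ to $Y_{i,a}^{\pm 1} A_{i,aq_i^{-1}}^{\mp 1}(\Sigma^{ws_i}_{i,aq_i^{-3}}/\Sigma^{ws_i}_{i,aq_i^{-1}})^{\pm 1}$, the image $\Theta_i^w(m)$ is a product of $m$ itself with a finite product of factors of the form $A_{i,b}^{\pm 1}\Sigma^{ws_i}_{i,c}\,(\Sigma^{ws_i}_{i,d})^{-1}$. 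Using the previous lemma that $\Sigma^{ws_i}_{i,a}$ is invertible in $\wt{\mc Y}^{ws_i}$, each such factor lies in $\wt{G}^{ws_i}$, hence belongs to $\wt{\mc Y}^{ws_i}$ and has a well-defined $\g$-weight, which is $0$ in the case of the $\Sigma/\Sigma$ ratio and $\alpha_i$ for each $A_{i,b}^{\pm 1}$ contribution (up to sign). So $\Theta_i^w(m)$ is a genuine element of $\wt{\mc Y}^{ws_i}$, and its top-weight monomial is $s_i\cdot\omega(m)$-graded exactly as one expects from \eqref{si} in the classical limit.

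Next I would check that the constraint region is respected, i.e. that $\Theta_i^w$ actually lands in $\wt{\mc Y}^{ws_i}$ and not merely in some larger space of formal series: the point is that if $m$ ranges over monomials with $w\cdot\omega(m)$ in a set of the form \eqref{charO}, then the monomials occurring in $\Theta_i^w(m)$ have $\g$-weights of the form $\omega(m) - \ell\alpha_i$ for $\ell \geq 0$ (coming from the expansion of $\Sigma^{ws_i}$ in negative powers of the relevant $A_{i,b}$, as dictated by whether $ws_i(\alpha_i) = -w(\alpha_i)$ is in $\Delta^+$ or $\Delta^-$), together with the shift by $\pm\alpha_i$ from the explicit $A_{i,aq_i^{-1}}^{-1}$ factors. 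Applying $ws_i$ to these weights: $ws_i\cdot(\omega(m) - \ell\alpha_i) = ws_i\cdot\omega(m) + \ell\, w(\alpha_i)$, and since $s_i\cdot\omega(m)$ differs from $\omega(m)$ by a multiple of $\alpha_i$ while $w(\alpha_i)$ is a fixed root, one checks the whole family of weights still fits inside a finite union of cones of the form \eqref{charO} (after possibly enlarging $N$ and the $\lambda_j$'s). The finiteness condition on weight-graded pieces follows because in each fixed $\g$-weight only finitely many monomials of $\chi \in \wt{\mc Y}^w$ contribute and each produces finitely many monomials in that weight.

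Then, given a general element $\chi = \sum_{m \in S} a_m\, m \in \wt{\mc Y}^w$, I would define $\Theta_i^w(\chi) := \sum_{m \in S} a_m\, \Theta_i^w(m)$ and argue this sum converges in the topology of $\wt{\mc Y}^{ws_i}$: fix a $\g$-weight $\omega$; by the previous paragraph only monomials $m \in S$ with $\omega(m) \geq \omega$ and $\omega(m)$ in the appropriate cones can contribute a monomial of weight $\omega$ to $\Theta_i^w(m)$, and there are only finitely many such $m$ with $a_m \neq 0$ because $S$ satisfies the conditions of Definition \ref{compw}. Hence the coefficient of each weight-$\omega$ monomial in $\Theta_i^w(\chi)$ is a finite sum, so the series is a well-defined element of $\wt{\mc Y}^{ws_i}$, and by construction $\omega \mapsto$ (weight-$\omega$ part) defines a convergent family, so this is the unique continuous extension agreeing with $\Theta_i^w$ on $\Yim^w$. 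Uniqueness is immediate since $\Yim^w$ is dense and the target is Hausdorff. Multiplicativity and additivity of the extension follow from those of $\Theta_i^w$ on $\Yim^w$ together with continuity of multiplication in $\wt{\mc Y}^{ws_i}$.

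The main obstacle is the second step: verifying that the image of an element of $\wt{\mc Y}^w$ really satisfies the cone condition \eqref{omm} for $\wt{\mc Y}^{ws_i}$, because this requires a careful bookkeeping of how $\Sigma^{ws_i}_{i,a}$ expands — in positive versus negative powers of $A_{i,b}$ depending on the sign of $w(\alpha_i)$ — and checking that the shift from $w$ to $ws_i$ in the labeling is exactly what makes the infinite tails point in the ``right'' direction so they stay inside a finite union of translated cones. Everything else (well-definedness on monomials, convergence, uniqueness, ring homomorphism property) is then routine given Lemma \ref{unique} and the structure of $\wt{G}^w$.
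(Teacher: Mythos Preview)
Your approach is essentially the same as the paper's: extend $\Theta_i^w$ from $\Yim^w$ to $\wt{\mc Y}^w$ by continuity after checking that the image of each monomial, and hence of each convergent series, lands in $\wt{\mc Y}^{ws_i}$. The paper organizes the key verification more cleanly by explicitly factoring $\Theta_i^w$ through Chari's operator $T_i$ (formula~\eqref{Ti}): first apply $T_i$, which sends each monomial $m$ to a monomial of weight $s_i(\omega(m))$; since $(ws_i)(s_i(\omega(m))) = w(\omega(m))$, the resulting series automatically satisfies the cone condition for $\wt{\mc Y}^{ws_i}$. Then one multiplies each $T_i(Y_{i,a})$ by the weight-zero factor $\Sigma^{ws_i}_{i,aq_i^{-3}}/\Sigma^{ws_i}_{i,aq_i^{-1}} \in \wt G^{ws_i}$, which manifestly preserves $\wt{\mc Y}^{ws_i}$.

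Your direct bookkeeping has a slip that this factorization avoids: you write that the monomials in $\Theta_i^w(m)$ have weights $\omega(m)-\ell\alpha_i$ with $\ell\geq 0$, but this is only correct when $ws_i(\alpha_i)\in\Delta^+$. When $ws_i(\alpha_i)\in\Delta^-$ (equivalently $w(\alpha_i)\in\Delta^+$), one has $\Sigma^{ws_i}=\Sigma^-$ and the expansion is in \emph{positive} powers of the $A_{i,b}$, so the weights go \emph{up} along $\alpha_i$. Your convergence argument (``only monomials $m$ with $\omega(m)\geq\omega$ contribute'') is then stated for the wrong ordering. The correct uniform statement is that $(ws_i)$ applied to every weight appearing in $\Theta_i^w(m)$ is $\leq w(\omega(m))$; equivalently, the leading monomial of $\Theta_i^w(m)$ in the $ws_i$-ordering is $T_i(m)$. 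Once you phrase it this way---which is exactly what the $T_i$-factorization makes transparent---your convergence and finiteness arguments go through without change.
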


\begin{proof}
We need to show that for any formal series $\chi\in
\wt{\mathcal{Y}}^w$, the series obtained by replacing each monomial
$m$ in $\chi$ by $\Theta^w_i(m)$ is well-defined in $\wt{\mc
  Y}^{ws_i}$. We do this in two steps. First, we replace each variable
$Y_{j,a}$ in each monomial $m$ of $\chi$ with $T_i(Y_{j,a}) =
Y_{j,a}A_{i,aq_i^{-1}}^{-\delta_{i,j}}$ (see formula \eqref{Ti}). As
the result, each monomial $m$ of weight $\omega$ becomes a monomial of
weight $s_i(\omega)$, so according to Definition \ref{compw} we obtain
a well-defined series in $\wt{\mathcal{Y}}^{ws_i}$. Next, we multiply
each $T_i(Y_{i,a})$ obtained this way by the additional factor
$\frac{\Sigma_{i,aq_i^{-3}}^{ws_i}}{\Sigma_{i,aq_i^{-1}}^{ws_i}}$. By
Definition \ref{exsigor}, the result is a well-defined element of
$\wt{\mathcal{Y}}^{ws_i}$.
\end{proof}

We will use the same notation $\Theta^w_i$ for the homomorphism
$\wt{\mc Y}^w \rightarrow \wt{\mc Y}^{ws_i}$. Taking the direct sum
over all $w \in W$, we obtain the following.

\begin{lem}
For each $i \in I$, the homomorphisms
$$
\Theta^w_i: \wt{\mc Y}^w \rightarrow \wt{\mc Y}^{ws_i}, \qquad w \in W,
$$
combine into a continuous homomorphism
$$
\Theta_i: \Pi \to \Pi.
$$
\end{lem}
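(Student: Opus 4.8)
The plan is to glue the maps $\Theta^w_i$ from the previous lemma into a single endomorphism of $\Pi$, using that the Weyl group $W$ of the finite-dimensional simple Lie algebra $\g$ is finite, so that $\Pi = \bigoplus_{w \in W} \wt{\mc Y}^w$ is a \emph{finite} direct sum of rings, carrying the coordinatewise ring structure and the product topology. Writing $E_w : \Pi \to \wt{\mc Y}^w$ for the projection and $\iota_v : \wt{\mc Y}^v \hookrightarrow \Pi$ for the canonical inclusion, I would simply set
$$
\Theta_i \; := \; \sum_{w \in W} \iota_{w s_i} \circ \Theta^w_i \circ E_w ,
$$
a finite sum, hence a well-defined map $\Pi \to \Pi$. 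Equivalently, since $w \mapsto w s_i$ is a bijection of $W$ (indeed an involution, as $s_i^2 = e$), the $w$-component of $\Theta_i(\chi)$ is $\Theta^{w s_i}_i(E_{w s_i}(\chi)) \in \wt{\mc Y}^w$.

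To see that $\Theta_i$ is a ring homomorphism, note that multiplication, addition and the unit in $\Pi$ are all defined componentwise on the summands $\wt{\mc Y}^w$; since each $\Theta^{w s_i}_i$ is a ring homomorphism by Lemma \ref{conthom}, the $w$-component of $\Theta_i(\chi \chi')$ equals $\Theta^{w s_i}_i(E_{w s_i}(\chi))\,\Theta^{w s_i}_i(E_{w s_i}(\chi'))$, which is precisely the $w$-component of $\Theta_i(\chi)\,\Theta_i(\chi')$; additivity and $\Theta_i(1) = 1$ follow in the same way. For continuity, recall that the topology on $\Pi$ is the product of the topologies on the $\wt{\mc Y}^w$; each $\Theta^{w s_i}_i$ is continuous by Lemma \ref{conthom}, and a finite direct sum of maps that are continuous between the respective factors is continuous for the product topologies. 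Hence $\Theta_i : \Pi \to \Pi$ is a continuous ring homomorphism.

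The argument is essentially bookkeeping, and the only thing to be careful about is the index shift: one must track that $\Theta^w_i$ feeds the $w$-summand of $\Pi$ into its $w s_i$-summand, so that the collection $(\Theta^w_i)_{w \in W}$ genuinely reassembles into a \emph{self}-map of $\Pi$ rather than a map to some relabelled copy. The essential input — beyond Lemma \ref{conthom}, which is already granted — is the finiteness of $W$: this is what allows the direct sum over $w$ to be formed with no convergence or well-definedness concerns. There is no real obstacle here.
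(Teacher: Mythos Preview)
Your proof is correct and matches the paper's approach: the paper simply remarks ``Taking the direct sum over all $w \in W$, we obtain the following'' and states the lemma without further argument, treating it as an immediate consequence of Lemma \ref{conthom} together with the finite direct-sum structure of $\Pi$. You have merely spelled out the bookkeeping the paper leaves implicit.
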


The following is the first main result of this paper. Its proof will
be given in Proposition \ref{invol} and in Section \ref{braidrel}.

\begin{thm}    \label{weylaction}
The homomorphisms $\Theta_i, i\in I$, generate an action of the Weyl
group $W$ on $\Pi$.
\end{thm}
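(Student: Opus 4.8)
The plan is to verify the two defining families of relations of the Weyl group: the involution relations $\Theta_i^2 = \mathrm{Id}$ and the braid relations. Since $\Theta_i$ restricts on each summand to a homomorphism $\Theta_i^w : \wt{\mc Y}^w \to \wt{\mc Y}^{ws_i}$, and these are continuous, it suffices throughout to check all identities on the generators $Y_{j,a}^{\pm 1}$; the case $j \neq i$ is trivial, so the content is entirely in what happens to $Y_{i,a}$ (and, for the braid relations, to the finitely many $Y_{j,a}$ with $C_{ij} \neq 0$). I would organize the bookkeeping around the index $w$: applying $\Theta_i$ moves the $w$-summand to the $ws_i$-summand, so $\Theta_i^2$ sends $\wt{\mc Y}^w$ to $\wt{\mc Y}^w$ via the composite $\Theta_i^{ws_i} \circ \Theta_i^w$, and one must track which of $\Sigma^+_{i,a}$ or $\Sigma^-_{i,a}$ is being used at each stage according to whether $w(\alpha_i)$ or $ws_i(\alpha_i) = -w(\alpha_i)$ lies in $\Delta^+$.

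\textbf{Involution.} For $\Theta_i^2 = \mathrm{Id}$ I would compute $\Theta_i^{ws_i}(\Theta_i^w(Y_{i,a}))$ directly from \eqref{Thetai1}. The first factor $Y_{i,a}A_{i,aq_i^{-1}}^{-1}$ is acted on by $\Theta_i^{ws_i}$, which reintroduces another copy of $Y_{i,\bullet}A_{i,\bullet}^{-1}$ times a ratio of $\Sigma$'s; the key point is that $A_{i,a}$ is built only from $Y_{j,\bullet}$ with $C_{ji} \neq 0$, and $\Theta_i$ fixes all $Y_{j,\bullet}$ with $j \neq i$ while sending $Y_{i,\bullet}$ to $Y_{i,\bullet}A_{i,\bullet}^{-1}$ times $\Sigma$-ratios, so one gets a clean recursion. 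Everything should collapse using the $q$-difference equation \eqref{firstqe1}, which after rewriting reads $A_{i,a}^{-1}\Sigma^w_{i,aq_i^{-2}} = \Sigma^w_{i,a} - 1$, equivalently $\Sigma^w_{i,a}/\Sigma^w_{i,aq_i^{-2}} = A_{i,a}^{-1}/(1 - (\Sigma^w_{i,a})^{-1})$; I expect the telescoping of consecutive $\Sigma$-ratios at shifted spectral parameters, combined with the relation $\Sigma^{ws_i}_{i,a} = $ the ``other'' expansion of $\Sigma^w_{i,a}$, to produce exactly the cancellation $Y_{i,a} \mapsto Y_{i,a}$. The crucial structural input is that $\Sigma^+_{i,a}$ and $\Sigma^-_{i,a}$ are two expansions of the \emph{same} rational solution of \eqref{firstqe}, so $A_{i,a}$-monomials appearing with opposite completions recombine correctly; this is precisely why the statement lives on $\Pi$ rather than on a single completion. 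This is recorded as Proposition \ref{invol} in the paper.

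\textbf{Braid relations.} Here I would reduce to rank $2$, exactly as the paper indicates: a braid relation $\underbrace{\Theta_i\Theta_j\cdots}_{m_{ij}} = \underbrace{\Theta_j\Theta_i\cdots}_{m_{ij}}$ involves only the variables $Y_{k,a}$ with $k \in \{i,j\}$ together with those $Y_{k,a}$, $k \notin\{i,j\}$, that both sides fix, and only the combinatorics of the $2\times 2$ submatrix $\binom{C_{ii}\ C_{ij}}{C_{ji}\ C_{jj}}$ enters the formulas for $A_{i,a}, A_{j,a}$ restricted to these variables. So it is enough to check the relation for $\g$ of type $A_1\times A_1$ (trivial, $m_{ij}=2$ and the supports are disjoint), $A_2$ ($m_{ij}=3$), $B_2$ ($m_{ij}=4$), and $G_2$ ($m_{ij}=6$). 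In each case one tracks the images of $Y_{i,a}$ and $Y_{j,a}$ through the word, using \eqref{firstqe1} repeatedly to simplify the accumulating $\Sigma$-ratios, and checks the two words agree summand by summand in $\Pi$ — again the choice of $\pm$ expansion at each step is dictated by which roots of the rank-$2$ system the partial products $ws_i, ws_is_j,\dots$ send $\alpha_i,\alpha_j$ into.

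\textbf{Main obstacle.} I expect the braid relation for $G_2$ (and to a lesser extent $B_2$) to be the genuine difficulty: the word has length $6$, the monomial $A_{i,a}$ for the long/short root involves several $Y_{j,\bullet}$ at shifted spectral parameters (three distinct shifts in the $C_{ji}=-3$ case), and the $\Sigma$-ratios compound into long but ultimately telescoping products. The right way to control this is to find the correct closed form for the image of each $Y_{k,a}$ after a partial word $\Theta_i\Theta_j\cdots$ — presumably a single monomial in the $Y$'s and $A$'s times one ratio $\Sigma^{?}_{i,\bullet}/\Sigma^{?}_{j,\bullet}$ or similar — prove it by induction on the length of the word using only \eqref{firstqe1} and the equivariance under $\tau_a$, and then read off that the two length-$m_{ij}$ words give the same answer. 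Making the inductive statement sharp enough to be stable under one more application of $\Theta_i$ or $\Theta_j$, while correctly handling the switch between $\Sigma^+$ and $\Sigma^-$, is the technical heart of the argument; once the rank-$2$ cases are in hand, assembling them into Theorem \ref{weylaction} is immediate from the reduction in Section \ref{braidrel}.
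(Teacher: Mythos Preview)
Your overall architecture matches the paper exactly: Proposition \ref{invol} handles $\Theta_i^2=\mathrm{Id}$, Section \ref{braidrel} reduces the braid relations to the four rank-$2$ types, and in each case one tracks the image of $Y_{i,a}$ through the word. But the mechanisms you expect to drive the computations are not the ones that actually work, and I think you would get stuck at two points.

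For the involution, ``telescoping of consecutive $\Sigma$-ratios'' together with the observation that $\Sigma^{ws_i}_{i,a}$ is the opposite expansion is not enough: you must compute $\Theta_i^{ws_i}(\Sigma^{ws_i}_{i,a})$, which is an infinite series in $\wt{\mc Y}^w$, and there is no direct telescoping. The paper instead applies $\Theta_i^{ws_i}$ to the $q$-difference equation \eqref{firstqe1}, obtains a new $q$-difference equation \eqref{tsia} for $\Theta_i^{ws_i}(\Sigma^{ws_i}_{i,a})$, checks that $1-\Sigma^w_{i,a}$ also solves it, and invokes the uniqueness Lemma \ref{unique} to conclude $\Theta_i(\Sigma_{i,a})=1-\Sigma_{i,a}$ (formula \eqref{oneminus}). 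That identity, not any cancellation between the two expansions, is what makes the computation close.

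For the braid relations, your guess about the shape of the partial images is correct: the paper writes them as a monomial times a ratio $\Sigma_{\bullet,aq^{-r}}/\Sigma_{\bullet,aq^{-s}}$, where $\Sigma_{ij,a},\Sigma_{iji,a},\ldots$ are defined inductively by $\Theta_i(\Sigma_{j\cdots,a})=\Sigma_{ij\cdots,a}/(\text{product of }\Sigma_i)$. But the crux is showing that the \emph{last} such $\Sigma$ (e.g.\ $\Sigma_{ijiji,a}$ in type $G_2$) is fixed by the remaining $\Theta_j$, and this is not proved by induction on word length from \eqref{firstqe1} alone. The paper instead writes down explicit monomial expansions of $E_w(\Sigma_{\cdots,a})$ in terms of the $V_{k,a}^{(\alpha)}$ of Definition \ref{TV}, and then factors each weight-graded piece as a $\Theta_j$-invariant $T^{(k)}_{j,b}$ (Proposition \ref{fixed}) times a monomial in the $Y_{i,b}^{\pm 1}$. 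This combinatorial ingredient --- the family of finite invariants $T^{(k)}_{i,a}$ --- is the missing lemma in your plan, and without it the $B_2$ and $G_2$ checks do not go through.
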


The first step is to show that $\Theta_i^2 = \on{Id}$ for all $i \in
I$.

\begin{prop}\label{invol} Each endomorphism $\Theta_i, i \in I$, of
  $\Pi$ is an involution.
\end{prop}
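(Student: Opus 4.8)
The plan is to verify the identity $\Theta_i^2 = \on{Id}$ componentwise: it suffices to show that for every $w \in W$ the composite $\Theta_i^{ws_i} \circ \Theta_i^{w} : \wt{\mc Y}^w \to \wt{\mc Y}^w$ equals the identity. Since both maps are continuous ring homomorphisms and $\wt{\mc Y}^w$ is a completion of $\Yim$, it is enough to check the identity on the generators $Y_{j,a}^{\pm 1}$; and for $j \neq i$ it is immediate from the definition, so the whole content is the single relation
\begin{equation*}
\Theta_i^{ws_i}\bigl(\Theta_i^{w}(Y_{i,a})\bigr) = Y_{i,a}.
\end{equation*}
Here one must be careful: on the right-hand factor $\Theta^w_i$ lands in $\wt{\mc Y}^{ws_i}$, and then $\Theta^{ws_i}_i$ sends it back into $\wt{\mc Y}^{(ws_i)s_i} = \wt{\mc Y}^w$, so the $\Sigma$'s appearing after the first application carry the superscript $ws_i$, while those appearing after the second carry superscript $w$. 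The crux of the argument is an interplay between the two expansions $\Sigma^+$ and $\Sigma^-$ of $\Sigma_{i,a}$: when $w(\alpha_i) \in \Delta^+$ we have $ws_i(\alpha_i) = w(-\alpha_i) \in \Delta^-$, and vice versa, so the two applications of $\Theta_i$ use \emph{opposite} expansions of $\Sigma$. This sign flip is exactly what is needed for the cancellation to work, and it is the reason the naive attempt with a single expansion would fail.

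Concretely, I would first record how $\Theta^{w}_i$ acts on the relevant building blocks. Applying $\Theta^w_i$ to $A_{i,b}$ (which up to shifts is a ratio of $Y_{i,\bullet}$'s times $Y_{j,\bullet}$'s with $j \neq i$) gives $A_{i,b}$ times a ratio of the form $\Sigma^{ws_i}_{i,bq_i^{-1}}\,\Sigma^{ws_i}_{i,bq_i^{-3}} \big/ \bigl(\Sigma^{ws_i}_{i,bq_i}\Sigma^{ws_i}_{i,bq_i^{-5}}\bigr)$ or similar — the precise shifts to be computed from \eqref{Ai} and \eqref{Thetai1}, using that $\omega(A_{i,b}) = \alpha_i$. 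The key simplification is the $q$-difference equation \eqref{firstqe1}, $\Sigma^w_{i,b} = 1 + A_{i,b}^{-1}\Sigma^w_{i,bq_i^{-2}}$, which lets one rewrite products/ratios of shifted $\Sigma$'s; one also needs the companion relation obtained by solving \eqref{firstqe1} "the other way", $\Sigma^w_{i,b} = 1 + A_{i,bq_i^2}(\Sigma^w_{i,bq_i^2})^{-1}\cdot(\text{something})$ — more usefully, the two expansions \eqref{Sigmai1}, \eqref{Sigmaibis1} satisfy the algebraic identity linking $\Sigma^+$ and $\Sigma^-$ at shifted arguments, namely $\Sigma^-_{i,b} = -A_{i,bq_i^2}\,\tau(\cdots)$, reflecting that "$\Sigma^+_{i,b} - \Sigma^-_{i,b}$" would be a solution of the homogeneous equation and hence, by Lemma \ref{unique}, controlled. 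I would then substitute $\Theta^w_i(Y_{i,a})$ into $\Theta^{ws_i}_i$, expand using these rules, and watch the telescoping ratio of $\Sigma$'s collapse together with the $A_{i,aq_i^{-1}}^{-1}$ factors back to $Y_{i,a}$.

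The cleanest way to organize the final cancellation, and the step I expect to be the main obstacle, is to avoid brute-force manipulation of the nested products and instead argue by uniqueness. After applying $\Theta_i$ twice, the image of $Y_{i,a}$ is some explicit $\C^\times$-equivariant family $\chi(a) \in \wt{\mc Y}^w$ lying in $\wt G^w$ with weight $\omega_i$ (the $A$-factors contribute $-\alpha_i$ from each $\Theta_i$, and $s_i s_i = \mathrm{id}$ on weights, so the total weight is $\omega_i$, matching $Y_{i,a}$). Both $\chi(a)$ and $Y_{i,a}$ satisfy the \emph{same} first-order $q$-difference equation in the variable $a$ — one derives this equation for $\chi(a)$ from \eqref{firstqe1} and the equivariance — so their ratio $\chi(a) Y_{i,a}^{-1}$ is a $\C^\times$-equivariant solution of a homogeneous $q$-difference equation of the type covered by Lemma \ref{unique} (after subtracting $1$), forcing $\chi(a) = Y_{i,a}$. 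The delicate point is setting up that $q$-difference equation correctly and checking the leading term is $1$ rather than $-1$, which is where the $\Delta^+$ versus $\Delta^-$ bookkeeping for the two $\Sigma$-expansions genuinely enters; getting the shifts $aq_i^{-1}, aq_i^{-3}$ in \eqref{Thetai1} to conspire correctly with the shift $q_i^{-2}$ in \eqref{firstqe1} is the combinatorial heart of the proof. Once that is done, taking the direct sum over $w \in W$ yields $\Theta_i^2 = \on{Id}$ on all of $\Pi$.
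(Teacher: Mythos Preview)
Your reduction to generators, the observation that $w$ and $ws_i$ use opposite expansions of $\Sigma$, and the instinct to invoke Lemma~\ref{unique} are all correct and match the paper. The gap is in your ``cleanest way'': there is no nontrivial $q$-difference equation of the form required by Lemma~\ref{unique} that $Y_{i,a}$ itself satisfies. The variables $Y_{i,a}$ and $Y_{i,aq^{-r}}$ are algebraically independent generators of $\Yim$, so any relation of the shape $\chi(a) = \Psi\,\tau_{q^{-r}}(\chi(a))$ with $\Psi$ of weight in $\Delta$ would be vacuous for $\chi = Y_{i,a}$. Hence the uniqueness argument, applied at the level of $\chi(a) = \Theta_i^2(Y_{i,a})$ versus $Y_{i,a}$, does not get off the ground.

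The paper applies uniqueness one level down, to $\Theta_i^{ws_i}(\Sigma^{ws_i}_{i,a})$. Concretely: first compute $\Theta_i^{ws_i}(A_{i,a}^{-1}) = A_{i,aq_i^{-2}}\,\Sigma^w_{i,a}/\Sigma^w_{i,aq_i^{-4}}$ (your shifts were off); then apply $\Theta_i^{ws_i}$ to \eqref{firstqe1} to obtain a $q$-difference equation for $\Theta_i^{ws_i}(\Sigma^{ws_i}_{i,a})$ in $\wt{\mc Y}^w$; check directly that $1-\Sigma^w_{i,a}$ solves the same equation; and conclude via Lemma~\ref{unique} that $\Theta_i^{ws_i}(\Sigma^{ws_i}_{i,a}) = 1 - \Sigma^w_{i,a} = -A_{i,a}^{-1}\Sigma^w_{i,aq_i^{-2}}$. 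With this single identity in hand, substituting into $\Theta_i^{ws_i}\bigl(\Theta_i^w(Y_{i,a})\bigr)$ is a three-line product that collapses to $Y_{i,a}$. Note also that your remark about $\Sigma^+ - \Sigma^-$ being ``controlled'' by Lemma~\ref{unique} is not quite right: the two expansions live in different completions $\wt{\mc Y}^w$, so their difference is not an element of any single $\wt{\mc Y}^w$; the lemma is only ever applied within a fixed $w$, and the $\pm$ dichotomy is absorbed automatically because Lemma~\ref{unique} covers weights in all of $\Delta$, not just $\Delta_+$.
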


\begin{proof} We first prove that $\Theta_i^2 = \text{Id}$
  on each $\mathcal{Y}^w \subset \Pi, w \in W$. We have
    $\Theta^w_i(Y_{j,a}) = Y_{j,a}$ for all $j \neq i$. The
    homomorphism property of $\Theta^w_i$ implies that we
    only need to show that viewing $Y_{i,a}$ as an element of
    $\mathcal{Y}^w$, we have
\begin{equation}    \label{invw}
  \Theta_i^{ws_i} \circ \Theta_i^w(Y_{i,a}) = Y_{i,a},
\end{equation}
which, according to formula (\ref{Thetai1}) is equivalent to
\begin{equation}    \label{wsi}
\Theta_i^{ws_i} \left( Y_{i,a}A_{i,aq_i^{-1}}^{-1} \;
\frac{\Sigma^{ws_i}_{i,aq_i^{-3}}}{\Sigma^{ws_i}_{i,aq_i^{-1}}}
\right) = Y_{i,a}.
\end{equation}
Formula (\ref{Thetai1}) (with $w$ replaced by $ws_i$) also implies
that
\begin{equation}    \label{ThA}
    \Theta^{ws_i}_i(A_{i,a}^{-1}) = A_{i,aq_i^{-2}} \;
    \frac{\Sigma^{w}_{i,a}}{\Sigma^{w}_{i,aq_i^{-4}}}.
\end{equation}
Using \eqref{firstqe1} for the element $ws_i \in W$, we obtain that
$\Theta^{ws_i}_i(\Sigma^{ws_i}_{i,a}) \in \wt{\mc Y}^w$ is a solution of the
$q$-difference equation
\begin{equation}\label{tsia}
\Theta^{ws_i}_i(\Sigma^{ws_i}_{i,a}) = 1 + A_{i,aq_i^{-2}} \;
\frac{\Sigma^{w}_{i,a}}{\Sigma^{w}_{i,aq_i^{-4}}}
\Theta^{ws_i}_i(\Sigma^{ws_i}_{i,aq_i^{-2}}).
\end{equation}
Note that
$$
A_{i,aq_i^{-2}} \frac{\Sigma^{w}_{i,a}}{\Sigma^{w}_{i,aq_i^{-4r}}}\in
\wt{G}^w
$$
and has weight $\alpha_i\in P$. Therefore, Lemma
\ref{unique} implies that $\Theta^{ws_i}_i(\Sigma^{ws_i}_{i,a})$
is a unique solution of equation (\ref{tsia}) in $\wt{\mc Y}^w$.

On the other hand, using formula \eqref{firstqe1} now for the element
$w \in W$, we find that $1 - \Sigma^{w}_{i,a}$ is also a solution of
(\ref{tsia}) in $\wt{\mc Y}^w$. Indeed,
$$1 + A_{i,aq_i^{-2}} \;
\frac{\Sigma^w_{i,a}}{\Sigma^w_{i,aq_i^{-4}}}
(1 - \Sigma^w_{i,aq_i^{-2}}) 
= 1 - A_{i,aq_i^{-2}} \;
\frac{\Sigma^w_{i,a}}{\Sigma^w_{i,aq_i^{-4}}}A_{i,aq_i^{-2}}^{-1}\Sigma^w_{i,aq_i^{-4}}
= 1 - \Sigma^w_{i,a}.$$
Applying Lemma \ref{unique}, we derive that the two solutions must
be equal, i.e.
\begin{equation}    \label{oneminus}
\Theta^{ws_i}_i(\Sigma^{ws_i}_{i,a}) = 1 - \Sigma^w_{i,a} = -
A_{i,a}^{-1}\Sigma^w_{i,aq_i^{-2}}.
\end{equation}
Substituting this formula, as well as \eqref{Thetai1} and \eqref{ThA},
into the LHS of \eqref{wsi}, we obtain that it, and hence the LHS of
\eqref{invw}, is equal to
$$Y_{i,a}
A_{i,aq_i^{-1}}^{-1}\frac{\Sigma^w_{i,aq_i^{-3}}}{\Sigma^w_{i,aq_i^{-1}}}
\cdot A_{i,aq_i^{-3}}
\frac{\Sigma^{w}_{i,aq_i^{-1}}}{\Sigma^{w}_{i,aq_i^{-5}}} \cdot
\frac{- A_{i,aq_i^{-3}}^{-1}\Sigma^w_{i,aq_i^{-5}}}{-
  A_{i,aq_i^{-1}}^{-1}\Sigma^w_{i,aq_i^{-3}}} = Y_{i,a}.
$$
This proves that $\Theta_i^2 = \text{Id}$ on each $\mathcal{Y}^w
\subset \Pi, w \in W$. The continuity of $\Theta_i$ (see Lemma
\ref{conthom}) implies that the same is true on the entire $\Pi$. This
completes the proof.
\end{proof}

 The braid group relations between the $\Theta_i$'s will
  be proved in the next section. We close this section with the
  statement about the compatibility of the actions of $W$ on the
  completions of the rings of characters and $q$-characters.

\begin{lem}    \label{Pipi}
The projection $\varpi: \Pi \to \pi$ intertwines
the actions of $W$ on the two rings given by Theorem \ref{weylaction}
and Lemma \ref{Wfd}.
\end{lem}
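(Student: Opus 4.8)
The plan is to verify the intertwining relation $\varpi \circ \Theta_i = s_i \circ \varpi$ on the generators $Y_{j,a}^{\pm 1}$ of each summand $\mathcal{Y}^w \subset \Pi$, and then extend by continuity and the direct-sum decomposition. Since both $\varpi$ and $\Theta_i$ are ring homomorphisms (componentwise), and since $s_i$ acts as a ring homomorphism on each $(\mathbb{Z}[y_j^{\pm 1}]_{j\in I})^w$ with target $(\mathbb{Z}[y_j^{\pm 1}]_{j\in I})^{ws_i}$, it suffices to check equality of the two homomorphisms $\mathcal{Y}^w \to (\mathbb{Z}[y_j^{\pm 1}]_{j\in I})^{ws_i}$ on generators. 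First I would record that $\varpi_{ws_i} \circ \Theta^w_i$ and $s_i^w \circ \varpi_w$ agree trivially on $Y_{j,a}^{\pm 1}$ for $j \neq i$: both send it to $y_j^{\pm 1}$, because $\Theta^w_i$ fixes such $Y_{j,a}$ and $s_i \cdot y_j = y_j$ for $j \neq i$ (the exponent $\delta_{ij}$ vanishes). So the entire content is the case $j = i$.

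The key computation is then to evaluate $\varpi_{ws_i}\bigl(\Theta^w_i(Y_{i,a})\bigr)$ using formula \eqref{Thetai1} and compare it with $s_i \cdot y_i = y_i a_i^{-1}$. Applying $\varpi_{ws_i}$ to $Y_{i,a}A_{i,aq_i^{-1}}^{-1}$ gives $y_i a_i^{-1}$, since $\varpi$ sends $Y_{i,b} \mapsto y_i$ for every $b$, hence $A_{i,b} \mapsto a_i$ for every $b$ by comparing \eqref{Ai} with \eqref{ai}. It therefore remains to show that $\varpi_{ws_i}$ kills the correction factor $\Sigma^{ws_i}_{i,aq_i^{-3}}/\Sigma^{ws_i}_{i,aq_i^{-1}}$, i.e. that it sends both $\Sigma^{ws_i}_{i,b}$'s to the same element of $(\mathbb{Z}[y_j^{\pm 1}]_{j\in I})^{ws_i}$. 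From the $q$-difference equation \eqref{firstqe1}, applying $\varpi_{ws_i}$ gives $\varpi_{ws_i}(\Sigma^{ws_i}_{i,a}) = 1 + a_i^{-1}\,\varpi_{ws_i}(\Sigma^{ws_i}_{i,aq_i^{-2}})$; since $\varpi$ is $\mathbb{C}^\times$-equivariant in the obvious sense (it intertwines $\tau_a$ with the identity, as $\tau_a$ fixes the $y_i$), the image is independent of the spectral parameter $a$ and satisfies $\sigma = 1 + a_i^{-1}\sigma$, whose unique solution in the relevant completion is $\sigma = (1 - a_i^{-1})^{-1} = -a_i(1-a_i)^{-1}$ (using the expansion in $a_i^{-1}$ when $ws_i(\alpha_i) \in \Delta^+$, and the alternative geometric-series expansion otherwise — the two agree as rational functions). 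In particular the images of $\Sigma^{ws_i}_{i,aq_i^{-3}}$ and $\Sigma^{ws_i}_{i,aq_i^{-1}}$ coincide, so the fraction maps to $1$ and $\varpi_{ws_i}(\Theta^w_i(Y_{i,a})) = y_i a_i^{-1} = s_i \cdot y_i$, as desired.

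Finally I would assemble these componentwise identities into the statement $\varpi \circ \Theta_i = (s_i$ acting on $\pi) \circ \varpi$ on all of $\Pi$, using the continuity of all maps involved (Lemma \ref{conthom} for $\Theta_i$, and the fact that $\varpi_w$ is a continuous ring homomorphism by construction) to pass from generators to arbitrary formal series, and then observe that since $\Theta_i$ and the $s_i$ generate the respective $W$-actions, $\varpi$ intertwines the full actions. The only point requiring a little care is the claim that $\varpi_w$ is well-defined and continuous — but this is built into \eqref{varpiw} — and the claim that the two series expansions of $\Sigma^\pm_{i,a}$ have the \emph{same} image $-a_i(1-a_i)^{-1}$ under $\varpi$; this is immediate because $\varpi$ of \eqref{Sigmai1} is the geometric series $\sum_{k\geq 0} a_i^{-k}$ and $\varpi$ of \eqref{Sigmaibis1} is $-\sum_{k>0} a_i^{k}$, which are the two standard expansions of $(1-a_i^{-1})^{-1}$ in $\mathbb{Z}(\!(a_i^{-1})\!)$ and $\mathbb{Z}(\!(a_i)\!)$ respectively, landing in the appropriate completion $(\mathbb{Z}[y_j^{\pm 1}]_{j\in I})^{ws_i}$ according to the sign of $ws_i(\alpha_i)$. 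I do not anticipate a serious obstacle; the main thing to get right is bookkeeping of which completion each term lives in and the consistent use of $\mathbb{C}^\times$-equivariance to reduce the $q$-difference equation to a numerical equation in $a_i$.
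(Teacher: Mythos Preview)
Your proposal is correct and follows essentially the same approach as the paper: the paper's proof simply asserts the componentwise identity $\varpi_{ws_i}\circ\Theta_i^w = s_i\circ\varpi_w$ and says it ``follows from the definitions,'' while you have written out precisely the verification on generators (trivially for $j\neq i$, and via the cancellation of the $\Sigma$-factor for $j=i$) that justifies this assertion. Your treatment of the $\Sigma$-factor, observing that $\varpi$ collapses the spectral parameter so that numerator and denominator have the same image $(1-a_i^{-1})^{-1}$ in the appropriate completion, is exactly the computation the paper leaves implicit.
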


\begin{proof}
It follows from the definitions that
\begin{equation}\label{schar}\varpi_{ws_i} \circ \Theta_i^w = s_i
  \circ \varpi_w.
\end{equation}
This implies the statement of the lemma.
\end{proof}

\section{Braid group relations}    \label{braidrel}

In this section we complete the proof Theorem \ref{weylaction}. Since
we have already proved in Proposition \ref{invol} that the
endomorphisms $\Theta_i, i \in I$, are involutions of $\Pi$, it
suffices to prove that they satisfy the braid group relations
$(\mathcal{R}_{i,j})$ for $i\neq j$. Using the continuity of
$\Theta_i$ as in the proof of Proposition \ref{invol}, it suffices to
show that the braid relations are satisfied on the elements of
$\mathcal{Y}^w \subset \Pi$ for all $w \in W$.  Since each
  $\Theta_i$ is a ring automorphism of $\Pi$, it is sufficient to show
  that the braid relations are satisfied on the generators $Y_{k,a}$
  of $\mathcal{Y}^w$.

In the above proof of Proposition \ref{invol} we explicitly tracked
the component equations in $\wt{\mathcal{Y}}^w$ corresponding to
different $w \in W$. But in this section, whenever possible, we
combine these components into a single equation in $\Pi$. We can go
back and forth between the two presentations because each braid
relation $(\mathcal{R}_{i,j})$ has the form $\Theta_i \Theta_j \ldots
\Theta_i \Theta_j = \Theta_j \Theta_i \ldots \Theta_j \Theta_i$ or
$\Theta_i \Theta_j \ldots \Theta_i = \Theta_j \Theta_i \ldots
\Theta_j$. These automorphisms of $\Pi$ map $\wt{\mathcal{Y}}^w$ to
$\wt{\mathcal{Y}}^{w'}$, where $w' = ws_is_j \ldots s_is_j = ws_js_i
\ldots s_js_i$ or $w' = ws_is_j \ldots s_i = ws_js_i \ldots
s_j$. Since the map $W \to W$ given by $w \mapsto w'$ is a bijection,
we can always disentangle the components of a combined relation
corresponding to different $w \in W$.

In our computations below, unless stated otherwise, every
element we consider is a sum of its components in $\wt{\mathcal{Y}}^w$
for all $w \in W$. For instance, if we write $Y_{i,a}$, we mean the
element of $\Pi$ which is equal to the sum of the elements $Y_{i,a}$
contained in $\mathcal{Y}^w \subset \wt{\mathcal{Y}}^w$ for all $w \in
W$, and if we write $\Sigma_{i,a}$, we mean the sum of the elements
$\Sigma_{i,a}^w \in \wt{\mathcal{Y}}^w$ for all $w \in W$ (as in
formula \eqref{totalSigma}). Every equation we write should be viewed
as an equation in $\Pi$, i.e. a collection of the component equations
in $\wt{\mathcal{Y}}^w$ for all $w \in W$.

With this understanding, formulas (\ref{Thetai1}),
  (\ref{ThA}) and (\ref{oneminus}) are written as follows:
$$\Theta_i(Y_{i,a}) = Y_{i,a}A_{i,aq_i^{-1}}^{-1} \;
  \frac{\Sigma_{i,aq_i^{-3}}}{\Sigma_{i,aq_i^{-1}}}\text{ , }
 \Theta_i(A_{i,a}^{-1}) = A_{i,aq_i^{-2}} \;
    \frac{\Sigma_{i,a}}{\Sigma_{i,aq_i^{-4}}},$$
$$\Theta_i(\Sigma_{i,a}) =  1  - \Sigma_{i,a} =
    -A_{i,a}^{-1}\Sigma_{i,aq_i^{-2}}.$$

\subsection{Some invariant elements}\label{firstinv}

\begin{defi}    \label{TV}
For $i\in I$, $a\in\mathbb{C}^\times$ and $k\geq 0$, define the
element of $\Yim$, 
\begin{equation}    \label{Tiak}
T_{i,a}^{(k)} := Y_{i,a}Y_{i,aq_i^{-2}}\cdots Y_{i,aq_i^{2 (1-k)}} (1
+ A_{i,aq_i}^{-1}(1 + A_{i,aq_i^{-1}}^{-1} (1 + \cdots +
  A_{i,aq_i^{5-2k}}^{-1}(1 + A_{i,aq_i^{3-2k}}^{-1}))$$
$$= Y_{i,a}\cdots Y_{i,aq_i^{2(1-k)}}\sum_{0\leq \alpha\leq k}
V_{i, aq_i}^{(\alpha)},
\end{equation}
 where
$$  
  V_{i,a}^{(\alpha)} := (A_{i,a}A_{i,aq_i^{-2}}\cdots
  A_{i,aq_i^{2-2\alpha}})^{-1}, \quad \al>0, \qquad V_{i,a}^{(0)} := 1.
  $$
We also define
$$
V_{i,a}^{(\alpha)} =
\left(V_{i,aq_i^{-2\alpha}}^{(-\alpha)}\right)^{-1}, \qquad \alpha
< 0.
$$
\end{defi}

For any $\alpha\in\mathbb{Z}$ we have the relations
\begin{equation}\label{vrel}V_{i,a}^{(\alpha + 1)} =
  V_{i,a}^{(\alpha)}A_{i,aq_i^{-2\alpha}}^{-1} =
  V_{i,aq_i^{-2}}^{(\alpha)} A_{i,a}^{-1}.
\end{equation}

If follows from Definition \ref{exsigor} that
\begin{equation}    \label{split}
\begin{split} 
\Sigma_{i,a}^w = \begin{cases} \sum_{\alpha\geq 0}V_{i,a}^{(\alpha)}  & \text{ if $w(\alpha_i)\in \Delta_+$,}
\\ -\sum_{\alpha < 0}V_{i,a}^{(\alpha)}  & \text{ if $w(\alpha_i)\in
  \Delta_-$}. \end{cases}\end{split}
\end{equation}

\begin{prop}\label{fixed} Each element $T_{i,a}^{(k)}$ given by
  \eqref{Tiak} is fixed by $\Theta_i$.
\end{prop}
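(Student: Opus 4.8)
The plan is to prove the sharper statement that, in a Baxter-type presentation
\[
T_{i,a}^{(k)} \;=\; M_k\,\Sigma_{i,aq_i} \;-\; N_k\,\Sigma_{i,aq_i^{-2k-1}},
\]
the automorphism $\Theta_i$ exchanges the two summands up to sign, so that their difference is visibly preserved. Here $M_k := Y_{i,a}Y_{i,aq_i^{-2}}\cdots Y_{i,aq_i^{2-2k}}$ is the prefactor of Definition \ref{TV} and $N_k := M_k\,V_{i,aq_i}^{(k+1)}$; as elsewhere in this section all equalities are equalities in $\Pi$ and $\Sigma_{i,b}$ denotes the element \eqref{totalSigma}. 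By Definition \ref{TV} we have $T_{i,a}^{(k)} = M_k\sum_{\alpha=0}^{k}V_{i,aq_i}^{(\alpha)}$, so the first step is to establish in $\Pi$ the closed form
\[
\sum_{\alpha=0}^{k}V_{i,aq_i}^{(\alpha)} \;=\; \Sigma_{i,aq_i} \;-\; V_{i,aq_i}^{(k+1)}\,\Sigma_{i,aq_i^{-2k-1}} .
\]
I would prove this by induction on $k$ using only the $q$-difference equation \eqref{firstqe1}: for $k=0$ both sides equal $1$ (apply \eqref{firstqe1} at $aq_i$), and subtracting the identity for $k-1$ from the one for $k$ reduces, via \eqref{vrel} and \eqref{firstqe1}, to the tautology $V_{i,aq_i}^{(k)} = V_{i,aq_i}^{(k)}$. (One can also read the identity off from \eqref{split} after treating the cases $\Sigma^w=\Sigma^+$ and $\Sigma^w=\Sigma^-$ separately.) The value of doing it by the induction is precisely that the resulting expression is valid verbatim on every summand $\wt{\mc Y}^w$ of $\Pi$.

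Next I would compute $\Theta_i$ on the pieces. Applying the formula for $\Theta_i(Y_{i,b})$ (see \eqref{Thetai1}) to each of the $k$ factors of $M_k$ and observing that the ensuing ratios of $\Sigma$'s telescope along the product, one obtains
\[
\Theta_i(M_k) \;=\; M_k\, V_{i,aq_i^{-1}}^{(k)}\,\frac{\Sigma_{i,aq_i^{-2k-1}}}{\Sigma_{i,aq_i^{-1}}} ,
\]
and likewise, applying the formula for $\Theta_i(A_{i,b}^{-1})$ (see \eqref{ThA}) to each of the $k+1$ factors of $V_{i,aq_i}^{(k+1)}$ and telescoping,
\[
\Theta_i\bigl(V_{i,aq_i}^{(k+1)}\bigr) \;=\; \bigl(V_{i,aq_i^{-1}}^{(k+1)}\bigr)^{-1}\,\frac{\Sigma_{i,aq_i}\,\Sigma_{i,aq_i^{-1}}}{\Sigma_{i,aq_i^{-2k-1}}\,\Sigma_{i,aq_i^{-2k-3}}} .
\]
Using $\Theta_i(\Sigma_{i,b}) = 1-\Sigma_{i,b} = -A_{i,b}^{-1}\Sigma_{i,bq_i^{-2}}$ (see \eqref{oneminus}) together with the instances $V_{i,aq_i^{-1}}^{(k)}A_{i,aq_i}^{-1} = V_{i,aq_i}^{(k+1)}$ and $V_{i,aq_i^{-1}}^{(k)}\big/V_{i,aq_i^{-1}}^{(k+1)} = A_{i,aq_i^{-2k-1}}$ of \eqref{vrel}, every $\Sigma$-factor cancels in a chain and one is left with
\[
\Theta_i\bigl(M_k\,\Sigma_{i,aq_i}\bigr) \;=\; -\,N_k\,\Sigma_{i,aq_i^{-2k-1}},
\qquad
\Theta_i\bigl(N_k\,\Sigma_{i,aq_i^{-2k-1}}\bigr) \;=\; -\,M_k\,\Sigma_{i,aq_i}.
\]
Since $\Theta_i$ is a ring homomorphism, subtracting these two identities gives $\Theta_i(T_{i,a}^{(k)}) = -\,N_k\,\Sigma_{i,aq_i^{-2k-1}} + M_k\,\Sigma_{i,aq_i} = T_{i,a}^{(k)}$, which is the assertion of the proposition.

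The only real obstacle is bookkeeping: one must carefully track the numerous shifts of spectral parameters by powers of $q_i$ in the telescoping products for $\Theta_i(M_k)$ and $\Theta_i(V_{i,aq_i}^{(k+1)})$, and one must be sure the closed form for $\sum_{\alpha=0}^{k}V_{i,aq_i}^{(\alpha)}$ is valid simultaneously on all components of $\Pi$ — which is exactly why I would derive it from the component-free induction via \eqref{firstqe1} rather than from the two expansions of Definition \ref{exsigor}. With the closed form in place, the rest is a finite, if slightly tedious, computation in which every $\Sigma$ eventually drops out.
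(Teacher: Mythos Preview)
Your proof is correct and takes a genuinely different route from the paper's. The paper checks $k=1$ by hand and then inducts using the quadratic recurrence
\[
T_{i,aq_i^2}^{(k+1)} \;=\; T_{i,a}^{(k)}\,T_{i,aq_i^2}^{(1)} \;-\; T_{i,aq_i^{-2}}^{(k-1)}\,(Y_{i,a}Y_{i,aq_i^2}A_{i,aq_i}^{-1}),
\]
observing that the last factor lies in $\Z[Y_{j,b}^{\pm 1}]_{j\neq i}$ and is therefore fixed by $\Theta_i$. Your argument instead establishes the Baxter-type decomposition $T_{i,a}^{(k)} = M_k\Sigma_{i,aq_i} - N_k\Sigma_{i,aq_i^{-2k-1}}$ and shows that $\Theta_i$ exchanges the two summands up to sign.

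The paper's approach is shorter and mirrors the $T$-system structure underlying these polynomials. Your approach is more computational but also more revealing: it not only proves invariance but identifies \emph{how} $\Theta_i$ acts on the natural pieces of $T_{i,a}^{(k)}$, and it makes the appearance of $\Sigma$ in the $q$-character combinatorics completely explicit. Deriving the closed form for $\sum_{\alpha=0}^k V_{i,aq_i}^{(\alpha)}$ directly from the $q$-difference equation \eqref{firstqe1}, rather than from the two expansions in Definition \ref{exsigor}, is the right choice and guarantees the identity holds uniformly in $\Pi$.
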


\begin{proof} For $k = 1$, the image of $Y_{i,a} (1 + A_{i, aq_i}^{-1})$ under $\Theta_i$ is 
$$\frac{ Y_{i,a}A_{i,aq_i^{-1}}^{-1}\Sigma_{i,aq_i^{-3}} + Y_{i,a} \Sigma_{i,aq_i} 
}{\Sigma_{i,aq_i^{-1}}} = \frac{ Y_{i,a}(\Sigma_{i,aq_i^{-1}} - 1) + Y_{i,a} ( 1 + A_{i,aq_i}^{-1}\Sigma_{i,aq_i^{-1}}) 
}{\Sigma_{i,aq_i^{-1}}} = Y_{i,a}( 1 + A_{i,aq_i}^{-1}).$$
Then we have the recurrence relation for $k\geq 1$: 
$$T_{i,aq_i^2}^{(k+1)} = T_{i,a}^{(k)}T_{i,aq_i^2}^{(1)} - T_{i,aq_i^{-2}}^{(k-1)}(Y_{i,a}Y_{i,aq_i^2}A_{i,aq_i}^{-1}).$$
The result follows since $Y_{i,a}Y_{i,aq_i^2}A_{i,aq_i}^{-1}\in\mathbb{Z}[Y_{j,b}^{\pm 1}]_{j\neq i, b\in\mathbb{C}^\times}$ is fixed by $\Theta_i$.
\end{proof}

\subsection{Reduction to the rank $2$ Lie algebras}    \label{reduc}

 As we explained at the beginning of Section
  \ref{braidrel}, in order to prove that a braid relation
  $(\mathcal{R}_{i,j})$ (with $i\neq j$ in $I$) holds, it is
  sufficient to verify that the relation $(\mathcal{R}_{i,j})$
  applied to $Y_{k,a}$ holds for every $k \in I$ and
  $a\in\CC^\times$. Denote the latter by $({\mc R_{i,j}}(Y_{k,a}))$.

  If $k\notin\{i,j\}$, then $\Theta_i(Y_{k,a}) = \Theta_j(Y_{k,a}) =
  Y_{k,a}$, and so $(\mathcal{R}_{i,j}(Y_{k,a}))$ holds. Hence we only
  need to check that $(\mathcal{R}_{i,j}(Y_{i,a}))$ and
  $(\mathcal{R}_{i,j}(Y_{j,a}))$ hold. Moreover, it follows from the
  definition of $\Theta_i$ and $\Theta_j$ that the result of applying
  them to $Y_{i,a}$ and $Y_{j,a}$ can be expressed in terms of
  iterated solutions of $q$-difference equations involving only
  $A_{i,b}^{\pm 1}$ and $A_{j,b}^{\pm 1}$.

Hence, the action of any successive composition of the automorphisms
$\Theta_i$, $\Theta_j$ on $Y_{i,a}$ and $Y_{j,a}$ for the Lie algebra
$\g$ coincides with the corresponding action for the rank 2 Lie
algebra $\mathfrak{g}_{i,j}$ (associated to the simple roots
$\alpha_i$ and $\alpha_j$) if replace the variables $Y_{i,a}$,
$Y_{j,a}$, $A_{i,b}^{\pm 1}$, and $A_{j,b}^{\pm 1}$ of type
$\mathfrak{g}$ with the corresponding variables of type
$\mathfrak{g}_{i,j}$. Therefore, if the relation $(\mathcal{R}_{i,j})$
holds for $\mathfrak{g}_{i,j}$, then it also holds for
$\mathfrak{g}$.  We thus obtain the following result.

\begin{thm}    \label{weylaction1}
Theorem \ref{weylaction} follows from the braid relation for all
rank 2 semisimple Lie algebras.
\end{thm}

It remains to prove the braid relations for types $A_1\times A_1$,
$A_2$, $B_2$, and $G_2$. In Sections \ref{A1} and \ref{sltb}, we
  prove them combinatorially for types $A_1\times A_1$ and $A_2$,
  respectively. This implies Theorem \ref{weylaction} for all
  simply-laced $\g$. Then in Section \ref{gtb} we prove them uniformly
  for types $A_2, B_2$, and $G_2$ using the $q$-character
  versions of the generalized $TQ$-relations for these Lie algebras
  established in \cite{FH}.

\subsection{Type $A_1\times A_1$}    \label{A1}
Suppose that $C_{i,j} = 0$. Then $({\mc R}_{i,j})$ is
\begin{equation}    \label{A1A1}
\Theta_j\Theta_i = \Theta_i \Theta_j.
\end{equation}
Since $\Theta_j(A_{i,a}) = A_{i,a}$, we obtain that
$\Theta_j(\Sigma_{i,1})$ satisfies the same relation (\ref{firstqe})
as $\Sigma_{i,1}$, and hence $\Theta_j(\Sigma_{i,1}) = \Sigma_{i,1}$
by Lemma \ref{unique}. Therefore
$$(\Theta_j\Theta_i)(Y_{i,1}) = \Theta_i(Y_{i,1}) =
(\Theta_i\Theta_j)(Y_{i,1}),$$ so $(\mathcal{R}_{i,j}(Y_{i,a}))$
holds. Applying the automorphism of $A_1 \times A_1$
exchanging $i$ and $j$, we obtain that $(\mathcal{R}_{i,j}(Y_{j,a}))$
also holds, and so we are done.

To handle the cases with $C_{i,j} < 0$, we need some preliminary results.

\subsection{Image of the $\Sigma_{j,a}$}\label{srel}  Suppose that $C_{i,j} < 0$. We have
\begin{equation*}\begin{split}\Theta_i(A_{j,a}) = A_{j,a} A_{ij,a} \frac{\Sigma_{i,aq_i^{-2-C_{i,j}}}}{\Sigma_{i,aq_i^{-2 + C_{i,j}}}} 
\text{ where } A_{ij,a} = \begin{cases} A_{i,aq_i^{-1}} &\text{ if $C_{i,j} = -1$,} 
\\ A_{i,aq^{-2}}A_{i,a}&\text{ if $C_{i,j} = -2$,}
\\ A_{i,aq}A_{i,aq^{-1}}A_{i,aq^{-3}}&\text{ if $C_{i,j} = -3$.}
\end{cases}\end{split}
\end{equation*}
In particular,
$$\Theta_i(\Sigma_{j,a}) = 1 + A_{j,a}^{-1}A_{ij,a}^{-1} \Theta_i(\Sigma_{j,aq_j^{-2}})\frac{\Sigma_{i,aq_i^{-2+C_{i,j}}}}{\Sigma_{i,aq_i^{-2 - C_{i,j}}}}.$$
The following formulas will be useful:
\begin{equation*}\begin{split}\Theta_i(\Sigma_{j,a}) =  \frac{\Sigma_{ij,a}}{\Sigma_{i,a}^{(j)}} \text{ where }
\Sigma_{i,a}^{(j)} =  \begin{cases} \Sigma_{i,aq_i^{-2-C_{i,j}}} &\text{ if $C_{j,i} = -1$,} 
\\ \Sigma_{i,aq^{-2}}\Sigma_{i,aq^{-4}}&\text{ if $C_{j,i} = -2$,}
\\ \Sigma_{i,aq^{-3}}\Sigma_{i,aq^{-5}}\Sigma_{i,aq^{-7}}&\text{ if $C_{j,i} = -3$,}
\end{cases}\end{split}\end{equation*}
and $\Sigma_{ij,a}$ is the unique solution of the $q$-difference equation
\begin{equation}\label{qdiffs}\Sigma_{ij,a} = \Sigma_{i,a}^{(j)} +
  A_{j,a}^{-1} A_{ij,a}^{-1}   \Sigma_{ij,aq_j^{-2}}.\end{equation}

\subsection{Simply-laced types}\label{sltb}
 In this subsection, we prove the braid relations for type
  $A_2$. By the argument of Section \ref{reduc}, this implies Theorem
  \ref{weylaction} for all simply-laced $\g$.

Suppose that $C_{i,j} = C_{j,i} = -1$ and $d_i = d_j =
1$.  Then $({\mc R}_{i,j})$ is
\begin{equation}    \label{A2rel}
\Theta_i\Theta_j\Theta_i = \Theta_j\Theta_i\Theta_j.
\end{equation}
Applying the automorphism of $A_2$
exchanging $i$ and $j$ to the relation
$(\mathcal{R}_{i,j}(Y_{i,a}))$, we obtain
$(\mathcal{R}_{i,j}(Y_{j,a}))$. Hence it is sufficient to prove 
$(\mathcal{R}_{i,j}(Y_{i,a}))$, which is the following

\begin{prop}    \label{A2}
  $$
  (\Theta_i\Theta_j\Theta_i)(Y_{i,a}) =
  (\Theta_j\Theta_i\Theta_j)(Y_{i,a}).
  $$
\end{prop}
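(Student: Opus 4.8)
The plan is to prove the identity $(\Theta_i\Theta_j\Theta_i)(Y_{i,a}) = (\Theta_j\Theta_i\Theta_j)(Y_{i,a})$ by carefully computing both sides as elements of $\Pi$, working entirely with the $q$-difference equations governing the $\Sigma$'s rather than with explicit series. The key organizing principle is that, by the rank-$2$ reduction already carried out in Section \ref{braidrel} and the continuity of the $\Theta$'s (Lemma \ref{conthom}), it suffices to work with the generators $Y_{i,a}$; and by the $\mathbb{C}^\times$-equivariance of everything in sight (all the $\Theta_i$ commute with the $\tau_b$'s, since formula \eqref{Thetai1} is $\mathbb{C}^\times$-equivariant), we may set $a = 1$ without loss of generality. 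I would then expand $\Theta_i(Y_{i,1})$ using \eqref{Thetai1}, apply $\Theta_j$ using the formula $\Theta_j(\Sigma_{i,b}) = \Sigma_{ij,b}/\Sigma_{i,b}^{(j)}$ from Section \ref{srel} (specialized to $C_{i,j} = C_{j,i} = -1$, $q_i = q_j = q$, where $A_{ij,b} = A_{i,bq^{-1}}$ and $\Sigma_{i,b}^{(j)} = \Sigma_{i,bq^{-1}}$), and then apply $\Theta_i$ once more; and symmetrically for the right-hand side. Throughout, the auxiliary element $\Sigma_{ij,b}$ defined by the $q$-difference equation \eqref{qdiffs} plays the role of a ``second-order'' object analogous to $\Sigma_{i,b}$, and the crucial point will be to identify how $\Theta_i$, $\Theta_j$ act on it.

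The main technical step is therefore to compute the image of $\Sigma_{ij,b}$ under the remaining automorphism. Concretely, after applying $\Theta_i\Theta_j$ to $Y_{i,1}$ one obtains an expression involving $\Sigma_{j,\,\cdot}$, $\Sigma_{i,\,\cdot}$, and $\Sigma_{ij,\,\cdot}$; applying the final $\Theta_i$ requires knowing $\Theta_i(\Sigma_{ij,b})$, $\Theta_i(\Sigma_{j,b})$ (given in Section \ref{srel}), and $\Theta_i(\Sigma_{i,b}) = 1 - \Sigma_{i,b} = -A_{i,b}^{-1}\Sigma_{i,bq^{-2}}$ (the combined form of \eqref{oneminus}). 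To get $\Theta_i(\Sigma_{ij,b})$ I would apply $\Theta_i$ to both sides of the defining equation \eqref{qdiffs}, obtaining a new $q$-difference equation satisfied by $\Theta_i(\Sigma_{ij,b})$, verify that its right-hand side has the form $\Psi \cdot \tau_{q^{-2}}(\,\cdot\,)$ with $\Psi \in \wt G^w$ of weight in $\Delta$ (so that Lemma \ref{unique} applies and pins down the solution uniquely), and then guess the solution — by analogy with \eqref{oneminus}, it should be expressible through $\Sigma_{ij,b}$ and the $\Sigma_{i,\,\cdot}$'s — and check it satisfies the transformed equation. This ``guess and verify via Lemma \ref{unique}'' strategy is exactly the mechanism used in the proof of Proposition \ref{invol}, and I expect it to be the workhorse here as well.

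Once $\Theta_i(\Sigma_{ij,b})$ and $\Theta_j(\Sigma_{ij,b})$ are known, both $(\Theta_i\Theta_j\Theta_i)(Y_{i,1})$ and $(\Theta_j\Theta_i\Theta_j)(Y_{i,1})$ become explicit rational expressions in $Y_{i,1}$ and finitely many $A_{i,\,\cdot}$, $A_{j,\,\cdot}$, $\Sigma_{i,\,\cdot}$, $\Sigma_{j,\,\cdot}$, $\Sigma_{ij,\,\cdot}$; the two sides must then be shown equal. I would first reduce this to an identity not involving $Y_{i,1}$ by cancelling the obvious common weight-$\omega_i$ factor, and then clear denominators and use the $q$-difference relations \eqref{firstqe1}, \eqref{qdiffs}, and \eqref{vrel} repeatedly to reduce everything to a common normal form — or, more slickly, exhibit both sides as the unique solution in the appropriate $\wt{\mc Y}^w$ of one and the same $q$-difference equation, again invoking Lemma \ref{unique}. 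The main obstacle I anticipate is bookkeeping: keeping the spectral-parameter shifts (powers of $q$) straight across three successive applications of automorphisms that each shift arguments, and correctly matching up which component $\wt{\mc Y}^w$ each intermediate expression lives in (since whether $\Sigma^w$ equals $\Sigma^+$ or $\Sigma^-$ depends on the sign of $w(\alpha_i)$, and this sign changes as we pass from $w$ to $ws_i$ to $ws_is_j$ etc.). The rank-$2$ structure of $A_2$ keeps the number of roots small enough that this is manageable, but it is where the real work lies; the conceptual content is entirely in the uniqueness Lemma \ref{unique} plus the derived transformation formulas for $\Sigma_{ij,\,\cdot}$.
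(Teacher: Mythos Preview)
Your plan is correct and rests on the same mechanism as the paper's proof---transforming $q$-difference equations under $\Theta_i$ and invoking the uniqueness Lemma \ref{unique}---but the paper streamlines it in two ways. First, since $\Theta_j(Y_{i,a}) = Y_{i,a}$, the right-hand side collapses immediately to $(\Theta_j\Theta_i)(Y_{i,a}) = Y_{j,aq^{-3}}^{-1}\,\Sigma_{ji,aq^{-3}}/\Sigma_{ji,aq^{-1}}$, so the whole statement reduces to the single claim $\Theta_i(\Sigma_{ji,a}) = \Sigma_{ji,a}$; there is no need to compute two triple compositions and compare. Second, rather than applying $\Theta_i$ to the defining equation of $\Sigma_{ji}$ and guessing a solution of the resulting (messier) equation, the paper first proves (again via Lemma \ref{unique}) the bilinear identity $\Sigma_{i,a}\Sigma_{j,aq} = \Sigma_{ij,aq} + A_{j,aq}^{-1}\Sigma_{ji,a}$ and applies $\Theta_i$ to \emph{that}: the images of $\Sigma_{i,a}$, $\Sigma_{j,aq}$, and $\Sigma_{ij,aq}$ under $\Theta_i$ are already known (from \eqref{oneminus}, Section \ref{srel}, and Proposition \ref{invol}), so one solves linearly for $\Theta_i(\Sigma_{ji,a})$ and the same identity returns $\Sigma_{ji,a}$. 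Two small corrections to your writeup: by the conventions of Section \ref{srel} one has $\Theta_j(\Sigma_{i,b}) = \Sigma_{ji,b}/\Sigma_{j,bq^{-1}}$ (not $\Sigma_{ij,b}/\Sigma_{i,b}^{(j)}$), so it is $\Sigma_{ji}$ and its $\Theta_i$-invariance that matter; and your worry about tracking the components $\wt{\mathcal Y}^w$ and the signs of $w(\alpha_i)$ is unnecessary, since (as explained at the start of Section \ref{braidrel}) one may work with the combined equations in $\Pi$ throughout.
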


\begin{proof}
Since $\Theta_i(Y_{j,a}) = Y_{j,a}$, we need to prove that 
$$(\Theta_i\Theta_j\Theta_i)(Y_{i,a}) = (\Theta_j\Theta_i)(Y_{i,a}).$$

Equation (\ref{qdiffs}), with $i$ and $j$ exchanged, reads in this
case:
\begin{equation}    \label{qdiffs1}
\Sigma_{ji,a} = \Sigma_{j,aq^{-1}} + A_{i,a}^{-1} A_{j,aq^{-1}}^{-1}
\Sigma_{ji,aq^{-2}}.
\end{equation}
We will prove that 
\begin{equation}    \label{Sigmaji}
  \Theta_i(\Sigma_{ji,a}) = \Sigma_{ji,a},
\end{equation}
so that we have the following commutative diagram:

$$\begin{xymatrix}{ Y_{i,a} \ar@{|->}[rr]^{\Theta_i}\ar@{|->}[d]_{\Theta_j} &&  Y_{i,a}A_{i,aq^{-1}}^{-1}\frac{\Sigma_{i,aq^{-3}}}{\Sigma_{i,aq^{-1}}}\ar@{|->}[rr]^{\Theta_j} &&Y_{j,aq^{-3}}^{-1}\frac{\Sigma_{ji,aq^{-3}}}{\Sigma_{ji,aq^{-1}}}\ar@{|->}[d]^{\Theta_i}
    \\  Y_{i,a}\ar@{|->}[rr]_{\Theta_i} && Y_{i,a}A_{i,aq^{-1}}^{-1}\frac{\Sigma_{i,aq^{-3}}}{\Sigma_{i,aq^{-1}}} \ar@{|->}[rr]_{\Theta_j} &&Y_{j,aq^{-3}}^{-1}\frac{\Sigma_{ji,aq^{-3}}}{\Sigma_{ji,aq^{-1}}}}
\end{xymatrix}$$

First, Lemma \ref{unique} implies that
\begin{equation}\label{relij}\Sigma_{i,a}\Sigma_{j,aq} = \Sigma_{ij,aq} + A_{j,aq}^{-1}\Sigma_{ji,a}\end{equation}
because both sides of (\ref{relij}) satisfy the same $q$-difference
equation (for $U(a)$):
$$U(a) = A_{j,aq}^{-1}A_{i,a}^{-1} U(aq^{-2}) + (\Sigma_{i,a} + \Sigma_{j,aq} - 1).$$
Now, since $\Theta_i(\Sigma_{j,a}) =
\Sigma_{ij,a}\Sigma_{i,aq^{-1}}^{-1}$, applying $\Theta_i$ to
(\ref{relij}), we obtain
$$-A_{i,a}^{-1}\Sigma_{i,aq^{-2}}\Sigma_{ij,aq}\Sigma_{i,a}^{-1} = -\Sigma_{j,aq}A_{i,a}^{-1}\Sigma_{i,aq^{-2}} + A_{j,aq}^{-1}A_{i,a}^{-1}\Theta_i(\Sigma_{ji,a})\Sigma_{i,aq^{-2}}\Sigma_{i,a}^{-1}.$$
and so
$$
\Theta_i(\Sigma_{ji,a}) = A_{j,aq}(-\Sigma_{ij,aq} +
\Sigma_{j,aq}\Sigma_{i,a}) = \Sigma_{ji,a}.
$$
\end{proof}

\subsection{General types}\label{gtb}

The proof in the previous section is based on the $q$-difference
equations \eqref{qdiffs} and \eqref{qdiffs1}. We expect that there is
a similar proof for other types as well, but so far we have not been
able to find the relevant equations for types $B_2$ and $G_2$.

Hence we present here a different but uniform proof for the rank $2$
simple types. So, suppose that $\mathfrak{g}$ is of type $A_2$, $B_2$
or $G_2$.  The proof is based on the generalized Baxter $TQ$-relations
proved in \cite{FH}. To explain this, let us consider the extension
$$
\Pi' := \mathcal{Y}'\otimes_\mathcal{Y} \Pi
$$
of $\Pi$ where
$$
\mathcal{Y}' :=
\mathbb{Z}[\Psib_{{k},a}^{\pm 1}, [\omega]]_{k\in
    I,a\in\mathbb{C}^\times,\omega\in P} \supset \mathcal{Y}
$$
and we set
  \begin{equation}\label{ysub}
    Y_{k,a} =
  [\omega_{k}]\Psib_{{k},aq_{k}^{-1}}\Psib_{{k},aq_{k}}^{-1}.
\end{equation}
Thus, $\mathcal{Y}'$ is the ring of Laurent polynomials in the
algebraically independent variables  $\Psib_{{k},a}$ over the
ring $\mathbb{Z}[P]$.

The weight of a Laurent monomial $[\omega]\Psib_{k_1,a_1}^{p_1}\cdots
\Psib_{k_N,a_N}^{p_N}$ in $\mathcal{Y}'$ is defined to be $\omega$.

Like $\Pi$, the ring $\Pi'$ has components $\wt{\mathcal{Y}}'{}^w,
w\in W$, and the corresponding projections.

Next, we extend the operators $\Theta_k$ on $\Pi$ to the
operators $\Theta_k'$ on  $\Pi'$ given by the following
formulas: for $\omega\in P$ and $a\in\mathbb{C}^\times$
\begin{equation}    \label{Thetaprime}
\Theta_{k}'([\omega]) := [s_{k}(\omega)], \qquad
\Theta_{k}'(\Psib_{{m},a}) := 
\left\{ 
\begin{array}{lc}
\Psib_{{m},a} & \text{ if $m \neq k$}, \\[2mm]
\widetilde{\Psib}_{{k},aq_{k}^{-2}}\Sigma_{{k},aq_{k}^{-2}} & \mbox{ if $m = k$}.
\end{array}
\right.
\end{equation}
where $\wt{\Psib}_{{k},aq_{k}^{-2}}$, introduced in \cite{FH2}, is given
by the formula
\begin{multline}\label{psfh2}\wt{\Psib}_{{k},aq_{k}^{-2}} :=
    \Psib_{{k},aq_{k}^{-2}}^{-1} \cdot \\ \prod_{m\in I,C_{k,m} =
      -1}\Psib_{m,aq_k^{-1}} \cdot \prod_{m\in I, C_{k,m} =
  -2}\Psib_{m,aq^{-2}}\Psib_{m,a} \cdot \prod_{m\in I, C_{k,m} =
  -3}\Psib_{m,aq^{-3}}\Psib_{m,aq^{-1}}\Psib_{m,aq}.\end{multline}

These operators are well-defined and according to formula
(\ref{ysub}), they are compatible with the operators $\Theta_{k}$.

We set $I = \{ i,j \}$ with $d_i \geq d_j$. We are going to prove
that the operators $\Theta'_i$ and $\Theta'_j$ satisfy the braid
relations ${\mc R}'_{i,j,\ell} = {\mc R}'_{i,j,r}$, where 
$${\mc R}'_{i,j,\ell}
:= \Theta_j'\Theta_i'\Theta_j' \quad \text{
  and } \quad
{\mc R}'_{i,j,r} :=
\Theta_i'\Theta_j'\Theta_i'\text{ in type $A_2$,}$$
$${\mc R}'_{i,j,\ell}
:= \Theta_j'\Theta_i'\Theta_j'\Theta_i' \quad \text{
  and } \quad
{\mc R}'_{i,j,r} :=
\Theta_i'\Theta_j'\Theta_i'\Theta_j'\text{ in type $B_2$,}$$
$${\mc R}'_{i,j,\ell}
:= \Theta_j'\Theta_i'\Theta_j'\Theta_i'\Theta_j'\Theta_i' \quad \text{
  and } \quad
{\mc R}'_{i,j,r} :=
\Theta_i'\Theta_j'\Theta_i'\Theta_j'\Theta_i'\Theta_j'\text{ in type $G_2$.}$$
This will imply the sought-after braid relations.

The above relation is clearly satisfied on the elements $[\omega]$,
$\omega\in P$. Let us now check this relation on the elements
$\Psib_{i,a}$ and $\Psib_{j,a}$. Explicitly, formula
\eqref{Thetaprime} specializes to
$$
\Theta_i'(\Psib_{i,a}) =
\Psib_{i,aq^{-2d_i}}^{-1}\Psib_{j,aq^{-d_i}}\Sigma_{i,aq^{-2d_i}}, \qquad \Theta_i'(\Psib_{j,a}) = \Psib_{j,a},\qquad \Theta_j'(\Psib_{i,a}) =
\Psib_{i,a},$$ 
$$\Theta_j'(\Psib_{j,a}) = \Psib_{j,aq^{-2}}^{-1}\Sigma_{j,aq^{-2}}\times \begin{cases}
\Psib_{i,aq^{-1}}\text{ in type $A_2$,}
\\\Psib_{i,aq^{-2}}\Psib_{i,a}\text{ in type $B_2$,}
\\\Psib_{i,aq^{-3}}\Psib_{i,aq^{-1}}\Psib_{i,aq}\text{ in type $G_2$.}\end{cases}
$$

Introduce the following notation for $a\in\mathbb{C}^\times$ : 
\begin{equation}    \label{philr}
\phi^\ell_{i,a} := {\mc R}'_{i,j,\ell}(\Psib_{i,a}) \text{ , }
\phi^\ell_{j,a} := {\mc
  R}'_{i,j,\ell}(\Psib_{j,a}) \text{ , } \phi^r_{i,a} := {\mc
  R}'_{i,j,r}(\Psib_{i,a}) \text{ , } \phi^r_{j,a} := {\mc
  R}'_{i,j,r}(\Psib_{j,a}).
\end{equation}
It remains to prove that
\begin{equation}    \label{remains}
\phi^\ell_{i,a} = \phi^r_{i,a} \qquad \text{and} \qquad 
\phi^\ell_{j,a} = \phi^r_{j,a}.
\end{equation}

 We will consider the projections onto
  $\wt{\mathcal{Y}}'{}^e$ (the proof for the other components
  $\wt{\mathcal{Y}}'{}^e, w \in W$, is similar).

Consider the sets of words 
$$W_i := \{(i), (ji), (iji)\},\qquad W_j := \{(j), (ij), (jij)\}\text{ in type $A_2$,}$$ 
$$W_j := \{(j), (ij), (jij), (ijij)\},\qquad W_i := \{(i), (ji), (iji), (jiji)\}\text{ in type $B_2$,}$$ 
$$W_j := \{(j), (ij), (jij), (ijij), (jijij),(ijijij)\},$$
$$W_i := \{(i), (ji), (iji), (jiji), (ijiji),(jijiji)\}\text{ in type $G_2$.}$$
Then
$\Theta_w'$, written as the product of the $\Theta'_i$ and
  $\Theta'_j$ according to the sequence $w$, is well-defined for any such
word $w$. Note that we have ${\mc R}'_{i,j,\ell} =
\Theta'_{w_j}$ and ${\mc R}'_{i,j,r} = \Theta'_{w_i}$, where $w_j$ (resp. $w_i$) is the longest word in $W_j$ (resp. $W_i$).

We introduce intermediate elements $\Sigma_{w,a}\in \Pi$ for each $w\in W_i\cup W_j$ and $a\in\mathbb{C}^*$. 
For $w$ of length $1$ or $2$, $\Sigma_{w,a}$ has been defined above.

In type $A_2$, we define $\Sigma_{iji,a}$ and $\Sigma_{jij,a}$ by the formulas 
$$\Theta_i(\Sigma_{ji,a}) = \Sigma_{iji,a},\qquad 
\Theta_j(\Sigma_{ij,a}) = \Sigma_{jij,a}.$$

In type $B_2$, we define $\Sigma_{iji,a}$, $\Sigma_{jiji,a}$, $\Sigma_{jij,a}$, and $\Sigma_{ijij,a}$ by the formulas 
$$\Theta_i(\Sigma_{ji,a}) = \frac{\Sigma_{iji,a}}{\Sigma_{i,aq^{-2}}},\qquad \Theta_j(\Sigma_{iji,a}) = \Sigma_{jiji,a},\qquad 
\Theta_j(\Sigma_{ij,a}) =
\frac{\Sigma_{jij,a}}{\Sigma_{j,aq^{-4}}},\qquad
\Theta_i(\Sigma_{jij,a}) = \Sigma_{ijij,a}.$$

In type $G_2$, we define $\Sigma_{iji,a}$, $\Sigma_{jiji,a}$,
$\Sigma_{ijiji,a}$, and $\Sigma_{jijiji,a}$ by the formulas 
$$\Theta_i(\Sigma_{ji,a}) =
\frac{\Sigma_{iji,a}}{\Sigma_{i,aq^{-4}}\Sigma_{i,aq^{-2}}} \qquad
\Theta_j(\Sigma_{iji,a}) = \frac{\Sigma_{jiji,a}}{\Sigma_{j,aq^{-3}}},$$
$$\Theta_i(\Sigma_{jiji,a}) = \frac{\Sigma_{ijiji,a}}{\Sigma_{i,aq^{-6}}}, \qquad \Theta_j(\Sigma_{ijiji,a}) = \Sigma_{jijiji,a},$$
and we define $\Sigma_{jij,a}$, $\Sigma_{ijij,a}$, $\Sigma_{jijij,a}$, and $\Sigma_{ijijij,a}$ by the formulas 
$$\Theta_j(\Sigma_{ij,a}) = \frac{\Sigma_{jij,a}}{\Sigma_{j,aq^{-4}}\Sigma_{j,aq^{-6}}}\text{ , }
\Theta_i(\Sigma_{jij,a}) = \frac{\Sigma_{ijij,a}}{\Sigma_{i,aq^{-7}}\Sigma_{i,aq^{-9}}\Sigma_{i,aq^{-11}}},$$
$$\Theta_j(\Sigma_{ijij,a}) = \frac{\Sigma_{jijij,a}}{\Sigma_{j,aq^{-10}}}\text{ , }\Theta_i(\Sigma_{jijiji,a}) 
= \Sigma_{ijijiji,a}.$$

It follows from the above definitions that for each $w\in W_i$
(resp. $w\in W_j$), $\Theta_w'(\Psib_{i,a})\Sigma_{w,aq^{-2d_i}}^{-1}$
(resp. $\Theta_w'(\Psib_{j,a})\Sigma_{w,aq^{-2}}^{-1}$) is a Laurent
monomial in the $\Psib_{k,b}^{\pm 1}; k=i,j$.

In type $A_2$, the family $\Sigma_{ji,a}$ satisfies the $q^2$-difference equation
$$\Sigma_{ji,a} = \Sigma_{j,aq^{-1}} +
A_{i,a}^{-1}A_{j,aq^{-1}}^{-1}\Sigma_{ji,aq^{-2}}.$$
The family $\Sigma_{ij,a}$ satisfies the $q^2$-difference equation
$$\Sigma_{ij,a} = \Sigma_{i,aq^{-1}} +
A_{j,a}^{-1}A_{i,aq^{-1}}^{-1} \Sigma_{ij,aq^{-2}}.$$

Therefore, we have the following $q^2$-difference equations
$$\Sigma_{iji,a} \Sigma_{i,aq^{-2}} =
  \Sigma_{ij,aq^{-1}} +
  A_{j,aq^{-1}}^{-1}\Sigma_{i,a}
  \Sigma_{iji,aq^{-2}},$$
	$$\Sigma_{jij,a} \Sigma_{j,aq^{-2}}=
\Sigma_{ji,aq^{-1}} +
A_{i,aq^{-1}}^{-1}\Sigma_{j,a}\Sigma_{jij,aq^{-2}}.$$

In type $B_2$, the family $\Sigma_{ji,a}$ satisfies the $q^4$-difference equation
\begin{equation}    \label{Sigij2}
\Sigma_{ji,a} = \Sigma_{j,a} +
A_{i,a}^{-1}A_{j,aq^{-2}}^{-1}A_{j,a}^{-1}\Sigma_{ji,aq^{-4}}.
\end{equation}
The family $\Sigma_{ij,a}$ satisfies the $q^2$-difference equation
\begin{equation}    \label{Sigij1}
\Sigma_{ij,a} = \Sigma_{i,aq^{-2}}\Sigma_{i,aq^{-4}} +
A_{j,a}^{-1}A_{i,aq^{-2}}^{-1} \Sigma_{ij,aq^{-2}}.
\end{equation}

Therefore, we have the following $q^4$-difference equations
\begin{equation}\label{qde5}\Sigma_{iji,a} \Sigma_{i,aq^{-4}} =
  \Sigma_{ij,a} +
  A_{i,aq^{-2}}^{-1}A_{j,aq^{-2}}^{-1}A_{j,a}^{-1}\Sigma_{i,a}
  \Sigma_{iji,aq^{-4}},\end{equation}
$$\Sigma_{jiji,a}\Sigma_{ji,aq^{-4}} = \Sigma_{jij,a} + A_{i,aq^{-2}}^{-1}\Sigma_{ji,a}\Sigma_{jiji,aq^{-4}},$$
and the following $q^2$-difference equations
\begin{equation}    \label{Sigiji}
\Sigma_{jij,a} \Sigma_{j,aq^{-2}}=
\Sigma_{ji,aq^{-2}}\Sigma_{ji,aq^{-4}} +
A_{i,aq^{-2}}^{-1}A_{j,aq^{-4}}^{-1}\Sigma_{j,a}\Sigma_{jij,aq^{-2}},
\end{equation}
$$\Sigma_{ijij,a} \Sigma_{ij,aq^{-2}}=
\Sigma_{iji,aq^{-2}}\Sigma_{iji,aq^{-4}} + A_{j,aq^{-4}}^{-1}\Sigma_{ij,a}\Sigma_{ijij,aq^{-2}}.$$

In type $G_2$, the family $\Sigma_{ji,a}$ satisfies the $q^6$-difference equation
\begin{equation}\label{gqdji}\Sigma_{ji,a} = \Sigma_{j,aq} + A_{i,a}^{-1}A_{j,aq}^{-1}A_{j,aq^{-1}}^{-1}A_{j,aq^{-3}}^{-1}\Sigma_{ji,aq^{-6}},\end{equation}
The family $\Sigma_{ij,a}$ satisfies the $q^2$-difference equation
\begin{equation}\label{gqdij}\Sigma_{ij,a} = \Sigma_{i,aq^{-3}}\Sigma_{i,aq^{-5}}\Sigma_{i,aq^{-7}} + A_{j,a}^{-1}A_{i,aq^{-3}}^{-1}\Sigma_{ij,aq^{-2}},\end{equation}

Therefore, we have the following $q^6$-difference equations
\begin{equation}\label{gqdiji}\Sigma_{iji,a}\Sigma_{i,aq^{-6}} = \Sigma_{ij,aq} 
+ A_{i,aq^{-4}}^{-1}A_{i,aq^{-2}}^{-1}A_{j,aq^{-3}}^{-1}A_{j,aq^{-1}}^{-1}A_{j,aq}^{-1}\Sigma_{iji,aq^{-6}}\Sigma_{i,a},\end{equation}
\begin{equation}\label{gqdjiji}\Sigma_{jiji,a}\Sigma_{ji,aq^{-6}} = \Sigma_{jij,aq} 
+ A_{i,aq^{-4}}^{-1}A_{i,aq^{-2}}^{-1}A_{j,aq^{-7}}^{-1}A_{j,aq^{-5}}^{-1}A_{j,aq^{-3}}^{-1}\Sigma_{jiji,aq^{-6}}\Sigma_{ji,a},\end{equation}
\begin{equation}\label{gqdijiji}\Sigma_{ijiji,a}\Sigma_{iji,aq^{-6}} = \Sigma_{ijij,aq} 
+ A_{i,aq^{-6}}^{-1}A_{j,aq^{-7}}^{-1}A_{j,aq^{-5}}^{-1}A_{j,aq^{-3}}^{-1}\Sigma_{ijiji,aq^{-6}}\Sigma_{iji,a},\end{equation}
\begin{equation}\label{gqdjijiji}\Sigma_{jijiji,a}\Sigma_{jiji,aq^{-6}} = \Sigma_{jijij,aq} 
+ A_{i,aq^{-6}}^{-1}\Sigma_{jijiji,aq^{-6}}\Sigma_{jiji,a},\end{equation}
and the following $q^2$-difference equations
\begin{equation}\label{gqdjij}\Sigma_{jij,a}\Sigma_{j,aq^{-2}} = \Sigma_{ji,aq^{-3}}\Sigma_{ji,aq^{-5}} \Sigma_{ji,aq^{-7}} 
+ A_{j,aq^{-4}}^{-1}A_{i,aq^{-3}}^{-1}A_{j,aq^{-6}}^{-1}\Sigma_{jij,aq^{-2}}\Sigma_{j,a},\end{equation}
\begin{equation}\label{gqdijij}\Sigma_{ijij,a}\Sigma_{ij,aq^{-2}} = \Sigma_{iji,aq^{-3}}\Sigma_{iji,aq^{-5}} \Sigma_{iji,aq^{-7}} 
+ A_{j,aq^{-4}}^{-1}A_{i,aq^{-7}}^{-1}A_{j,aq^{-6}}^{-1}\Sigma_{ijij,aq^{-2}}\Sigma_{ij,a},\end{equation}
\begin{equation}\label{gqdjijij}\Sigma_{jijij,a}\Sigma_{jij,aq^{-2}} = \Sigma_{jiji,aq^{-3}}\Sigma_{jiji,aq^{-5}} \Sigma_{jiji,aq^{-7}} 
+ A_{j,aq^{-10}}^{-1}A_{i,aq^{-7}}^{-1}\Sigma_{jijij,aq^{-2}}\Sigma_{jij,a},\end{equation}
\begin{equation}\label{gqdijijij}\Sigma_{ijijij,a}\Sigma_{ijij,aq^{-2}} = \Sigma_{ijiji,aq^{-3}}\Sigma_{ijiji,aq^{-5}} \Sigma_{ijiji,aq^{-7}} 
+ A_{j,aq^{-10}}^{-1}\Sigma_{ijijij,aq^{-2}}\Sigma_{ijij,a}.\end{equation}

In all of these equations, the first factor in the second term
on the right-hand side is a monomial in $A_{k,c}^{-1}$ whose weight is a
negative root. This implies, by induction on the length of $w$, that
  $$
  \Sigma_{w,a} \in \Z[[A_{k,c}^{-1}]]^{(1)}_{k \in I, c \in \C^\times}
  $$
where $\Z[[A_{k,c}^{-1}]]^{(1)}_{k \in I, c \in \C^\times}$ consists of
all elements of $\Z[[A_{k,c}^{-1}]]_{k \in I, c \in \C^\times}$ whose
highest weight term is equal to $1$. Recalling formulas
\eqref{philr}, we obtain the following result.

\begin{lem}    \label{Laur}
  We have, for $k=i,j$,
  $$
  \phi^\ell_{k,a} \in \phi^\ell_{k,a,0} \cdot \Z[[A_{k,c}^{-1}]]^{(1)}_{k \in I, c
    \in \C^\times}, \qquad \phi^r_{k,a} \in \phi^r_{k,a,0} \cdot
  \Z[[A_{k,c}^{-1}]]^{(1)}_{k \in I, c \in \C^\times},
  $$
  where $\phi^\ell_{k,a,0}$ and $\phi^r_{k,a,0}$ are monomials
  in $\Psib_{i,b}^{\pm 1}$ and $\Psib_{j,b}^{\pm 1}$.
\end{lem}

We will prove that the highest weight terms $\phi^\ell_{k,a,0}$ and
$\phi^r_{k,a,0}$ are equal to each other for $k=i,j$. We will then use
this result to prove that $\phi^\ell_{k,a} = \phi^r_{k,a}$ for $k =
i,j$.

Consider the following elements of $\Yim \subset \Pi$: in type $A_2$
$$T_{i,a} = Y_{i,a} + Y_{i,aq^2}^{-1}Y_{j,aq} + Y_{j,aq^3}^{-1},\qquad T_{j,a} = Y_{j,a} + Y_{j,aq^2}^{-1}Y_{i,aq} + Y_{i,aq^3}^{-1},$$
in type $B_2$
$$T_{i,a} = Y_{i,a} + Y_{i,aq^4}^{-1}Y_{j,aq}Y_{j,aq^3} + Y_{j,aq}Y_{j,aq^5}^{-1} + Y_{j,aq^3}^{-1}Y_{j,aq^5}^{-1}Y_{i,aq^2} + Y_{i,aq^6}^{-1},$$
$$T_{j,a} = Y_{j,a} + Y_{j,aq^2}^{-1}Y_{i,aq} + Y_{i,aq^5}^{-1} Y_{j,aq^4} + Y_{j,aq^6}^{-1},$$
and in type $G_2$
\begin{equation}\label{bi}
T_{i,a} = Y_{i,a} + Y_{i,aq^6}^{-1}Y_{j,aq}Y_{j,aq^3}Y_{j,aq^5} + Y_{j,aq}Y_{j,aq^3}Y_{j,aq^7}^{-1} + Y_{i,aq^4}Y_{j,aq}Y_{j,aq^5}^{-1}Y_{j,aq^7}^{-1}\end{equation}
$$+Y_{i,aq^2}Y_{i,aq^4}Y_{j,aq^3}^{-1}Y_{j,aq^5}^{-1}Y_{j,aq^7}^{-1} + Y_{i,aq^{10}}^{-1}Y_{j,aq}Y_{j,aq^9} + Y_{i,aq^2}Y_{i,aq^{10}}^{-1}Y_{j,aq^3}^{-1}Y_{j,aq^9}$$ 
$$+ Y_{i,aq^8}^{-1}Y_{i,aq^4} + Y_{i,aq^8}^{-1}Y_{i,aq^{10}}^{-1}Y_{j,aq^5}Y_{j,aq^7}Y_{j,aq^9} + Y_{i,aq^8}^{-1}Y_{j,aq^5}Y_{j,aq^7}Y_{j,aq^{11}}^{-1} $$
$$+ Y_{j,aq^5}Y_{j,aq^9}^{-1}Y_{j,aq^{11}}^{-1} + Y_{i,aq^6}Y_{j,aq^7}^{-1}Y_{j,aq^9}^{-1}Y_{j,aq^{11}}^{-1}+ Y_{i,aq^{12}}^{-1},$$
\begin{equation}\label{bj}T_{j,a} = Y_{j,a} + Y_{j,aq^2}^{-1}Y_{i,aq}
  + Y_{i,aq^7}^{-1}Y_{j,aq^4}Y_{j,aq^6} +
  Y_{j,aq^4}Y_{j,aq^8}^{-1} \end{equation}
$$+ Y_{j,aq^6}^{-1}Y_{j,aq^8}^{-1}Y_{i,aq^5} 
+ Y_{i,aq^{11}}^{-1}Y_{j,aq^{10}} + Y_{j,aq^{12}}^{-1}.$$

\begin{lem}    \label{invar}
The elements $T_{i,a}$ and $T_{j,a}$ are invariant under the action of
$\Theta_i$ and $\Theta_j$.
\end{lem}

\begin{proof}
The elements $T_{i,a}$ and $T_{j,a}$ are the $q$-characters of the
$i$th and $j$th fundamental representations of $U_q(\wh{\mathfrak{g}})$
(recall that we denote the two nodes of the Dynkin diagram of $\mathfrak{g}$ by
$i$ and $j$). This follows from the algorithm of \cite{Fre2} (see
\cite[Section 8.4]{H}).
We prove their invariance under $\Theta_i$ and $\Theta_j$ following
the argument of Theorem \ref{invariants} below. Namely, by
construction (see \cite[Corollary 5.7]{Fre2}), each of them can be
written as a polynomial in
$$Y_{i,a}(1 + A_{i,aq_i}^{-1}) \quad \on{and} \quad Y_{j,a}^{\pm 1}$$
for various $a\in\mathbb{C}^\times$. We have
$\Theta_i(Y_{j,a}^{\pm 1}) = Y_{j,a}^{\pm 1}$ by definition,
and since $Y_{i,a}(1 + A_{i,aq_i}^{-1}) = T^{(1)}_{i,a}$ (see formula
\eqref{Tiak}), Proposition \ref{fixed} implies that
$$\Theta_i(Y_{i,a}(1 + A_{i,aq_i}^{-1})) = Y_{i,a}(1 +
A_{i,aq_i}^{-1}).$$
The proof of invariance under $\Theta_j$ is similar, and this
completes the proof.
\end{proof}

Next, recall the generalized Baxter $TQ$-relations proved in
\cite{FH}. These relations appear in the Grothendieck ring of the
category ${\mc O}$ introduced in
\cite{HJ}, which is an extension of the category of finite-dimensional
representations of $U_q(\ghat)$. Our proof below is based
  on a combinatorial study of the images of the $TQ$-relations under
  the $q$-character homomorphism; we will not use any information
  about representations from the category ${\mc O}$. They are as follows:
$$T_{k,a} = \mathcal{S}_k(([\omega_{i}],[\omega_{j}],
\Psib_{i,b},\Psib_{j,b})_{b\in\mathbb{C}^\times}), \qquad k=i,j,$$
where the right hand side is obtained by substituting the right hand
side of formula \eqref{ysub} for $Y_{i,a}$ and $Y_{j,a}$ in the 
defining formulas for $T_{i,a}$ and $T_{j,a}$.

Let us apply ${\mc R}'_{i,j,\ell}$ and ${\mc R}'_{i,j,r}$ to these
equations. By Lemma \ref{invar}, $T_{i,a}$ and $T_{j,a}$ are invariant
under ${\mc R}'_{i,j,\ell}$ and ${\mc R}'_{i,j,r}$, and we have ${\mc
  R}'_{i,j,\ell}([\omega_k]) = {\mc R}'_{i,j,r}([\omega_k]) =
[-\omega_{\overline{k}}]$ for $k = i,j$, where $\overline{k} = k$ in types $B_2$, $G_2$ and $\overline{k}\neq k$ in type $A_2$. Hence we obtain the following system of two
equations in $\Pi$ on $\phi^{\ell /r}_{i,b}$ and $\phi^{\ell /r}_{j,b}$:
\begin{equation}    \label{Tia}
\mathcal{S}_i(([\omega_i],[\omega_j],
\Psib_{i,b},\Psib_{j,b})_{b\in\mathbb{C}^\times}) =
\mathcal{S}_i(([-\omega_{\overline{i}}],[-\omega_{\overline{j}}],
\phi^{\ell /r}_{i,b},\phi^{\ell /r}_{j,b})_{b\in\mathbb{C}^\times}),
\end{equation}
\begin{equation}    \label{Tja}
\mathcal{S}_j(([\omega_i],[\omega_j],
\Psib_{i,b},\Psib_{j,b})_{b\in\mathbb{C}^\times}) =
\mathcal{S}_j(([-\omega_{\overline{i}}],[-\omega_{\overline{j}}],
\phi^{\ell /r}_{i,b},\phi^{\ell /r}_{j,b})_{b\in\mathbb{C}^\times}),
\end{equation}
where notation $\ell /r$ means that the formula is valid if we put
$\ell$ everywhere or $r$ everywhere. We are going to derive from this
system the desired equalities \eqref{remains}.

Recall that we are considering the projections onto
$\wt{\mathcal{Y}}'{}^e$. Let $D = 3$ in type $A_2$ (resp $D = 6$ in
type $B_2$ and $D = 12$ in type $G_2$).

It is clear that only the highest weight
terms of
$$
{\mc R}'_{i,j,\ell /r}(Y_{\overline{i},aq^{D}}^{-1}) =
[\omega_i]\phi^{\ell /r}_{\overline{i},aq^{D  +
    d_{\overline{i}}}}(\phi^{\ell /r}_{\overline{i},aq^{D - d_{\overline{i}}}})^{-1} 
$$
contribute to the
highest weight term on the right hand side of \eqref{Tia}. But the
highest weight term on the left hand side of \eqref{Tia} is $Y_{i,a} =
[\omega_i]\Psib_{i,aq^{-d_i}}\Psib_{i,aq^{d_i}}^{-1}$. Therefore, the
highest weight terms of $\phi^\ell_{\overline{i},a}$ and $\phi^r_{\overline{i},a}$ are both
equal to $\Psib_{i,aq^{-D}}^{-1}$.

Using equation \eqref{Tja} in the same way, we find that the highest
weight terms of $\phi^\ell_{\overline{j},a}$ and $\phi^r_{\overline{j},a}$ are both equal to
$\Psib_{j,aq^{-D}}^{-1}$.

Combining this with Lemma \ref{Laur}, we obtain the following result.

\begin{lem}    \label{Laur1}
We have
\begin{equation}    \label{assum}
\phi^{\ell /r}_{i,b} 
\in \Psib_{\overline{i},bq^{-D}}^{-1} \cdot \mathbb{Z}[[A_{k,c}^{-1}]]^{(1)}_{k\in I,
  c\in\mathbb{C}^\times} \qquad
\phi^{\ell /r}_{j,b} \in
  \Psib_{\overline{j},bq^{-D}}^{-1} \cdot \mathbb{Z}[[A_{k,c}^{-1}]]^{(1)}_{k\in I,
    c\in\mathbb{C}^\times}.
\end{equation}
\end{lem}

We can now prove the equalities \eqref{remains}.

\begin{lem}
There is a unique solution $(\phi^{\ell /r}_{i,b},\phi^{\ell
  /r}_{j,b})$ of the system
\eqref{Tia}, \eqref{Tja} in $\wt{\mathcal{Y}}'{}^e$ of
  the form \eqref{assum}.
\end{lem}

\begin{proof} We have already shown the existence of such
  solutions in Lemma \ref{Laur1}. Now we prove uniqueness.
To simplify our notation, in this proof we will write $\phi_{i,b}$
(resp. $\phi_{j,b}$) for $\phi^{\ell
  /r}_{i,b}$ (resp. $\phi^{\ell /r}_{j,b}$).

By induction on descending weights, we can show that all lower weight
terms in $\phi_{i,a}$ and $\phi_{i,a}$ are uniquely determined by
equations \eqref{Tia} and \eqref{Tja}. 
Indeed, let us expand
$$\phi_{i,b} =  \sum_{m\geq 0} \phi_{i,b,m} \qquad
\phi_{j,b} = \sum_{m\geq 0} \phi_{j,b,m},$$ where $\phi_{i,b,0} =
\Psib_{\overline{i},bq^{-D}}^{-1}, \phi_{j,b,0} =
\Psib_{\overline{j},bq^{-D}}^{-1}$, and for $m>0$, each expression
$\phi_{i,b,m}$ (resp. $\phi_{j,b,m}$) is a finite linear combination
of terms equal to the product of
$\Psib_{\overline{i},bq^{-D}}^{-1}$ (resp. $\Psib_{\overline{j},bq^{-D}}^{-1}$)
and a monomial of the form
$A_{k_1,c_1}^{-1} \ldots A_{k_m,c_m}^{-1}$; note that the weight of such a term
is equal to the sum of the corresponding $m$ negative simple
roots: $-\sum_{a=1}^m \al_{k_a}$. We will prove uniqueness of
$\phi_{i,b,m}$, $\phi_{j,b,m}$ by induction on $m\geq 0$.

The terms $\phi_{i,b,0}$ and $\phi_{j,b,0}$ corresponding
  to $m=0$ are fixed by formula \eqref{assum} (which is
  our assumption in the present lemma); namely, they
  are equal to $\Psib_{\overline{i},bq^{-D}}^{-1}$ and $\Psib_{\overline{j},bq^{-D}}^{-1}$,
  respectively. Suppose now
that we have found the terms $\phi_{i,b,m'}$ and $\phi_{j,b,m'}$ with
$m'<m$. Consider the part of equation \eqref{Tia} comprising all terms
whose weights are of the form $\omega_i - \gamma$, where $\gamma$ is a
sum of $m$ simple roots. It is clear that only $\phi_{i,b,m'},
\phi_{j,b,m'}$ with $m' \leq m$ can contribute to this part of
equation \eqref{Tia}.

By computing the weights of these terms, we find that the
  contribution of $\phi_{i,b,m}, \phi_{j,b,m}$ to this part of
  equation \eqref{Tia} comes exclusively from ${\mc R}'_{i,j,\ell
    /r}(Y_{\overline{i},aq^{D}}^{-1}) =
  [\omega_i]\phi_{\overline{i},aq^{D +
      d_{\overline{i}}}}\phi_{\overline{i},aq^{D  -
        d_{\overline{i}}}}^{-1}$, and hence this
  contribution is equal to $[\omega_i](\phi_{\overline{i},aq^{D + d_{\overline{i},m}}} -
  \phi_{\overline{i},aq^{D - d_{\overline{i}}},m})$.

Therefore, we obtain from equation \eqref{Tia} that
$[\omega_i](\phi_{\overline{i},aq^{D + d_{\overline{i}},m} -
\phi_{\overline{i},aq^{D - d_{\overline{i}}},m}})$ can be expressed in terms of
$\phi_{i,b,m'}, \phi_{j,b,m'}$ with $m' < m$. This implies that
$\phi_{\overline{i},b,m}$ is uniquely determined by $\phi_{i,b,m'},
\phi_{j,b,m'}$ with $m'< m$. Indeed, if there were two
  solutions $\phi^{(1)}_{\overline{i},b,m}$ and $\phi^{(2)}_{\overline{i},b,m}$, then their
  difference $\wt\phi_{\overline{i},b,m}$ would satisfy the corresponding homogeneous
equation $[\omega_i](\wt\phi_{\overline{i},aq^{D + d_{\overline{i}}},m} - \wt\phi_{\overline{i},aq^{D - d_{\overline{i}}},m}) =
0$. This equation implies that $\wt\phi_{\overline{i},b,m}$
does not depend on $b$ and hence is in $\Z[[\omega]]_{\omega \in P}$.
But according to formula \eqref{assum},
$\wt\phi_{\overline{i},b,m}$ is a linear combination of monomials
of the form $\Psib_{\overline{i},bq^{-D}}^{-1} A_{k_1,c_1}^{-1}\cdots
A_{k_m,c_m}^{-1}, m>0$. Therefore, we obtain that $\wt\phi_{\overline{i},b,m}=0$, and
so $\phi_{\overline{i},b,m}$ is uniquely determined by $\phi_{i,b,m'},
\phi_{j,b,m'}$ with $m'< m$.

Applying the same argument to the corresponding part of
  equation \eqref{Tja}, we find that $\phi_{\overline{j},b,m}$ is also uniquely
  determined by $\phi_{i,b,m'}, \phi_{j,b,m'}$ with $m'< m$. This
  completes the inductive step and hence the proof.
\end{proof}

This lemma implies the equalities \eqref{remains}. This completes the
proof of the braid relation.

\section{Subring of $W$-invariants, $q$-characters, and screening
  operators}\label{screenings}

In this section we prove that the subring of the invariants in $\Yim$
of the action of the Weyl group on $\Pi$ coincides with the ring of
$q$-characters inside $\Yim$, which is isomorphic to the Grothendieck
ring of the category of finite-dimensional representations of
$\U_q(\ghat)$. For that, we relate the operators $\Theta_i$ to the
screening operators constructed in \cite{Fre} and use the results of
\cite{Fre2} that the subring of invariants of the screening
operators in $\Yim$ coincides with the ring of $q$-characters.

\subsection{Quantum affine algebras and their finite-dimensional
  representations}    \label{rep}

In this section we collect some definitions and results on quantum
affine algebras and their representations. We refer the reader to
\cite{CP} for a canonical introduction, and to \cite{CH,L} for more
recent surveys on this topic.

Let $\wh{\Glie}$ be the Kac--Moody Lie algebra of untwisted affine
type associated to $\Glie$.  The quantum affine algebra
$U_q(\wh{\Glie})$ is a Hopf algebra over $\C$ defined in
  terms of the Drinfeld--Jimbo generators $e_i,\ f_i,\ k_i^{\pm1}$
  ($0\le i\le n$); see, e.g., \cite[Section 2.1]{FH}.
 We are interested in its level 0 quotient, which we
  denote by $\U_q(\wh{\Glie})$. It has a presentation \cite{Dri2, bec,
    da} in terms of the Drinfeld generators
\begin{align*}
  x_{i,r}^{\pm}\ (i\in I, r\in\Z), \quad \phi_{i,\pm m}^\pm\ (i\in I,
  m\geq 0), \quad k_i^{\pm 1}\ (i\in I).
\end{align*}
We will use the generating series $(i\in I)$: 
$$\phi_i^\pm(z) = \sum_{m\geq 0}\phi_{i,\pm m}^\pm z^{\pm m} =
k_i^{\pm 1}\text{exp}\left(\pm (q_i - q_i^{-1})\sum_{m > 0} h_{i,\pm
    m} z^{\pm m} \right).$$
The algebra $\U_q(\wh{\Glie})$ has a $\ZZ$-grading defined by
$\on{deg}(x_{i,m}^\pm) = \on{deg}(\phi_{i,m}^\pm) = m$ for $i\in I$,
  $m\in\ZZ$, and $\on{deg}(k_i^{\pm 1}) = 0$ for $i\in I$.
For $a\in\CC^\times$, there is a corresponding automorphism 
$\tau_a$ of $\U_q(\wh{\Glie})$ so that for an element $g$ of degree
$m\in\ZZ$ satisfies $\tau_a(g) = a^m g$.

For $i\in I$, the action of $k_i$ on any object of the category $\mathcal{F}$ of finite-dimensional representations of $\U_q(\wh{\Glie})$ is diagonalizable with eigenvalues in $\pm q^{d_i\ZZ}$.
Without loss of generality, we can assume that $\mathcal{F}$ is the category of {\it type 1} finite-dimensional representations (see \cite{CP}), i.e. we assume that for any object of $\mathcal{F}$, the eigenvalues of $k_i$ are in $q^{d_i \ZZ}$ for $i\in I$. 
The simple objects of $\mathcal{F}$ have been classified by
Chari--Pressley (see \cite{CP}). The simple objects are parametrized
by $n$-tuples of polynomials $(P_i(u))_{i\in I}$ satisfying $P_i(0) =
1$ (they are called {\it Drinfeld polynomials}).

For $\omega\in P$, the {\it weight space} $V_\omega$ of an object $V$ in $\mathcal{F}$ is the set of {\it weight vectors} of weight $\omega$, i.e. of vectors $v\in V$ satisfying $k_i v = q^{\left(d_i \omega(\alpha_i^\vee)\right)} v$ for any $i\in I$. Thus, we have the weight space decomposition 
$$V = \bigoplus_{\omega \in P} V_\omega.$$ 
Let us define the $\ell$-weight decomposition which is its refinement.

Since we consider representations of the level-zero quotient
$\U_q(\wh{\Glie})$,
the Drinfeld-Cartan elements $h_{i,r}$, $i\in I$,
$r\in\mathbb{Z}\setminus \{0\}$, commute on any object $V$ of
$\mathcal{F}$. Since the elements $h_{i,r}$ also commute with the
elements $k_i$, $i\in
I$, every object in $\mathcal{F}$ can be decomposed as a direct sum of
generalized eigenspaces of the $h_{i,r}$ and $k_i$.

More precisely, it follows from
the Frenkel--Reshetikhin theory of $q$-characters \cite{Fre} that the eigenvalues of the $h_{i,r}$ and $k_i$ can be {\it encoded} by {\it monomials} $m\in\mathcal{M}$.  Given $m\in\mathcal{M}$ and an object $V$ in $\mathcal{F}$, let $V_m$ be the corresponding generalized eigenspace of $V$ (also called {\it $\ell$-weight spaces}); thus, 
$$V = \bigoplus_{m\in\mathcal{M}} V_m.$$  
If $v\in V_m$, then $v$ is a weight vector of weight $\omega(m)\in P$.

The {\it $q$-character homomorphism} is an injective ring homomorphism
$$\chi_q : \text{Rep}(\U_q(\Glie)) \rightarrow \Yim \text{ , } \qquad
\chi_q(V) = \sum_{m\in \mathcal{M}} \text{dim}(V_m) m.$$
If $V_m\neq \{0\}$ we say that $m$ is an {\it $\ell$-weight of $V$}.

A monomial $m\in\mathcal{M}$ is said to be {\it dominant} if
$u_{i,a}(m)\geq 0$ for any $i\in I, a\in\CC^\times$. Given a simple
object $V$ in $\mathcal{F}$, let $M(V)$ be the {\it highest weight
  monomial} of $\chi_q(V)$, i.e. such that $\omega(M(V))$ is maximal
for the partial ordering on $P$. It is known that $M(V)$ is dominant
and characterizes the isomorphism class of $V$ (it is, in fact,
equivalent to the data of the Drinfeld polynomials). Conversely, to a
dominant monomial $M$ is associated a unique (up to an isomorphism)
simple object $L(M)$ in $\mathcal{F}$. For $i\in I$ and
$a\in\CC^\times$, we set $V_i(a) :=
L\left(Y_{i,a}\right)$; this is the $i$th fundamental representation
of $\U_q(\ghat)$.

For example, the $q$-character of the fundamental representation
$V_{1,a} = L(Y_{1,a})$ of $\U_q(\widehat{sl}_2)$ is
$$\chi_q(L(Y_{1,a})) = Y_{1,a} + Y_{1,aq^2}^{-1}.$$
It was proved in  \cite{Fre, Fre2} that for a simple module $L(m)$ we have
$$\chi_q(L(m))\in m\ZZ[A_{i,a}^{-1}]_{i\in I, a\in\CC^\times}.$$

\subsection{Screening operators and the ring of $W$-invariants}

In this subsection we link the operators $\Theta_i$
  generating the action of the Weyl group to the screening operators
  $S_i$ introduced in \cite[Section 7]{Fre}. Recall that in Section
  \ref{Weylaction} we denoted by $\Yim^w$ the image of $\Yim$ in the
  completion $\wt{\mc Y}^w$ and in the direct sum $\Pi$ of these
  completions for all $w \in W$. Then in Definition \ref{defTh} we
  define the ring homomorphisms $\Theta^w_i: \mathcal{Y}^w \rightarrow
  \wt{\mc Y}^{ws_i}$ using formula \eqref{Thetai1}. Now
    we define the ring homomorphisms
$$
{\mb \Theta}_i: \Yim \to \wt{\mc Y}^e, \qquad i \in I,
$$
by ${\mb \Theta}_i(Y_{j,a}^{\pm 1}) = Y_{j,a}^{\pm 1}$ if $j\neq
i$ and
\begin{equation}\label{Thetai2}
  {\mb \Theta}_i(Y_{i,a}) = Y_{i,a}A_{i,aq_i^{-1}}^{-1} \;
  \frac{\Sigma^e_{i,aq_i^{-3}}}{\Sigma^e_{i,aq_i^{-1}}}.
\end{equation}
 The connection between ${\mb \Theta}_i$ and $\Theta_i$ is
  as follows: ${\mb \Theta}_i$ is the composition of the isomorphism
  $\Yim \simeq \Yim^{s_i}$ and the homomorphism $\Theta_i^{s_i}:
  \Yim^{s_i} \to \wt{\mc Y}^e$ from Definition \ref{defTh}.

Now let $h$ be a formal variable and set
$$\wt{\Sigma}_{i,a} := h \Sigma^e_{i,a}\text{ for $i\in I$ and
  $a\in\mathbb{C}^\times$}.$$
We then have
$${\mb \Theta}_i(Y_{i,a}) = Y_{i,a}A_{i,aq_i^{-1}}^{-1}
\frac{\Sigma^e_{i,aq_i^{-3}}}{\Sigma^e_{i,aq_i^{-1}}}
= Y_{i,a} - h\frac{Y_{i,a}}{\wt{\Sigma}_{i,aq_i^{-1}}},$$
$${\mb \Theta}_i(Y_{i,a}^{-1}) = Y_{i,a}^{-1}A_{i,aq_i^{-1}}
\frac{\Sigma^e_{i,aq_i^{-1}}}{\Sigma^e_{i,aq_i^{-3}}} =
Y_{i,a}^{-1} + h
\frac{Y_{i,a}^{-1}A_{i,aq_i^{-1}}}{\wt{\Sigma}_{i,aq_i^{-3}}}.$$
Let us also set
$$
\mathcal{Y}_{i,h} :=  \mathcal{Y}[\wt{\Sigma}_{i,a}^{-1},
  h]_{a\in\mathbb{C}^\times}.
$$
Then the above formulas give rise to a ring homomorphism
$${\mb \Theta}_{i,h} : \mathcal{Y}\rightarrow \mathcal{Y}_{i,h}.$$ 
Consider the limit $ h \rightarrow 0$. Note that the relation
$\wt{\Sigma}_{i,a} = h + A_{i,a}^{-1}\wt{\Sigma}_{i,aq_i^{-2}}$
then becomes
$\wt{\Sigma}_{i,a} = A_{i,a}^{-1}\wt{\Sigma}_{i,aq_i^{-2}}$. Therefore, 
\begin{equation}    \label{hdep}
  {\mb \Theta}_{i,h} = \text{Id} + h S_i + h^2(\ldots),
\end{equation}
where $S_i : \mathcal{Y}\rightarrow \mathcal{Y}_i$ is a derivation, and $\mathcal{Y}_i$ is the $\mathcal{Y}$-module generated by the 
$\wt{\Sigma}_{i,a}^{-1}$ with the relation $\wt{\Sigma}_{i,a}^{-1} = A_{i,a}\wt{\Sigma}_{i,aq_i^{-2}}^{-1}$ and 
$$S_i(Y_{j,a}) = -\delta_{i,j} Y_{i,a}\wt{\Sigma}_{i,aq_i^{-1}}^{-1}\text{ , }S_i(Y_{j,a}^{-1}) = \delta_{i,j} Y_{i,a}^{-1}\wt{\Sigma}_{i,aq_i^{-1}}^{-1}.$$
By denoting $S_{i,a} = - \wt{\Sigma}_{i,aq_i^{-1}}^{-1}$, we obtain
the following relation in the limit $h \to 0$:
$$
S_{i,aq_i^2} = A_{i,aq_i}S_{i,a},
$$
and so $S_i$ gets identified with the screening operator defined in
\cite{Fre}.
Thus, the screening operator $S_i$ appears in the limit of a
one-parameter deformation of the operator ${\mb
    \Theta}_i$ defined using formula \eqref{Thetai2}.

We will now use this to prove that the ring of $q$-characters
(equivalently, the Grothendieck ring of the category of
finite-dimensional representations of $\U_q(\ghat)$) is equal to the
subring of $W$-invariants in $\Yim$ embedded diagonally
into $\Pi$.

\begin{thm}    \label{invariants}
  The image of the $q$-character homomorphism $\chi_q$ in $\Yim$ is
  equal to the subring of invariants of the diagonal
    subspace $\Yim \subset \Pi$ under the action of $\Theta_i, i \in
  I$, i.e.
  $$\on{Im}(\chi_q) = \bigcap_{i\in I} \mathcal{Y}^{\Theta_i}.$$
  Equivalently, the Grothendieck ring of the category of finite-dimensional
representations of $\U_q(\ghat)$ is isomorphic to the subring of
invariants in $\Yim$ of the action of the Weyl group on $\Pi$.
\end{thm}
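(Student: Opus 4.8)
The plan is to bootstrap from the known result of \cite{Fre2} that $\on{Im}(\chi_q) = \bigcap_{i \in I} \ker S_i$ inside $\Yim$, using the relation between $\Theta_i$ and $S_i$ established above via the one-parameter deformation ${\mb \Theta}_{i,h} = \on{Id} + h S_i + h^2(\ldots)$. The first and easier inclusion is $\on{Im}(\chi_q) \subseteq \bigcap_i \Yim^{\Theta_i}$. For this I would take a fundamental representation $V_i(a)$ and more generally a simple module $L(m)$; since $\chi_q(L(m)) \in m\,\Z[A_{j,b}^{-1}]$ and the $\Theta_i$ are ring homomorphisms, it suffices to check invariance on $q$-characters of simple modules, or even just to invoke that $\on{Im}(\chi_q)$ is generated as a ring by the $\chi_q(V_i(a))$. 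The cleanest route: observe that for $P \in \Yim$, $\Theta_i(P) = P$ in $\Pi$ forces in particular $E_{s_i}\Theta_i(P) = P$, i.e. ${\mb\Theta}_i(P) = P$ after the identification $\Yim \simeq \Yim^{s_i}$; but ${\mb\Theta}_i(P) = P$ is a statement in $\Yim_{i,h}|_{h=1}$, and comparing with ${\mb\Theta}_{i,h}(P) = P + h S_i(P) + \ldots$ one deduces $S_i(P) = 0$ (and conversely, once one knows the higher-order terms are determined). So the two subrings $\bigcap_i \Yim^{\Theta_i}$ and $\bigcap_i \ker S_i$ should be shown to coincide, and then \cite{Fre2} finishes.

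Making the equivalence $\Theta_i\text{-invariance} \Leftrightarrow S_i\text{-invariance}$ precise is the technical heart. One direction: if $S_i(P) = 0$, I would show directly that $\Theta_i(P) = P$. The point is that the defining $q$-difference equation \eqref{firstqe1} for $\Sigma^w_{i,a}$ and the formula \eqref{Thetai1} are built so that $\Theta_i$ acts on the subalgebra $\Z[Y_{j,b}^{\pm 1}]_{j \neq i} \otimes \Z[Y_{i,a}A_{i,aq_i}^{-1}, \ldots]$-type combinations trivially; concretely, Proposition \ref{fixed} already exhibits a large supply of $\Theta_i$-fixed elements $T_{i,a}^{(k)}$, and the $q$-character of any simple $\U_q(\wh{\sw}_2^{(i)})$-string is a $\Z$-linear combination of such $T_{i,a}^{(k)}$'s (the $sl_2$ decomposition of a $q$-character along the $i$th direction). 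Since $S_i(P) = 0$ says exactly that $P$, as an element of $\Yim$, restricts along each $A_{i,b}$-direction to something in the span of these $sl_2$-strings, $\Theta_i(P) = P$ follows. For the converse, the deformation argument \eqref{hdep} gives $\Theta_i(P) = P \Rightarrow S_i(P) = 0$ as sketched above, taking the $O(h)$ term.

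The main obstacle I anticipate is the $sl_2$-reduction step: one must argue that $\Theta_i$-invariance of $P \in \Yim$ is genuinely a ``rank one'' condition — that it can be checked after decomposing $P$ with respect to the $i$th node only — and that on each such rank-one piece the condition matches the $S_i$-kernel condition. This requires knowing that the completion/solution $\Sigma^w_{i,a}$ interacts with monomials in the $Y_{j,b}$, $j\neq i$, purely as a spectator (which is clear from Definition \ref{defTh}) and that the formal-series manipulations are legitimate in $\wt{\mc Y}^w$ (guaranteed by Definition \ref{compw} and Lemma \ref{unique}). I would organize the argument as: (1) reduce to rank one, $\g = \sw_2$; (2) in rank one, describe $\bigcap \Yim^{\Theta_i}$ explicitly via Proposition \ref{fixed} and show it equals $\Z[T_{i,a}^{(1)}]_{a} = \on{Im}(\chi_q)$ for $\U_q(\wh{\sw}_2)$ (using that $\chi_q$ of the fundamental module is $T_{i,a}^{(1)}$ and these generate the $q$-character ring); (3) transport back via the rank-one reduction and cross-check with \cite{Fre2} to confirm the identification of both sides with $\bigcap_i \ker S_i$. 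The equality with the Grothendieck ring is then immediate from injectivity of $\chi_q$.
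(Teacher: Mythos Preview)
Your proposal is correct and follows essentially the same route as the paper: both inclusions are reduced to the identity $\on{Im}(\chi_q)=\bigcap_i\ker S_i$ from \cite{Fre2}, with the direction $\Theta_i(P)=P\Rightarrow S_i(P)=0$ coming from the linear term of the $h$-expansion ${\mb\Theta}_{i,h}=\on{Id}+hS_i+\ldots$, and the direction $\ker S_i\subset\Yim^{\Theta_i}$ coming from Proposition~\ref{fixed}. The only difference is that the paper streamlines the step you flag as the ``main obstacle'' (the $sl_2$-reduction): rather than reducing to rank one and describing $\Yim^{\Theta_i}$ there, it invokes \cite[Corollary 5.7]{Fre2} directly, which already says that every $P\in\on{Im}(\chi_q)$ is a polynomial in the $Y_{j,b}^{\pm1}$ ($j\neq i$) and the $Y_{i,a}(1+A_{i,aq_i}^{-1})=T_{i,a}^{(1)}$, whence $\Theta_i(P)=P$ by Proposition~\ref{fixed} applied only with $k=1$; your anticipated difficulty is thus absorbed into the cited result.
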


\begin{proof} First, we prove that the elements in $\on{Im}(\chi_q)$
  are invariant under $\Theta_i$ for each $i \in
  I$. 
Indeed, let $i\in I$ and
$L\in\on{Im}(\chi_q)$. According to \cite[Corollary 5.7]{Fre2}, $L$ is
a polynomial in
$$Y_{i,a}(1 + A_{i,aq_i}^{-1}) \quad \on{and} \quad Y_{j,a}^{\pm 1}, j\neq i$$
for various $a\in\mathbb{C}^\times$. We have
$\Theta_i(Y_{j,a}^{\pm 1}) = Y_{j,a}^{\pm 1}$ by definition,
and since $Y_{i,a}(1 + A_{i,aq_i}^{-1}) = T^{(1)}_{i,a}$ (see formula
\eqref{Tiak}), Proposition \ref{fixed} implies that
$$\Theta_i(Y_{i,a}(1 + A_{i,aq_i}^{-1})) = Y_{i,a}(1 +
A_{i,aq_i}^{-1}).$$
Thus, we find that $\on{Im}(\chi_q)$ is contained in $\bigcap_{i\in I}
\mathcal{Y}^{\Theta_i}$.

Conversely, let $L$ be an element in the intersection $\bigcap_{i\in
  I} \mathcal{Y}^{\Theta_i}$. Then ${\mb \Theta}_i(L) = L$, and
therefore ${\mb \Theta}_{i,h}(L) = L$, for all $i \in I$. But then
formula \eqref{hdep} implies that $S_i(L) = 0$ for all $i\in I$. Hence
$L \in \bigcap_{i\in I} \on{Ker}_{\Yim} S_i$. By \cite[Theorem 5.1]{Fre2},
$$
\on{Im}(\chi_q) = \bigcap_{i\in I} \on{Ker}_{\Yim} S_i.
$$
Hence $L\in\on{Im}(\chi_q)$. This completes the proof.
\end{proof}

\begin{rem}
  The subring of $W$-invariants in $\Pi$ is larger than
  the ring of $q$-characters. This is already so for
  $\g=sl_2$. Indeed, according to formula \eqref{oneminus} the element
  $\Sigma_{1,a} (1 - \Sigma_{1,a})$ of $\Pi$ is invariant under
  $\Theta_1$. But it is not in $\Yim$.\qed
\end{rem}

\section{Relation with other symmetries}\label{othsym}

In this section we study the relation of the action of the Weyl group
$W$ defined above with other known symmetries. In particular, we
introduce a $q$-analogue of a natural ring of rational functions
carrying an action of $W$.

\subsection{Ring of rational fractions}

 Recall that in Section \ref{compl} we defined completions
  $$
  (\mathbb{Z}[y_j^{\pm 1}]_{j\in I})^w, \quad w \in W
  $$
  of $\mathbb{Z}[y_j^{\pm 1}]_{j\in I}$. In Lemma
  \ref{Wfd} we extended the natural action of the Weyl group $W$ on
  ${\mathbb Z}[y_i^{\pm 1}]_{i \in I}$ (generated by the simple
  reflections $s_i, i \in I$, given by formula \eqref{si}) to the
  direct sum $\pi$ of these completions defined in equation
  \eqref{fembw} (into which ${\mathbb Z}[y_i^{\pm 1}]_{i \in I}$ is
  embedded diagonally).

Now we introduce a ring ${\mc R}$ which lies in-between
${\mathbb Z}[y_i^{\pm 1}]_{i \in I}$ and $\pi$, and is preserved by
the action of $W$. Namely, we set
\begin{equation}    \label{R}
{\mc R} := \Z[y_i^{\pm
    1},(1-a_\al^{-1})^{-1}]_{i\in I,\al\in \Delta_+}.
\end{equation}
We have a natural embedding
\begin{equation}    \label{embw}
{\mc R} \hookrightarrow (\mathbb{Z}[y_i^{\pm
    1}]_{i\in I})^w, \qquad w \in W
\end{equation}
obtained by replacing $(1-a_\al^{-1})^{-1}$ with
$$
\sum_{n \geq 0} a_\al^{-n} \quad \on{if} \al \in w^{-1}(\Delta_+)
$$
and with
$$
- \sum_{n > 0} a_\al^n \quad \on{if} \al \in w^{-1}(\Delta_-).
$$
 Thus, we have natural embeddings
\begin{equation}    \label{embedd}
{\mathbb Z}[y_i^{\pm 1}]_{i \in I} \subset {\mc R} \subset \pi,
\end{equation}
where the last embedding is diagonal.

\begin{example} For $\Glie$ of type $A_2$ with $I = \{i,j\}$, we have
  $\frac{1}{(1 - a_{\alpha_j}^{-1})(1-a_{\alpha_i+ \alpha_j}^{-1})}$
  in ${\mc R}$. This is $\varpi(\Sigma_{ji,a})$. Its expansion in
  $(\mathbb{Z}[y_i^{\pm 1}]_{i\in I})^w$ is
\begin{equation}\begin{split}
\begin{cases} \sum_{0\leq \beta\leq \alpha } a_j^{-\alpha}a_{i}^{-\beta}&\text{ if $w(\alpha_j)\in \Delta_+$ and $w(\alpha_i + \alpha_j)\in \Delta_+$,}
\\-\sum_{0\leq \beta, \alpha <\beta} a_{j}^{-\alpha}a_i^{-\beta}&\text{ if $w(\alpha_j)\in \Delta_-$ and $w(\alpha_i + \alpha_j)\in \Delta_+$,}
\\-\sum_{\beta< 0,\beta\leq \alpha } a_{j}^{-\alpha}a_{i}^{-\beta}&\text{ if $w(\alpha_j)\in \Delta_+$ and $w(\alpha_i + \alpha_j)\in \Delta_-$,}
\\ \sum_{\alpha < \beta < 0} a_{j}^{-\alpha}a_{i}^{-\beta}&\text{ if $w(\alpha_j)\in \Delta_-$ and $w(\alpha_i + \alpha_j)\in \Delta_-$.}
\end{cases}\end{split}
\end{equation}
These correspond to the different cases in (\ref{exp1}).
\end{example}

Let us extend the action of simple reflections $s_i, i \in I$, on
${\mathbb Z}[y_i^{\pm 1}]_{i \in I}$ given by formula \eqref{si}, to
${\mc R}$ as follows. For each $\al \in \Delta_+$, the element $1-a_\al$ is 
invertible in ${\mc R}$ with the inverse
$$
(1-a_\al)^{-1} = - a_\al^{-1} (1-a_\al^{-1})^{-1}.
$$
Since $\Delta$
is stable under $W$, the action of $W$ on ${\mathbb Z}[y_i^{\pm 1}]_{i
  \in I}$ extends to ${\mc R}$. Moreover, we obtain that the
embeddings \eqref{embedd} commute with the action of $W$.

 The ring ${\mc R}$ is meaningful from the point of view
  of representation theory of the Lie algebra $\g$ because the
  characters of $\g$-modules from the category ${\mc O}$ and its
  twists ${\mc O}_w, w \in W$, discussed in the introduction, may be
  viewed as elements of ${\mc R}$ expanded in a particular completion
  $({\mathbb Z}[y_i^{\pm 1}]_{i \in I})^w$ of ${\mathbb Z}[y_i^{\pm
      1}]_{i \in I}$ (that's because the character of every module
  from the category ${\mc O}$ is known to be a linear combination of
  the characters of Verma modules). Finite-dimensional $\g$-modules
  correspond to elements of the ring ${\mathbb Z}[y_i^{\pm 1}]_{i \in
    I}$ itself.

We will now define a $q$-analogue $\ol{\mc Y}\subset \Pi$ of 
${\mc R} \subset \pi$ and a $q$-analogue of the
embeddings \eqref{embw}. It will then follow from Lemma \ref{Pipi}
that the homomorphism $\ol{\mc Y} \to {\mc R}$ obtained by restriction
of the homomorphism $\varpi: \Pi \to \pi$ intertwines the actions of
$W$ on $\ol{\mc Y}$ and ${\mc R}$.

\subsection{Solutions of $q$-difference equations}

 Recall the group $\wt{G}^w$ from Definition \ref{Gw}.

\begin{lem}\label{unisol} Fix an integer $r\in\mathbb{Z}$ and let $G(a)$,
  $H(a)$, $F(a)$ be families of $\mathbb{C}^\times$-equivariant
  elements in $\wt{G}^w$ of respective weights $\alpha\in \Delta$,
  $0$ and $0$. Then there is a unique $\mathbb{C}^\times$-equivariant
  family $U(a)$ of elements in $\wt{\mathcal{Y}}^w$ such that
\begin{equation}\label{funcrel}
    F(a) U(a) = H(a) + G(a)
  U(aq^{-r})\text{ for any $a\in\mathbb{C}^\times$.}
\end{equation}
Moreover, we have $U(a)\in \wt{G}^w$ for any
$a\in\mathbb{C}^\times$.
\end{lem}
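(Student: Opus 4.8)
The plan is to prove existence and uniqueness separately, reducing both to the prior uniqueness result (Lemma \ref{unique}) and to a direct iterative construction, and then to verify the last assertion $U(a)\in\wt G^w$ by tracking leading terms.

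\textbf{Uniqueness.} Suppose $U_1(a)$ and $U_2(a)$ both satisfy \eqref{funcrel}. Then $\chi:=U_1(a)-U_2(a)$ satisfies $F(a)\chi = G(a)\tau_{q^{-r}}(\chi)$, i.e. $\chi = F(a)^{-1}G(a)\,\tau_{q^{-r}}(\chi)$. Here $F(a)^{-1}G(a)\in\wt G^w$ (since $\wt G^w$ is a group and $F(a)$ has weight $0$, $G(a)$ has weight $\alpha$) and its weight is $\alpha\in\Delta$. By Lemma \ref{unique}, $\chi=0$, so $U_1=U_2$. (I should be slightly careful that Lemma \ref{unique} is stated for a fixed element $\Psi$ rather than an $a$-dependent family, but applying it for each fixed $a$ gives $\chi=0$ pointwise, which is what we need; alternatively one runs the maximal-weight argument of the proof of Lemma \ref{unique} directly in the $\mathbb C^\times$-equivariant setting.)

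\textbf{Existence.} Write $F(a) = A_F\cdot S_F$, $G(a)=A_G\cdot S_G$, $H(a)=A_H\cdot S_H$ in the normal form of Definition \ref{Gw}, where $A_F, A_H\in\mathcal M$ have weight $0$ and $A_G\in\mathcal M$ has weight $\alpha$. The case $\alpha\in\Delta^-$ and the case $\alpha\in\Delta^+$ are handled symmetrically (in the second, rewrite \eqref{funcrel} as $G(a)^{-1}F(a)\,U(a) = G(a)^{-1}H(a) + U(aq^{-r})$, which swaps the roles of the two sides and replaces $\alpha$ by $-\alpha\in\Delta^-$; or equivalently iterate "downward" in $a$ instead of "upward"). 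So assume $\alpha\in\Delta^-$, i.e. $w(\alpha)<0$. Solve formally: $U(a) = F(a)^{-1}H(a) + F(a)^{-1}G(a)\,U(aq^{-r})$, and iterate to get the series
\begin{equation}\label{iterU}
U(a) = \sum_{k\geq 0}\Bigl(\prod_{0\leq \ell\leq k-1}F(aq^{-r\ell})^{-1}G(aq^{-r\ell})\Bigr)F(aq^{-rk})^{-1}H(aq^{-rk}).
\end{equation}
Each factor $F(aq^{-r\ell})^{-1}G(aq^{-r\ell})$ lies in $\wt G^w$ with weight $\alpha$, so the $k$-th summand has weight $k\alpha$ plus the weight of $F^{-1}H$, which is $0$; thus the $k$-th summand has weight $k\alpha$, and since $w(\alpha)<0$, the weights $w(k\alpha)=k\,w(\alpha)$ march off to $-\infty$ along a ray. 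Hence \eqref{iterU} converges in the topology of $\wt{\mathcal Y}^w$: for each $\omega\in P$ only finitely many $k$ contribute monomials of weight $\omega$, and within each summand the coefficient of any fixed weight is a finite sum because each factor individually has this finiteness property. One checks that the set of monomials appearing satisfies the support condition \eqref{omm} for $ws_i$-type reasons — more precisely, the ambient support is $\omega(A_F^{-1}A_H) - Q^+ - \bigl(\text{stuff with }w\text{-image}<0\bigr)$ translated by the ray $\NN\cdot\alpha$, which still has $w$-image contained in a set of the form \eqref{omm}. The $\mathbb C^\times$-equivariance $U(ba)=\tau_b(U(a))$ is immediate from \eqref{iterU} and the equivariance of $F,G,H$. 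That this $U(a)$ satisfies \eqref{funcrel} is a formal rearrangement: $F(a)U(a) = H(a) + G(a)\cdot(\text{the series }\eqref{iterU}\text{ with }a\text{ replaced by }aq^{-r})$.

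\textbf{The $\wt G^w$ assertion.} From \eqref{iterU}, the $k=0$ term is $F(a)^{-1}H(a) = A_F^{-1}A_H\cdot S_F^{-1}S_H$, and $S_F^{-1}S_H = \pm 1 + (\text{terms of weight }<0\text{ in the }w\text{-sense})$; since $A_F^{-1}A_H$ has weight $0$, this leading monomial $\pm A_F^{-1}A_H$ is the unique monomial of weight $0$ (relative to $\omega(A_F^{-1}A_H)$) and all $k\geq 1$ terms have weight $k\alpha$ with $w(k\alpha)<0$, hence contribute only to the "lower" part. Therefore $U(a) = (\pm A_F^{-1}A_H)\cdot\bigl(1 + \sum_{w(\omega(m))<0}c_m m\bigr)$, which is exactly the normal form of an element of $\wt G^w$ with weight $\omega(A_F^{-1}A_H)$. (The invertibility is automatic from this normal form, as in the remark preceding Lemma \ref{unique} and in the analogous statement for $\Sigma_{i,a}^w$.)

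\textbf{Main obstacle.} The only genuinely delicate point is checking that the support of the series \eqref{iterU} actually lies in $\wt{\mathcal Y}^w$, i.e. verifying condition \eqref{omm}: one must argue that translating the support of a single element of $\wt G^w$ by the entire ray $\NN\cdot\alpha$ with $w(\alpha)<0$ keeps it inside a union of "$\lambda_j$ minus $Q^+$" cones after applying $w$. This is where the hypothesis $\alpha\in\Delta$ (so that $\alpha$ is a root and $w(\alpha)$ is a root, in particular $w(\alpha)<0$ means $-w(\alpha)\in Q^+$) is used, and where one has to be a little careful that adding a negative root repeatedly does not escape finitely many cones — it does not, because $\NN\cdot\alpha \subset -Q^+$ when $w(\alpha)\in\Delta^-$ is read in the $w$-twisted sense, and a single cone $\lambda_j - Q^+$ is already stable under subtracting elements of $Q^+$. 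Everything else is bookkeeping with the normal form of Definition \ref{Gw} and the convergence criterion built into Definition \ref{compw}.
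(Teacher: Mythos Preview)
Your proof is correct and essentially parallel to the paper's, though organized differently. The paper first normalizes $F=H=1$ (by dividing through and substituting $U'=U/H$), then constructs the solution by a weight-by-weight recurrence after identifying the leading monomial via a case analysis on $\alpha\in\Delta_\pm$; you instead keep $F,H$ general and write down the explicit iterated series \eqref{iterU}, checking convergence directly. Both arguments case on the sign of the root and invoke Lemma~\ref{unique} for uniqueness in the same way. Your closed formula makes the solution more explicit, while the paper's normalization makes the $\wt G^w$ structure (the leading monomial) more transparent, especially in the $w(\alpha)\in\Delta^+$ case where the leading weight is $-\alpha$ rather than $0$.

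One notational slip: you write ``assume $\alpha\in\Delta^-$, i.e.\ $w(\alpha)<0$,'' but for general $w$ these are different conditions. The condition that actually makes your series converge in $\wt{\mathcal Y}^w$ is $w(\alpha)\in\Delta^-$, which is what you correctly use in the body of the argument and in the ``Main obstacle'' paragraph. Also, your reduction of the case $w(\alpha)\in\Delta^+$ by rewriting the equation does not literally put it back into the form of \eqref{funcrel} with weights $(-\alpha,0,0)$ (the new inhomogeneous term $-G(aq^r)^{-1}H(aq^r)$ has weight $-\alpha$, not $0$); but your alternative phrasing ``iterate downward in $a$'' is exactly right and yields the convergent series with leading weight $-\alpha$, matching the paper's conclusion in that case.
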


\begin{proof} We give the proof for
    $\wt{\mathcal{Y}}^e$; the proof for an arbitrary
    $\wt{\mathcal{Y}}^w$ is analogous. Since $H(a)/F(a)$ has weight
  $0$ and $G(a)/F(a)$ has weight $\alpha$, dividing by $F(a)$, we can
  assume without loss of generality that $F(a) = 1$. Next, we can
  assume without loss of generality that $H(a) = 1$. Indeed, the above
  equation with $F(a)=1$ is equivalent to the equation
$$U'(a) = 1 + G'(a) U'(aq^{-r})$$
where
$$U'(a) = U(a)/H(a), \qquad G'(a) = G(a) H(aq^{-r})/H(a),$$
and $G'(a)\in \wt{G}^e$ has the same weight as $G(a)$. So let us set
$F(a) = H(a) = 1$.

First, let us prove the existence and uniqueness of the solution $U(a)
\in \wt{G}$ of the equation
\begin{equation}\label{funcrel1}
U(a) = 1 + G(a) U(aq^{-r})
\end{equation}
provided that $G(a)$ is a $\C^\times$-equivariant family of weight
$\al \in \Delta$.

Let us factorize
$$U(a) = u(a) \tilde{U}(a)\text{ and }G(a) = g(a) \tilde{G}(a)$$ where
$u(a), g(a)$ are the highest weight monomials of $U(a)$, $G(a)$, respectively. 
Let $\gamma = \omega(u(a))$ be
  the weight of $u(a)$ and let us look at the terms of highest weight
  in (\ref{funcrel1}). It is clear from (\ref{funcrel1}) that the
  highest weight could only be $0$, $\gamma$ or $\gamma + \alpha$.

If $\alpha\in \Delta_+$, then the highest weight is $\gamma + \alpha >
\gamma$. Then we must have $\gamma + \alpha = 0$ and $0 = 1 + g(a)
u(aq^{-r})$, so we find that $\gamma = - \alpha$ and $u(a) = -
g(aq^{r})^{-1}$. Equation (\ref{funcrel1}) can therefore be written as
$$\tilde{G}(a)^{-1}(g(aq^{r})^{-1} \tilde{U}(a) +  1 - \tilde{G}(a) )
= \tilde{U}(aq^{-r}) - 1.$$

Denote by $(\tilde{U}(a))_{\lambda}$ the term of weight $\lambda$ in
the expansion of $\tilde{U}(a)$. The above equation gives rise to a
system of recurrence relations for $\lambda < 0$:
$$(\tilde{U}(aq^{-r}))_{\lambda} = \sum_{\mu \leq 0}
(\tilde{G}(a)^{-1})_{\mu} \left( g(aq^{r})^{-1} \; (\tilde{U}(a))_{\lambda
  - \mu + \alpha} + (1 - \tilde{G}(a) )_{\lambda - \mu} \right).$$
All the terms $(\tilde{U}(a))_\nu$ appearing on the right hand side
have weights $\nu = \lambda - \mu + \alpha > \lambda$. Hence these
equations determine uniquely all the terms
$(\tilde{U}(a))_{\lambda}, \lambda<0$, of $\tilde{U}(a)$.

If $\alpha\in \Delta_-$, then the highest weight must be $\gamma = 0$
and we must have $u(a) = 1$. Equation (\ref{funcrel1}) becomes
$$\tilde{U}(a) - 1  =  g(a) \tilde{G}(a) \tilde{U}(aq^{-r}).$$
Hence for $\lambda < 0$, we have recurrence relations
$$(\tilde{U}(a))_{\lambda} = \sum_{\mu \leq 0} g(a) (\tilde{G}(a))_\mu
(\tilde{U}(aq^{-r}))_{\lambda - \mu -\alpha}.$$ This again determines
uniquely all the terms $(\tilde{U}(a))_{\lambda}, \lambda<0$, of
$\tilde{U}(a)$.

It remains to prove that any solution of (\ref{funcrel1}) is
necessarily of this form. Suppose that there are two solutions $U(a)$
and $V(a)$ in $\wt{\mathcal{Y}}^w$. Then the difference $D(a) = U(a) -
V(a)$ satisfies $D(a) = G(a) D(aq^{-r})$. 
By Lemma \ref{unique}, we have $D(a) = 0$.
 This completes the proof.
\end{proof}

\subsection{The algebra $\ol{\mc Y}$}\label{comp2}
We define by induction a sequence of $\mathcal{Y}$-subalgebras of $\Pi$: 
$$\mathcal{Y}^0\hookrightarrow \mathcal{Y}^1\hookrightarrow
\mathcal{Y}^2\hookrightarrow \cdots$$
together with subgroups of invertible elements $G^m \subset (\mathcal{Y}^m)^\times$ such that

(1) the algebra $\mathcal{Y}^m$ is invariant under the automorphisms
$\tau_a$ ($a\in\mathbb{C}^\times$),

(2) for any $w\in W$, we have $E_w(G^m)\subset \wt{G}^w$.

\noindent We will also define
\begin{equation}    \label{adm}
(G^m)^{adm} := \{g\in G^m \mid \forall w\in W, \varpi_w(E_w(g)) = a_\al \;
  \on{for} \; \on{some} \; \al \in \Delta \},
\end{equation}
where $a_\al$ is given by formula \eqref{aal}.

First, we define $\mathcal{Y}^0$ (resp. $G^0$) as the image of
$\mathcal{Y}$ (resp. $\mathcal{M}$) in $\Pi$ under the diagonal
embedding.
Suppose now that $G^m\subset \mathcal{Y}^m$ have been defined.
Let $g\in (G^m)^{adm}$, $a\in\mathbb{C}^\times$,
$r\in\mathbb{Z}\setminus\{0\}$, and $w\in W$.
 By Lemma \ref{unisol}, there is a unique invertible
  $f^w_{g,r}(a)\in \wt{G}^w$ such that
 \begin{equation}\label{rell1}
  f^w_{g,r}(a) = 1 + E_w(\tau_a(g))
  f^w_{g,r}(aq^{-r}).\end{equation}
Hence $f_{g,r}(a) = (f^w_{g,r}(a))_{w\in W}$ is the unique invertible
solution in $\Pi$ of
\begin{equation}\label{rell1-1}
  f_{g,r}(a) = 1 + \tau_a(g)
  f_{g,r}(aq^{-r}).
\end{equation}
We define $\mathcal{Y}^{m+1}$ as the $\mathcal{Y}^m$-subalgebra of
$\Pi$ generated
by the elements $(f_{g,r}(a))^{\pm 1}$ obtained in this way, and
$G^{m+1}$ as the subgroup of invertible elements in
  $\mathcal{Y}^{m+1}$ generated by these $f_{g,r}(a)$ and by $G^m$.

\begin{example}\label{exsig} The
$q$-difference equation (\ref{firstqe}) is of the form
  \eqref{rell1-1}. Hence there is an element of $G^1$ representing its
  unique solution; namely, $\Sigma_{i,a}$ introduced in
    formula \eqref{totalSigma}.  Note that $\varpi_w(\Sigma^w_{i,a})$
    is the expansion of $(1-a_{\al_i}^{-1})^{-1}$ in negative powers
    of $a_{\al_i}$ if $w(\al_i) \in \Delta_+$, and
    $\varpi_w(\Sigma^w_{i,a})$ is the expansion of the same rational
    function $(1-a_{\al_i}^{-1})^{-1}$ in positive powers of
    $a_{\al_i}$ if $w(\al_i) \in \Delta_-$.
\end{example}

\begin{defi} The algebra $\ol{\mathcal{Y}}\subset \Pi$ (resp. the
  group $G\subset (\Pi)^\times$) is the union of the algebras
  $\mathcal{Y}^m$ (resp. of the groups $G^m$), $m\geq 0$ .
	\end{defi}
	
It follows from the definition that $\ol{\mathcal{Y}}$ contains the
diagonal subalgebra ${\mc Y} \subset \Pi$. Hence $\ol{\mathcal{Y}}$ is
a completion of $\mathcal{Y}$.

Thus, we have a sequence of embeddings
\begin{equation}    \label{embedd1}
{\mc Y} \subset \ol{\mc Y} \subset \Pi.
\end{equation}
Recall the homomorphism $\varpi: \Pi \to \pi$ introduced in Section
\ref{completion} and the ring ${\mc R}$ defined by formula \eqref{R}
which we have realized as a subring of $\pi$, see equation
\eqref{embedd}. The following result shows that the sequence
\eqref{embedd1} is mapped by $\varpi$ to the sequence \eqref{embedd}.

\begin{lem}\label{speo}
The restriction of the homomorphism
$\varpi: \Pi \to \pi$ to
$\ol{\mc Y}$ yields a homomorphism
\begin{equation}    \label{vart}
\ol{\mc Y} \to {\mc R}.
\end{equation}
\end{lem}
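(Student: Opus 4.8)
The plan is to proceed by induction on $m$, showing that $\varpi$ maps $\mathcal{Y}^m$ into $\mathcal{R}$ and $G^m$ into the multiplicative subgroup of $\mathcal{R}$ generated by the monomials $\prod_i y_i^{u_i}$ together with the elements $(1-a_\al^{-1})^{\pm 1}$, $\al \in \Delta_+$; and that $(G^m)^{adm}$ is mapped into the subset $\{a_\al \mid \al \in \Delta\}$ (this last point is essentially the definition \eqref{adm} of $(G^m)^{adm}$ via $\varpi_w$, but one should check it is consistent across the summands $w \in W$, which follows because $\varpi$ is a ring homomorphism and $\varpi_w(E_w(g)) = a_{\al}$ for the \emph{same} $\al$ for all $w$). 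The base case $m=0$ is immediate: $\mathcal{Y}^0$ is the diagonal image of $\mathcal{Y}$, which maps to the diagonal $\mathbb{Z}[y_i^{\pm 1}]_{i\in I} \subset \mathcal{R}$, and $G^0$ maps to the monomials.

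For the inductive step, the key computation is to identify $\varpi(f_{g,r}(a))$ for $g \in (G^m)^{adm}$. By the inductive hypothesis $\varpi(\tau_a(g))$ is, in each summand $w$, the expansion of $a_\al$ for some fixed $\al \in \Delta$ (note $\tau_a$ acts trivially after applying $\varpi_w$ since $\varpi_w(Y_{i,b}) = y_i$ independently of $b$, so the parameters $a, r$ disappear under $\varpi$). Applying $\varpi_w$ to the defining relation \eqref{rell1} and using that $\varpi_w$ is a continuous ring homomorphism, $\varpi_w(f^w_{g,r}(a))$ satisfies $u = 1 + a_\al \cdot u$, hence equals $(1-a_\al)^{-1}$ as an element of $(\mathbb{Z}[y_i^{\pm 1}]_{i\in I})^w$ — that is, the expansion of the rational function $(1-a_\al)^{-1}$ in the completion attached to $w$. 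If $\al \in \Delta_+$ this expansion is $\sum_{n\geq 0} a_\al^n$ when $\al \in w^{-1}(\Delta_-)$ (so that $w(\al) \in \Delta_-$ and the weights $n\al$ are "going down" after applying $w$) and $-\sum_{n>0}a_\al^{-n}$ otherwise; either way, by the definition of the embedding ${\mc R} \hookrightarrow (\mathbb{Z}[y_i^{\pm 1}]_{i\in I})^w$ in \eqref{embw} together with the identity $(1-a_\al)^{-1} = -a_\al^{-1}(1-a_\al^{-1})^{-1}$, this is precisely the image of the element $(1-a_\al)^{-1} \in \mathcal{R}$. The case $\al \in \Delta_-$ is handled by writing $a_\al = a_{-\al}^{-1}$ with $-\al \in \Delta_+$ and using $(1-a_\al)^{-1} = 1 - (1-a_{-\al}^{-1})^{-1}$, which again lies in $\mathcal{R}$. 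Therefore $\varpi(f_{g,r}(a)) \in \mathcal{R}$, and since $\mathcal{Y}^{m+1}$ is generated over $\mathcal{Y}^m$ by the $f_{g,r}(a)^{\pm 1}$, we get $\varpi(\mathcal{Y}^{m+1}) \subset \mathcal{R}$; the statement for $G^{m+1}$ follows similarly. Taking the union over $m$ gives the lemma.

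The main obstacle I anticipate is bookkeeping the sign/direction conventions: one must verify that the two expansions of $(1-a_\al)^{-1}$ produced by solving the recurrence \eqref{rell1} in $\wt{G}^w$ for the two possibilities $w(\al) \in \Delta^\pm$ match exactly the two cases in the definition \eqref{embw} of the embedding of $\mathcal{R}$ into $(\mathbb{Z}[y_i^{\pm 1}]_{i\in I})^w$ — including the subtlety that $\Sigma_{i,a}^w$ uses the \emph{inverse} monomials $A_{i,a}^{-1}$ (weight $-\al_i$ after... weight $\al_i$ for $A$, so $-\al_i$ for its inverse), and correspondingly $\varpi_w(\Sigma_{i,a}^w)$ is the expansion of $(1-a_{\al_i}^{-1})^{-1}$ rather than $(1-a_{\al_i})^{-1}$, exactly as recorded in Example \ref{exsig}. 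Once the $m=1$ case is checked carefully against Example \ref{exsig} and the $A_2$ example following \eqref{embedd}, the general inductive step is a formal consequence of $\varpi$ being a ring homomorphism intertwining the relevant $q$-difference equations with their $q\to 1$ counterparts.
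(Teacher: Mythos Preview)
Your proposal is correct and follows essentially the same approach as the paper: induction on $m$, with the key step being that $\varpi_w(E_w(f_{g,r}(a)))$ satisfies the degenerate equation $u = 1 + a_\al u$ and hence equals $(1-a_\al)^{-1} \in \mathcal{R}$. The paper's own proof is more terse (it simply asserts $\varpi_w(E_w(f_{g,r}(a))) = (1 - \varpi_w(E_w(g)))^{-1}$ and invokes condition \eqref{adm}), while you have spelled out the verification that the expansion directions match the embedding \eqref{embw}; this extra bookkeeping is accurate but not strictly required.
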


\begin{proof}
We need to show that for any $g\in \ol{\mathcal{Y}}$, there is an
element of ${\mc R}$ such that for any $w\in W$, $\varpi_w(E_w(g))$ is
the image of this element in $(\mathbb{Z}[y_i^{\pm 1}]_{i\in I})^w$
under the embedding \eqref{embw}.

We prove this statement for $g\in \mathcal{Y}^m$ by induction on
$m\geq 0$. This is clear on $\mathcal{Y}^0 = \mathcal{Y}$.  Suppose it
is true on $\mathcal{Y}^m$. Then it suffices to prove this statement
for the $f_{g,r}(a)$, where $g\in (G^m)^{adm}$,
$r\in\mathbb{Z}\setminus\{0\}$, $a\in\mathbb{C}^\times$. But for each
$w\in W$, we have
$$\varpi_w(E_w(f_{g,r}(a))) = (1 - \varpi_w(E_w(g)))^{-1},$$ which
is in ${\mc R}$ according to condition \eqref{adm}.  The statement of
the lemma follows.
\end{proof}

Next, we show that the action of $W$ on $\Pi$ preserves
$\ol{\mathcal{Y}}$ and the homomorphism \eqref{vart} commutes with the
action of $W$.

\begin{prop}     \label{pres}
  The operators $\Theta_i$ preserve the subalgebra
  $\ol{\mathcal{Y}}\subset \Pi$.
\end{prop}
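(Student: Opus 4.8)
The plan is to prove this by induction on $m$, showing that each $\Theta_i$ maps $\mathcal{Y}^m$ into $\ol{\mathcal{Y}}$, and more precisely that $\Theta_i$ maps each generator $f_{g,r}(a)$ of $\mathcal{Y}^{m+1}$ to an element expressible in terms of the building blocks $f_{g',r'}(a')$ from the construction. The base case $\mathcal{Y}^0 = \mathcal{Y}$ is immediate: $\Theta_i(Y_{j,a}^{\pm 1}) = Y_{j,a}^{\pm 1}$ for $j \neq i$, while $\Theta_i(Y_{i,a})$ and $\Theta_i(Y_{i,a}^{-1})$ are given by formula \eqref{Thetai1} in terms of $Y_{i,a}^{\pm 1}$, $A_{i,aq_i^{-1}}^{\pm 1}$, and $\Sigma_{i,b}^{\pm 1}$; since $\Sigma_{i,b} \in G^1 \subset \ol{\mathcal{Y}}$ by Example \ref{exsig}, we get $\Theta_i(\mathcal{Y}^0) \subset \mathcal{Y}^1 \subset \ol{\mathcal{Y}}$. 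I would simultaneously track an auxiliary statement: $\Theta_i$ maps $G^m$ into $G$, and moreover sends $(G^m)^{adm}$-elements to elements $g'$ with $\varpi_w(E_w(\Theta_i g))$ still of the form $a_\beta$ for some $\beta \in \Delta$ — this follows because, at the level of $\varpi$, $\Theta_i$ intertwines with the simple reflection $s_i$ acting on ${\mc R}$ (Lemma \ref{Pipi}), and $s_i$ permutes the elements $a_\al^{\pm 1}$, $\al \in \Delta$, up to sign conventions already handled in the definition of the $W$-action on ${\mc R}$.

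For the inductive step, suppose the claim holds on $\mathcal{Y}^m$ and on $G^m$. A generator of $\mathcal{Y}^{m+1}$ is $f_{g,r}(a)^{\pm 1}$ for some $g \in (G^m)^{adm}$, $r \neq 0$, $a \in \mathbb{C}^\times$, characterized as the unique invertible solution in $\Pi$ of the $q$-difference equation \eqref{rell1-1}: $f_{g,r}(a) = 1 + \tau_a(g)\, f_{g,r}(aq^{-r})$. Applying the automorphism $\Theta_i$ to this equation and using that $\Theta_i$ commutes with $\tau_a$ (both are defined compatibly with the $\mathbb{C}^\times$-action), we find that $\Theta_i(f_{g,r}(a))$ satisfies
$$\Theta_i(f_{g,r}(a)) = 1 + \tau_a(\Theta_i(g))\, \Theta_i(f_{g,r}(aq^{-r})).$$
By the inductive hypothesis, $\Theta_i(g) \in G$, say $\Theta_i(g) \in G^{m'}$ for some $m'$, and by the auxiliary statement its $\varpi_w$-images are of the form $a_\beta$, i.e. $\Theta_i(g) \in (G^{m'})^{adm}$. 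Hence, by the uniqueness clause in Lemma \ref{unisol} applied to equation \eqref{rell1-1} with $g$ replaced by $\Theta_i(g)$, we conclude $\Theta_i(f_{g,r}(a)) = f_{\Theta_i(g),r}(a)$, which by construction lies in $\mathcal{Y}^{m'+1} \subset \ol{\mathcal{Y}}$; its inverse lies in $G^{m'+1} \subset G$. This advances both the main induction and the auxiliary one (the admissibility of the image is preserved because $f_{\Theta_i(g),r}(a)$ has $\varpi_w$-image $(1 - a_\beta)^{-1}$-type expressions, invertible in ${\mc R}$, and we only need membership in $\ol{\mathcal{Y}}$ — closure of $G$ under $\Theta_i$ suffices for the next round).

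The main obstacle I anticipate is the bookkeeping needed to verify the auxiliary statement that $\Theta_i$ preserves admissibility of elements of $G^{adm}$ — i.e. that $\varpi_w(E_w(\Theta_i(g)))$ remains of the form $a_\beta$, $\beta \in \Delta$, rather than a product of several such or a more complicated monomial. This should reduce, via Lemma \ref{Pipi} and the formula $\varpi_{ws_i}\circ \Theta_i^w = s_i \circ \varpi_w$, to the fact that $s_i$ permutes $\{a_\al\}_{\al \in \Delta}$ (with $s_i(a_\al) = a_{s_i(\al)}$, using $\omega(A_{i,a}) = \alpha_i$ and the action on weights), which is true because $\Delta$ is $W$-stable. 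One subtlety is the sign: $f_{g,r}(a)$ may have leading coefficient $\pm 1$ depending on whether the relevant root is in $\Delta_+$ or $\Delta_-$ in the given completion, exactly as in the $\Sigma^+_{i,a}$ versus $\Sigma^-_{i,a}$ dichotomy of Definition \ref{exsigor}; but this is already absorbed into the definition of $\wt{G}^w$ (elements of the form $\pm 1 + \cdots$) and into the definition of the $W$-action on ${\mc R}$ via $(1-a_\al)^{-1} := -a_\al^{-1}(1-a_\al^{-1})^{-1}$, so no new case analysis beyond what has already been set up should be required. With these pieces in place, the union over $m$ gives $\Theta_i(\ol{\mathcal{Y}}) \subseteq \ol{\mathcal{Y}}$, and since $\Theta_i$ is an involution (Proposition \ref{invol}), $\Theta_i(\ol{\mathcal{Y}}) = \ol{\mathcal{Y}}$.
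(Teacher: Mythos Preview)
Your proposal is correct and follows essentially the same approach as the paper: induction on $m$, applying $\Theta_i$ to the defining $q$-difference equation \eqref{rell1-1}, using the admissibility of $\Theta_i(g)$ (which both you and the paper reduce to $s_i(\Delta)=\Delta$ via Lemma \ref{Pipi}), and then invoking the uniqueness from Lemma \ref{unisol} to conclude $\Theta_i(f_{g,r}(a)) = f_{\Theta_i(g),r}(a)$. The paper is slightly more efficient in bookkeeping, proving directly that $\Theta_i(G^m)\subset G^{m+1}$ rather than landing in some unspecified $G^{m'}$, but this makes no difference to the argument.
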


\begin{proof} It suffices to prove by induction on $m\geq 0$ that
  $\Theta_i(G^m)\subset G^{m+1}$.  By definition, this is true for $m
  = 0$. Suppose that $\Theta_i(G^m)\subset G^{m+1}$. Since
  $s_i(\Delta) = \Delta$, we have $\Theta_i((G^m)^{adm})\subset
  (G^{m+1})^{adm}$.

Let $g\in (G^m)^{adm}$, $r\neq 0$ and $a\in\mathbb{C}^\times$. Then $\Theta_i(g)\in (G^{m+1})^{adm}$ and 
there is a unique $f_{\Theta_i(g),r}(a)\in \mathcal{Y}^{m+2}$ such that 
$$f_{\Theta_i(g),r}(a) = 1 + \tau_a(\Theta_i(g))f_{\Theta_i(g),r}(aq^{-r}).$$
The uniqueness implies that $\Theta_i(f_{g,r}(a)) = f_{\Theta_i(g),r}(a)$ and 
moreover $f_{\Theta_i(g),r}(a) \in G^{m+2}$. This concludes the proof.
\end{proof}

\begin{prop} The homomorphism $\ol{\mc Y} \to {\mc R}$ given by \eqref{vart}
commutes with the action of $W$.
\end{prop}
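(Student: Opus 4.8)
The plan is to deduce this from the two facts already established: (i) Lemma \ref{speo}, which says that $\varpi$ restricts to a ring homomorphism $\ol{\mc Y} \to {\mc R}$, and (ii) Lemma \ref{Pipi}, which says that $\varpi: \Pi \to \pi$ intertwines the $W$-actions, together with Proposition \ref{pres}, which says that $\Theta_i$ preserves $\ol{\mc Y}$. Since $W$ is generated by the simple reflections $s_i$, it suffices to check that the diagram commutes for each $\Theta_i$, i.e. that $\varpi \circ \Theta_i = s_i \circ \varpi$ as maps $\ol{\mc Y} \to {\mc R}$.

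Here is how I would carry it out. First I would note that by Proposition \ref{pres}, for $g \in \ol{\mc Y}$ the element $\Theta_i(g)$ again lies in $\ol{\mc Y}$, so $\varpi(\Theta_i(g))$ makes sense and, by Lemma \ref{speo}, lies in ${\mc R} \subset \pi$. Similarly $s_i(\varpi(g))$ lies in ${\mc R}$ because we have already arranged (just before this proposition, using the element $(1-a_\al)^{-1} := -a_\al^{-1}(1-a_\al^{-1})^{-1}$) that the $W$-action on $\Z[y_i^{\pm 1}]_{i\in I}$ extends to ${\mc R}$. So both $\varpi \circ \Theta_i$ and $s_i \circ \varpi$ are well-defined ring homomorphisms $\ol{\mc Y} \to {\mc R}$. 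Now I would invoke Lemma \ref{Pipi}: as maps $\Pi \to \pi$ we have $\varpi \circ \Theta_i = s_i \circ \varpi$ (this is exactly formula \eqref{schar} summed over $w$, or rather its consequence for the $W$-action on $\Pi$ and $\pi$). Restricting both sides to the subalgebra $\ol{\mc Y} \subset \Pi$ and observing that the images land in ${\mc R} \subset \pi$ gives the desired identity $\varpi|_{\ol{\mc Y}} \circ \Theta_i = s_i \circ \varpi|_{\ol{\mc Y}}$. Since this holds for all generators $s_i$ of $W$, the homomorphism \eqref{vart} commutes with the full $W$-action.

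The only point requiring a little care is that the $W$-action on ${\mc R}$ is genuinely the restriction of the $W$-action on $\pi$ — that is, that the embedding ${\mc R} \hookrightarrow \pi$ of \eqref{embedd} is $W$-equivariant. But this was already recorded in the paragraph following \eqref{embedd}: "we obtain that the embeddings \eqref{embedd} commute with the action of $W$." Given that, there is nothing more to prove, since $\ol{\mc Y}$ is, by the Remark after its definition, a completion of ${\mc Y}$ on which the $\Theta_i$ act, and we only need the compatibility on this subalgebra, not on all of $\Pi$.

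I do not expect a real obstacle here; the proposition is essentially a formal corollary of Lemmas \ref{Pipi} and \ref{speo} and Proposition \ref{pres}. If anything, the mild subtlety is bookkeeping: making sure that "restricting $\varpi$" is compatible with "restricting the $W$-action" on both sides, i.e. that the square
$$
\begin{CD}
\ol{\mc Y} @>\Theta_i>> \ol{\mc Y} \\
@V\varpi VV @VV\varpi V \\
{\mc R} @>s_i>> {\mc R}
\end{CD}
$$
is obtained by restricting the corresponding square for $\Pi$ and $\pi$. This is immediate once one knows all four arrows are well-defined, which is precisely the content of the results cited above.

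\begin{proof}
Since the Weyl group $W$ is generated by the simple reflections $s_i$,
it is enough to show that the square
$$
\begin{CD}
\ol{\mc Y} @>\Theta_i>> \ol{\mc Y} \\
@V\varpi VV @VV\varpi V \\
{\mc R} @>s_i>> {\mc R}
\end{CD}
$$
commutes for each $i \in I$. All four arrows are well-defined ring
homomorphisms: the top arrow by Proposition \ref{pres}, the vertical
arrows by Lemma \ref{speo}, and the bottom arrow because the action of
$W$ on $\Z[y_j^{\pm 1}]_{j\in I}$ extends to ${\mc R}$ and the
embedding ${\mc R} \hookrightarrow \pi$ of \eqref{embedd} commutes
with this action.

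Now, as homomorphisms $\Pi \to \pi$ we have $\varpi \circ \Theta_i =
s_i \circ \varpi$ by Lemma \ref{Pipi} (this is formula
\eqref{schar}). Restricting both sides to the subalgebra $\ol{\mc Y}
\subset \Pi$ and using that $\Theta_i(\ol{\mc Y}) \subset \ol{\mc Y}$
(Proposition \ref{pres}) together with the fact that $\varpi$ maps
$\ol{\mc Y}$ into ${\mc R}$ (Lemma \ref{speo}) and that $s_i$
preserves ${\mc R}$, we conclude that the displayed square commutes.
Hence the homomorphism $\ol{\mc Y} \to {\mc R}$ given by \eqref{vart}
commutes with the action of $W$.
\end{proof}
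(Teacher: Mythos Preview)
Your proof is correct and follows essentially the same approach as the paper: both argue that the restriction $\ol{\mc Y}\to{\mc R}$ inherits $W$-equivariance from $\varpi:\Pi\to\pi$ by invoking Lemma~\ref{speo}, Lemma~\ref{Pipi}, and Proposition~\ref{pres}. Your version simply spells out the commuting square more explicitly than the paper's three-sentence proof.
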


\begin{proof}
We have shown in Lemma \ref{speo} that the homomorphism $\varpi: \Pi
\to \pi$ restricts to a homomorphism $\ol{\mathcal{Y}}\rightarrow {\mc
  R}$. The former commutes with the action of $W$ by Lemma
\ref{Pipi}. Proposition \ref{pres} shows that $W$ preserves
$\ol{\mathcal{Y}} \subset \Pi$. Hence the result.
\end{proof}

This is consistent with the fact \cite{Fre} that $\varpi$
sends the subring of $q$-characters in $\Yim \subset \ol{\Yim}$ to the
subring of characters of finite-dimensional representations of $\g$ in
$\mathbb{Z}[y_i^{\pm 1}]_{i\in I} \subset \mathcal{R}$
and gives a precise sense in which the $W$-action on $\ol{\Yim}$
generated by the $\Theta_i$'s is a ``$q$-deformation'' of the
$W$-action on ${\mc R}$.

\begin{rem}    \label{sl2ex}
  Consider, for example, the homomorphism $\varpi:
  \ol{\mathcal{Y}} \rightarrow {\mc R}$ in the case of $sl_2$. Then we
  have $\varpi(\Sigma_{1,a}) = (1 - y^{-2})^{-1}$ and
$$\varpi(\Theta_1(\Sigma_{1,a})) = \varpi(1 -
\Sigma_{1,a}) = 1 - \frac{1}{1 - y^{-2}} = \frac{1}{1 - y^2} =
s_1\left(\frac{1}{1 - y^{-2}}\right) = s_1(\varpi(\Sigma_{1,a})).$$
\qed
\end{rem}

The action of the Weyl group on $\mathbb{Z}[y_i^{\pm 1}]_{i\in I}$ is
faithful. Hence we obtain the following.

\begin{cor} The $W$-action on $\Pi$ is faithful.
\end{cor}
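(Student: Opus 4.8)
The plan is to transport the faithfulness of the standard $W$-action on $\mathbb{Z}[y_i^{\pm 1}]_{i\in I}$ up to $\Pi$ along the $W$-equivariant homomorphism $\varpi : \Pi \to \pi$ provided by Lemma \ref{Pipi}. The key point is that, although the diagonal copy of $\mathcal{Y}$ inside $\Pi$ is not preserved by the $\Theta_i$, its image under $\varpi$ is preserved by the $W$-action on $\pi$ and the restricted action is exactly the standard one, which is faithful.

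First I would recall from Lemma \ref{Pipi} that $\varpi$ intertwines the $W$-action on $\Pi$ generated by the $\Theta_i$ with the $W$-action on $\pi$ of Lemma \ref{Wfd}. Next I would observe that $\varpi$ sends the diagonal copy of $\mathcal{Y}$ in $\Pi$ onto the diagonal copy of $\mathbb{Z}[y_i^{\pm 1}]_{i\in I}$ in $\pi$: this is immediate from $\varpi_w(Y_{i,a}) = y_i$ for all $w\in W$ and all $a\in\mathbb{C}^\times$. Then I would note that this diagonal copy of $\mathbb{Z}[y_i^{\pm 1}]_{i\in I}$ is stable under the $W$-action on $\pi$, and that the restricted action coincides with the standard action \eqref{si}; this follows from the description of $s_i$ on $\pi$ as $(s_i^w)_{w\in W}$ together with the fact that $w\mapsto ws_i$ is a bijection of $W$.

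Finally, suppose that $w\in W$ acts as the identity on $\Pi$. By the intertwining property, $\varpi(x) = \varpi(w\cdot x) = w\cdot\varpi(x)$ for every $x\in\Pi$, so $w$ acts as the identity on $\varpi(\Pi)\subseteq\pi$; in particular it fixes every element of the diagonal copy of $\mathbb{Z}[y_i^{\pm 1}]_{i\in I}$, which it preserves. By the previous step this forces $w$ to act as the identity on $\mathbb{Z}[y_i^{\pm 1}]_{i\in I}$ under the standard $W$-action, and since that action is faithful we conclude $w=e$. There is no real obstacle in this argument; the only point requiring care is the bookkeeping of the $W$-action on the summands of $\pi$, namely checking that the diagonal $\mathbb{Z}[y_i^{\pm 1}]_{i\in I}$ inside $\pi$ genuinely carries the standard faithful action rather than a permuted or twisted variant.
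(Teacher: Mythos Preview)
Your argument is correct and is essentially the same as the paper's: both deduce faithfulness from the $W$-equivariance of $\varpi:\Pi\to\pi$ (Lemma~\ref{Pipi}) together with the faithfulness of the standard $W$-action on the diagonal copy of $\mathbb{Z}[y_i^{\pm 1}]_{i\in I}$ in $\pi$. The paper compresses this into a single sentence, while you spell out the bookkeeping showing that the diagonal subring is $W$-stable with the standard action.
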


\subsection{Braid group action}\label{bga}

Let $\overline{\mathcal{M}}$ be the subgroup of invertible elements of
the ring $\ol{\mathcal{Y}}$ generated by the multiplicative group
$\mathcal{M}$ of monomials in $\Yim$ (see Section \ref{Laurent}) and
by $G^m, m \geq 0$ (see Section \ref{comp2}). By construction,
$\Theta_i$ defines an automorphism of the group
$\overline{\mathcal{M}}$.

Introduce the truncation homomorphism $\Lambda :
\overline{\mathcal{M}}\rightarrow \mathcal{M}$ which assigns to $P
\in\overline{\mathcal{M}}$ the unique element $\Lambda(P) \in
\mathcal{M}$ such that $E_e(P) = \Lambda(P) \cdot \ol{P}$, where $\ol{P}
\in \ZZ[[A_{j,b}^{-1}]]_{j\in I, b\in\mathbb{C}^\times}$ has constant
term $\pm 1$. Thus, one can think of $\Lambda(P)$ as the leading monomial
of $E_e(P)$. The following result is obtained by a straightforward
calculation.

\begin{prop} The restriction of $\Lambda \circ \Theta_i$ to
  $\mathcal{M}$ is the Chari operator $T_i$ from \cite{C} given by
  formula \eqref{Ti}.
\end{prop}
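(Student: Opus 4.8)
The plan is to unwind the definitions and check that, for each generator $m \in \mathcal{M}$, the leading monomial of $E_e(\Theta_i(m))$ agrees with Chari's $T_i \cdot m$ given by formula \eqref{Ti}. Since both $\Lambda \circ \Theta_i$ and $T_i$ are group homomorphisms on $\mathcal{M}$ (the former because $\Theta_i$ is a ring automorphism of $\Pi$ preserving $\ol{\mathcal{Y}}$ by Proposition \ref{pres}, and $E_e$, $\Lambda$ are multiplicative; the latter by \eqref{Ti}), it suffices to verify the equality on the generators $Y_{j,a}$ of $\mathcal{M}$. For $j \neq i$ this is immediate: $\Theta_i(Y_{j,a}) = Y_{j,a}$ and $T_i \cdot Y_{j,a} = Y_{j,a}$, and $E_e(Y_{j,a}) = Y_{j,a}$ has leading monomial $Y_{j,a}$.

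The remaining case is $j = i$. Here $\Theta_i(Y_{i,a}) = Y_{i,a} A_{i,aq_i^{-1}}^{-1}\,\frac{\Sigma^e_{i,aq_i^{-3}}}{\Sigma^e_{i,aq_i^{-1}}}$ by \eqref{Thetai1}, where on the $w = e$ component we use $\Sigma^e_{i,b} = \Sigma^+_{i,b} = 1 + A_{i,b}^{-1}(1 + \ldots)$ from \eqref{Sigmai1}. The key observation is that both $\Sigma^e_{i,aq_i^{-3}}$ and $\Sigma^e_{i,aq_i^{-1}}$ lie in $\ZZ[[A_{j,b}^{-1}]]_{j\in I, b}$ with constant term $1$, hence so does their ratio $\Sigma^e_{i,aq_i^{-3}}/\Sigma^e_{i,aq_i^{-1}}$ (an invertible element of the completed polynomial ring with leading term $1$). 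Therefore $E_e(\Theta_i(Y_{i,a})) = Y_{i,a} A_{i,aq_i^{-1}}^{-1} \cdot (1 + \text{higher order in } A^{-1})$, so its leading monomial in the sense of the truncation $\Lambda$ is exactly $Y_{i,a} A_{i,aq_i^{-1}}^{-1}$. Comparing with \eqref{Ti}, which gives $T_i \cdot Y_{i,a} = Y_{i,a} A_{i,aq_i^{-1}}^{-1}$ (matching our conventions up to the stated replacement $q \leftrightarrow q^{-1}$), we get $\Lambda(\Theta_i(Y_{i,a})) = T_i \cdot Y_{i,a}$.

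The one point that needs a small amount of care — and is the only genuine obstacle — is to confirm that the $w = e$ components $\Sigma^e_{i,b}$ really do have constant term $1$ and that their ratio stays in $\ZZ[[A_{j,b}^{-1}]]$ with leading term $1$, so that the factor $\frac{\Sigma^e_{i,aq_i^{-3}}}{\Sigma^e_{i,aq_i^{-1}}}$ contributes nothing to the leading monomial; this is exactly the content of expansion \eqref{Sigmai1} together with the Remark after Lemma 2.2 identifying such series as lying in the completion $\ZZ[[A_{i,a}^{-1}]]$. Once this is in place, one checks multiplicativity: for a general monomial $m = \prod Y_{j,a}^{u_{j,a}}$, $\Theta_i(m) = \prod \Theta_i(Y_{j,a})^{u_{j,a}}$ and $E_e$ of this is the product of the $E_e(\Theta_i(Y_{j,a}))^{u_{j,a}}$, each of which is its leading monomial times a unit $1 + (\text{h.o.})$; the product of the leading monomials is then $\prod (T_i \cdot Y_{j,a})^{u_{j,a}} = T_i \cdot m$, and the product of the unit factors again has leading term $1$. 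Hence $\Lambda(\Theta_i(m)) = T_i \cdot m$ for all $m \in \mathcal{M}$, which is the claim.
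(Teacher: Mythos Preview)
Your argument is correct and is exactly the ``straightforward calculation'' the paper alludes to: reduce to generators by multiplicativity, note $\Theta_i$ fixes $Y_{j,a}$ for $j\neq i$, and for $j=i$ observe that the $e$-component of $\Theta_i(Y_{i,a})$ is $Y_{i,a}A_{i,aq_i^{-1}}^{-1}$ times the ratio $\Sigma^e_{i,aq_i^{-3}}/\Sigma^e_{i,aq_i^{-1}}\in 1+\ZZ[[A_{k,b}^{-1}]]_{>0}$, whence the truncation picks off $Y_{i,a}A_{i,aq_i^{-1}}^{-1}=T_i\cdot Y_{i,a}$. One tiny wording fix: the relevant $e$-component comes from $\Theta_i^{s_i}:\mathcal{Y}^{s_i}\to\wt{\mathcal{Y}}^e$ (since $\Theta_i$ sends the $w$-summand to the $ws_i$-summand), which is why $\Sigma^e$ and not $\Sigma^{s_i}$ appears; your formula is correct, only the phrase ``on the $w=e$ component'' is slightly misleading.
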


\begin{rem}\label{crem} (i) More precisely, we obtain Chari's operators if we
  replace $q$ with $q^{-1}$.

(ii) It was shown in \cite{C} (see also \cite{BP}) that the operators
  $T_i$ generate an action of the braid group associated to $\g$, but
  not the Weyl group. Indeed, $T_i$ has
  infinite order. From the point of view of the above proposition, the
  reason is that $\Lambda$ and $\Theta_i$ do not commute.

(iii) In the $sl_2$ case, we have proved that $\chi_q(L(Y_{1,a}))$ is
  invariant under $\Theta_1$. But it is clearly not invariant under
  $T_1$ (moreover, $T_1(\chi_q(L))$ is not even in the image of
  $\chi_q$). In fact, it is easy to see that the subring of invariants
  of $T_i$ in $\Yim$ is trivial, i.e. is equal to $\Z$ (consists of
  the constant elements of $\Yim$).

(iv) It is proved in \cite{C} that if $w = s_{i_1}\cdots s_{i_N}$ is a
  reduced expression for $w\in W$ and $m'$ the lowest weight monomial 
	of a simple module $L(m)$, then $T_{i_1}\cdots T_{i_N}(m')$ is
  a monomial occurring in $\chi_q(L(m))$ with multiplicity $1$ (this is stated
	in \cite{C} with $m$ instead of $m'$ as $q^{-1}$ is used instead of $q$ in the definition of $T_i$). In
  fact, this exhausts all monomials in $\chi_q(L(m))$ of extremal
  weights (but in general there are many other monomials in
  $\chi_q(L(m))$).

(v) In a different setting, a possible relation between a Weyl group action and the Chari braid group action is discussed in \cite[Remark 4.16]{In} via tropicalization.  This could be related to the restriction of $\Lambda \circ \Theta_i$ in our Proposition.

(vi) We want to mention that in \cite{KKOP} an action of
  the braid group is constructed on a quantized Grothendieck ring of a
  certain category of finite-dimensional representations of
  $\U_q(\ghat)$. However, as far as we can see, it is unrelated to the
  actions we consider in this paper.
\end{rem}

\section{Expansions}\label{altpf}

 In this section we present explicit formulas for some elements
  of $\Pi$ related to the braid relations for Lie algebras of
  rank $2$. These formulas are not used in this paper, but they can be
  used to give a purely combinatorial proof of the
  braid relations, and we have also used them in \cite{FH3}.

\subsection{Type $A_2$}    \label{a2pf}
Here are explicit formulas for the
$w$-components $E_w(\Sigma_{ji,a}), w \in W$, of the solution of equation
(\ref{qdiffs1}) (which we have used in the alternative
  proof of the braid relations in the simply-laced case given in
  Section \ref{sltb}) in terms of the monomials $V_{k,a}^{(\alpha)}$
introduced in Definition \ref{TV}:
\begin{equation}\label{exp1}\begin{split}E_w(\Sigma_{ji,a}) 
= \begin{cases} \sum_{0\leq \beta\leq \alpha } V_{j,aq^{-1}}^{(\alpha)}V_{i,a}^{(\beta)}&\text{ if $w(\alpha_j)\in \Delta_+$ and $w(\alpha_i + \alpha_j)\in \Delta_+$,}
\\-\sum_{0\leq \beta, \alpha <\beta}
V_{j,aq^{-1}}^{(\alpha)}V_{i,a}^{(\beta)}&\text{ if $w(\alpha_j)\in \Delta_-$ and $w(\alpha_i + \alpha_j)\in \Delta_+$,}
\\-\sum_{\beta< 0,\beta\leq \alpha } V_{j,aq^{-1}}^{(\alpha)}V_{i,a}^{(\beta)}&\text{ if $w(\alpha_j)\in \Delta_+$ and $w(\alpha_i + \alpha_j)\in \Delta_-$,}
\\ \sum_{\alpha < \beta < 0} V_{j,aq^{-1}}^{(\alpha)}V_{i,a}^{(\beta)}&\text{ if $w(\alpha_j)\in \Delta_-$ and $w(\alpha_i + \alpha_j)\in \Delta_-$.}
\end{cases}\end{split}
\end{equation}
To prove the first formula, write $E_w(\Sigma_{ji,a})$ as a sum of
terms of increasing degrees in $A_{i,aq^{-2m}}^{-1}$ and compute the
corresponding coefficients by induction using the $q$-difference
equation (\ref{qdiffs1}).
One checks the other formulas in a similar way.

We can deduce formula \eqref{Sigmaji} directly from these expansions and hence obtain an alternative proof of the braid
  relations in the simply-laced case.

\subsection{Type $B_2$}\label{b2pf} Let us suppose that $C_{i,j} = -1$, $C_{j,i} =
-2$, $d_i = 2$ and $d_j = 1$. As in Section \ref{a2pf}, one
finds, using the relevant $q$-differences equations, the following
explicit formulas:
\begin{equation}\label{exp2}E_e(\Sigma_{ij,a}) = \sum_{0\leq \beta\leq
    \text{Min}(2\alpha,2\alpha'+1)}
  V_{i,aq^{-2}}^{(\alpha)}V_{i,aq^{-4}}^{(\alpha')}V_{j,a}^{(\beta)},\end{equation}
\begin{equation}\label{exp5}E_e(\Sigma_{iji,a}) = \sum_{0\leq \gamma\leq \beta/2 \leq
    \alpha}
  V_{i,aq^{-2}}^{(\alpha)}V_{j,a}^{(\beta)}V_{i,a}^{(\gamma)},
\end{equation}
\begin{equation}\label{exp3}E_e(\Sigma_{ji,a})  = \sum_{0\leq 2\beta\leq \alpha} V_{j,a}^{(\alpha)}V_{i,a}^{(\beta)},\end{equation}
\begin{multline}\label{exp4}E_e(\Sigma_{jij,a}) =
  \sum_{0\leq \beta\leq \alpha/2, 0\leq \beta'\leq (\alpha+1)/2, 0\leq
    \gamma\leq \text{Min}(1 + 2\beta, 2\beta')}
  V_{j,aq^{-4}}^{(\alpha)}V_{i,aq^{-4}}^{(\beta)}V_{i,aq^{-2}}^{(\beta')}V_{j,a}^{(\gamma)}.
\end{multline}

\subsection{Type $G_2$}
Let us suppose that $C_{i,j} = -1$, $C_{j,i} = -3$, $d_i = 3$ and $d_j
= 1$. We have 
\begin{equation}\label{gexpji}E_e(\Sigma_{ji,a}) = \sum_{0\leq 3\epsilon\leq \delta} V_{j,aq}^{(\delta)}V_{i,a}^{(\epsilon)},\end{equation}
\begin{equation}\label{gexpij}E_e(\Sigma_{ij,a}) = \sum_{0\leq \epsilon\leq \text{Min}(3\delta_1, 3\delta_2 + 1 , 3\delta_3 + 2)} V_{i,aq^{-3}}^{(\delta_1)}V_{i,aq^{-5}}^{(\delta_2)}
V_{i,aq^{-7}}^{(\delta_3)}V_{j,a}^{(\epsilon)}.\end{equation}
 By a similar argument, we obtain the following expansions
  (with appropriate conditions on the domains of summation):
$$E_e(\Sigma_{iji,a}) = \sum V_{i,aq^{-2}}^{(\gamma)}V_{i,aq^{-4}}^{(\gamma')}V_{j,aq}^{(\delta)}V_{i,a}^{(\epsilon)},$$
$$E_e(\Sigma_{jiji,a}) = \sum V_{j,aq^{-3}}^{(\beta)}V_{i,aq^{-2}}^{(\gamma)}V_{i,aq^{-4}}^{(\gamma')}V_{j,aq}^{(\delta)}V_{i,a}^{(\epsilon)},$$
$$E_e(\Sigma_{ijiji,a}) = \sum V_{i,aq^{-6}}^{(\alpha)}V_{j,aq^{-3}}^{(\beta)}V_{i,aq^{-2}}^{(\gamma)}V_{i,aq^{-4}}^{(\gamma')}V_{j,aq}^{(\delta)}V_{i,a}^{(\epsilon)},$$
$$E_e(\Sigma_{jij,a}) = \sum V_{j,aq^{-4}}^{(\gamma_1)}V_{j,aq^{-6}}^{(\gamma_2)}V_{i,aq^{-3}}^{(\delta_1)}V_{i,aq^{-5}}^{(\delta_2)}V_{i,aq^{-7}}^{(\delta_3)}V_{j,a}^{(\epsilon)},$$
$$E_e(\Sigma_{ijij,a}) = \sum V_{i,aq^{-7}}^{(\beta_1)}V_{i,aq^{-9}}^{(\beta_2)}V_{i,aq^{-11}}^{(\beta_3)}V_{j,aq^{-4}}^{(\gamma_1)}V_{j,aq^{-6}}^{(\gamma_2)}
V_{i,aq^{-3}}^{(\delta_1)}V_{i,aq^{-5}}^{(\delta_2)}V_{i,aq^{-7}}^{(\delta_3)}V_{j,a}^{(\epsilon)},$$
$$E_e(\Sigma_{jijij,a}) = \sum V_{j,aq^{-10}}^{(\alpha)}V_{i,aq^{-7}}^{(\beta_1)}V_{i,aq^{-9}}^{(\beta_2)}V_{i,aq^{-11}}^{(\beta_3)}V_{j,aq^{-4}}^{(\gamma_1)}
V_{j,aq^{-6}}^{(\gamma_2)}V_{i,aq^{-3}}^{(\delta_1)}V_{i,aq^{-5}}^{(\delta_2)}V_{i,aq^{-7}}^{(\delta_3)}V_{j,a}^{(\epsilon)}.$$

 Using the above expansions, one can give a purely
  combinatorial proof of the braid relations for $B_2$ and
  $G_2$. However, since we have already given a uniform proof of these
  relations in Section \ref{gtb}, we will omit the details.

\end{document}